\newcommand{\wuhao}{\fontsize{9pt}{\baselineskip}\selectfont}
\newcommand{\BracKern}{\kern-\nulldelimiterspace}
    \newcommand{\@Brac}[3]{
 \mathopen{\color{red}\left#1\vphantom{#2}\BracKern\right.}
   #2
   \mathclose{\color{red}\left.\BracKern\vphantom{#2}\right#3}
  }
\newcommand{\bracr}[1]{\@Brac{(}{#1}{)}}%
\newcommand{\bracs}[1]{\@Brac{[}{#1}{]}}%
\newcommand{\bracc}[1]{\@Brac{\{}{#1}{\}}}%
\newcommand{\bracsred}[1]{\@Brac{[}{#1}{]}}%
\newcommand{\bracrred}[1]{\@Brac{(}{#1}{)}}%
\newcommand{\upcite}[1]{\textsuperscript{\textsuperscript{\cite{#1}}}}
\newcommand{\be}{\begin{equation}}
\newcommand{\ee}{\end{equation}}
\newcommand{\bea}{\begin{eqnarray}}
\newcommand{\eea}{\end{eqnarray}}
\newcommand{\stief}{\mathcal{S}_{n,p}}
\newcommand{\mF}{\mathcal{F}}
\newcommand{\mL}{\mathcal{L}}
\newcommand{\mbR}{\mathbb{R}}
\newcommand{\mbC}{\mathbb{C}}
\newcommand{\mT}{\mathcal{T}}
\newcommand{\mA}{\mathcal{A}}
\newcommand{\mN}{\mathcal{N}}
\newcommand{\mR}{\mathcal{R}}
\newcommand{\wF}{\widetilde{F}}
\newcommand{\wR}{\widetilde{R}}
\newcommand{\wN}{\widetilde{N}}
\newcommand{\mD}{\mathcal{D}}
\newcommand{\Drho}{D_{\rho}}
\newcommand{\argmin}{\mathop{\mathrm{arg\,min}}}
\newcommand{\argmax}{\mathop{\mathrm{arg\,max}}}
\newcommand{\Drhok}{D_{\rho, k}}
\newcommand{\mP}{\mathcal{P}}
\newcommand{\mtr}{\mbox{tr}}
\newcommand{\reff}[1]{(\ref{#1})}
\newcommand{\nn}{\nonumber}
\newcommand{\Diag}{\text{Diag}}
\newcommand{\proj}{\mathcal{P}_{\mathcal{T}}}
\spnewtheorem{algorithm}{Algorithm}[section]{\bf}{\rm}
\spnewtheorem{assumption}{Assumption}[section]{\bf}{\rm}
\newcommand{\ba}{\begin{array}}
\newcommand{\ea}{\end{array}}
\newcommand{\nF}{\nabla \mathcal{F}}
\newcommand{\Rmn}[1]{\uppercase\expandafter{\romannumeral#1}}
\newcommand{\revmini}[1]{{\color{black}{#1}}}
\newcommand{\rev}[1]{{\color{black}{#1}}}
\newcommand{\rrev}[1]{{\color{black}{#1}}}
\newcommand{\brev}[1]{{\color{black}{#1}}}
\newcommand{\lastrev}[1]{{\color{black}{#1}}}
\renewcommand{\smartqed}{\hfill{\qed}}
\newcommand{\msF}{\mathsf{F}}
\newcommand{\msT}{\mathsf{T}}
\begin{document}
\title{A Framework of Constraint Preserving Update Schemes for Optimization on Stiefel Manifold \thanks{This work was partly supported by the Chinese NSF
grants (nos. 10831106 and 81211130105), the CAS grant (no. kjcx-yw-s7-03) and the China
National Funds for Distinguished Young Scientists (no. 11125107).}
}
\author{Bo Jiang \and Yu-Hong Dai}
\institute{
  B. Jiang \and Y.-H. Dai
\at LSEC, ICMSEC, Academy of Mathematics and Systems Sciences, Chinese Academy of Sciences, Beijing 100190, CHINA.
\and
B. Jiang
\\\email{jiangbo@lsec.cc.ac.cn}
\vskip5.23pt
Y.-H. Dai
\\\email{dyh@lsec.cc.ac.cn}
\\ Corresponding Author.
}
\date{Revised August 25, 2014}
\maketitle
\thispagestyle{empty}
\begin{abstract}
 This paper considers optimization problems on the Stiefel manifold $X^{\msT}X=I_p$, where $X\in \mathbb{R}^{n \times p}$ is the
 variable and $I_p$ is the $p$-by-$p$ identity matrix. A framework of constraint preserving update schemes is proposed by decomposing each feasible point into the range space  of $X$ and the null space of $X^{\msT}$. While this general framework can unify many existing schemes, a new update scheme with low complexity cost is also discovered.  Then we study a feasible Barzilai-Borwein-like method under the new update scheme. The global convergence of  the method is established with an adaptive nonmonotone line search. The numerical tests on the  nearest low-rank correlation matrix problem, the Kohn-Sham total energy minimization  and a specific problem from statistics demonstrate the efficiency of the new method. In particular, the new method performs \rev{remarkably well for the nearest low-rank correlation matrix problem in terms of  \rrev{speed} and solution quality} and is considerably competitive with the widely used SCF iteration for the Kohn-Sham total energy minimization.

\end{abstract}
\keywords{Stiefel manifold\and orthogonality constraint\and sphere constraint\and  range space\and null space\and Barzilai-Borwein-like method\and feasible\and global convergence\and adaptive nonmonotone line search\and  low-rank correlation matrix\and Kohn-Sham total energy minimization\and  heterogeneous quadratic functions}

\subclass{49Q99 \and 65K05 \and 90C30}


 \section{Introduction}
  In this paper, we consider general feasible methods for optimization on the Stiefel manifold,
\be \label{prob:Stiefel}
  \min_{X \in \mathbb{R}^{n \times p}} \, \mathcal{F}(X),\ \ \mbox{s.t.} \ \   X^{\msT}X=I_p,
\ee
where $I_p$ is the $p$-by-$p$ identity matrix and $\mathcal{F}(X)\colon\mbR^{n \times p} \rightarrow \mbR$ is a differentiable function.
The feasible set $\stief \coloneqq\{X\in \mathbb{R}^{n \times p} : X^{\msT}X=I_p\}$ is referred to as the Stiefel manifold, which was due to
Stiefel in 1935 \cite{stiefel1935richtungsfelder}.

Problem \reff{prob:Stiefel} captures many applications, for instance, nearest low-rank correlation matrix problem   \cite{grubisic2007efficient, pietersz2004rank, Rebonato1999most}, linear eigenvalue problem \cite{golub1996matrix, saad1992numerical},  Kohn-Sham \brev{total} energy minimization \cite{yang2009kssolv},  orthogonal Procrustes problem \cite{elden1999procrustes, schonemann1966generalized},
maximization of sums of heterogeneous quadratic functions from statistics \cite{bolla1998extrema, rapcsak2001minimization}, sparse  principal component analysis \cite{aspremont2007direct, journee2010generalized, zou2006sparse}, leakage interference minimization \cite{liu2011complexity, peters2009interference}, joint diagonalization(blind source separation) \cite{joho2002joint, theis2009soft} \rev{and determining a minimal set of localized orbitals from atomic chemistry \cite{borsdorf2012algorithm}}. For other applications, we refer interested readers to \cite{edelman1998geometry, wen2013feasible} and the references therein. In general, problem \reff{prob:Stiefel} is difficult to solve due to the nonconvexity of the orthogonality constraint. In fact, some of the above examples, including the maxcut problem  and the leakage interference minimization \cite{liu2011complexity}, are NP-hard.

With the wide applicability  and fundamental difficulty, problem \reff{prob:Stiefel} has attracted many researchers. Based on the geometric structure, Rapcs\'ak \cite{rapcsak2001minimization, rapcsak2002minimization} reformulated it   as a smooth nonlinear program by introducing a new  coordinate representation.  From the point of view of manifold, some authors proposed a variety of feasible algorithms to  solve problem \reff{prob:Stiefel}. These algorithms include steepest descent methods \cite{abrudan2008steepest, absil2008optimization, manton2002optimization, nishimori2005learning}, Barzilai-Borwein (BB) method \cite{wen2013feasible}, conjugate gradient methods \cite{abrudan2009conjugate, absil2008optimization,  edelman1998geometry}, trust region methods \cite{absil2008optimization, yang2007trust}, Newton methods \cite{absil2008optimization, edelman1998geometry}, quasi-Newton methods \cite{savas2010quasi} and subspace methods \cite{yang2006constrained}. Unlike the unconstrained case, it is not trivial to keep the whole iterations in the Stiefel manifold and the concept of retraction has played an important role (see \cite[Theorem 15]{absil2012projection} and \cite[Definition 4.1.1]{absil2008optimization} for a detailed description on \brev{retractions}). \rev{Simply speaking, the retraction defines an update scheme which preserves the orthogonality constraint.} The existing update schemes \rev{employed by the aforementioned methods are some specific choices of retractions. They can be classified} into two types: geodesic-like and projection-like update schemes.  Briefly speaking, geodesic-like update schemes preserve the constraint by moving a point along the geodesic or quasi-geodesic while projection-like update schemes do so by (approximately) projecting a point into the constraint. We will delve into the details of these update schemes in \S\ref{section:reviewupdateschems}.

In this paper, we \rev{firstly} develop a framework of constraint preserving update schemes based on a novel idea \lastrev{of} decomposing each feasible point into the range space of $X$ and the null space of $X^{\msT}$. This framework \lastrev{cannot} only unify most existing schemes including \rrev{a kind of geodesic}, gradient projection, Manton's projection, polar decomposition, QR factorization and Wen-Yin  update  schemes (they will be mentioned in \S\ref{section:reviewupdateschems}), but also leads to the discovery of a new scheme with low complexity cost. 

\rev{Secondly}, under the new update scheme, we look for a suitable descent feasible curve along which the objective function can achieve a certain decrease by taking a suitable stepsize. Then the original problem \brev{can be treated} as an unconstrained optimization problem. We consider to combine the efficient BB method \cite{barzilai1988two} with our new update scheme.
To ensure the global convergence, we adopt the adaptive nonmonotone line search \cite{dai2005projected}, leading to an  adaptive feasible Barzilai-Borwein-like (AFBB) method for problem (\ref{prob:Stiefel}). \rev{Note that certain feasible BB-like method with a different nonmonotone line search  was first studied in \cite{wen2013feasible}, but the convergence issue was not discussed there.} \rev{We prove} the global convergence of the AFBB method in the numerical sense under some mild assumptions. Although our update scheme is also a retraction, the convergence of retraction-based line search methods in \cite{absil2008optimization} cannot be applied to our methods.  \rev{To the best of our knowledge, this is the first global convergence result for feasible methods  with nonmonotone line search for optimization on the Stiefel manifold.}  \rev{Furthermore}, our convergence analysis \rev{can also be extended to feasible BB-like methods based on monotone or some other nonmonotone Armijo-type line search techniques.}

\rev{Thirdly, we extend the proposed  update scheme and algorithm to deal with optimization with \rrev{multiple generalized orthogonality constraints}:
\be \label{prob:MultiOrthoStiefel}
    \min_{X_1 \in \mbC^{n_1 \times p_1}, \ldots, X_q \in \mbC^{n_q \times p_q}}\,   \mathcal{F}(X_1, \ldots, X_q),
    \ \ \mbox{s.t.}\ \   X_1^*H_1X_1=K_1,\, \ldots,\, X_q^* H_q^{} X_q = K_q,
\ee
where $H_1\in \mbR^{n_1 \times n_1}, \ldots, H_q \in \mbR^{n_q \times n_q}$ are symmetric positive semidefinite matrices, and $K_1 \in \mbR^{p_1 \times p_1},$ $\ldots,$ $K_q \in \mbR^{p_q \times p_q}$ are symmetric positive definite matrices. Note that problem  \reff{prob:Stiefel} is a special case of problem \reff{prob:MultiOrthoStiefel}. \lastrev{See \cite{lai2014folding,  zhang2012alternating} for two applications of problem \reff{prob:MultiOrthoStiefel}. }}

\rev{Finally, to demonstrate the efficiency of the proposed method, we apply the new method  to a variety of problems. For the nearest low-rank correlation matrix problem, our new method performs remarkably well in terms of speed and solution quality. We also modify the new method to deal with the extra  fixed constraints for the nearest low-rank correlation matrix problem through the augmented Lagrangian function. The preliminary numerical results
show the potential of the new method to handle some more general constraints beyond the sphere constraints.
For the Kohn-Sham total energy minimization problem arising in electronic structure calculations, the new method is considerably competitive with the widely used SCF iteration.}

The rest of this paper is organized as follows. In \S2, we review the existing update schemes and give  the first-order  optimality condition of problem \reff{prob:Stiefel}.  In \S3, we introduce our framework of  constraint preserving update schemes and propose a new update scheme. Some properties of the new update scheme and comparisons with existing update schemes are also stated in this section.  We present the AFBB method in \S\ref{subsection:AFBB}, establish its global convergence in \S\ref{subsection:convergenceAFBB} and then discuss how to deal with the cumulative feasibility error during the iterations  in \S\ref{subsection:error}.  The  AFBB method is also extended to more general problems in \S\ref{section:extension}.  Some numerical tests on an extensive collection of problems are presented to demonstrate the efficiency of our AFBB method in \S\ref{section:numericaltests}. Finally, conclusions are drawn in the last section.

 {\it Notation:} For matrix $M \in \mbR^{n \times n}$, we define $\mbox{sym}(M)=(M+M^{\msT})/2$.
The maximal and minimal eigenvalues of $M$ are denoted by  $\lambda_{\max}(M)$ and  $\lambda_{\min}(M)$, respectively.
We use $\mbox{diag}(M)$ to stand for the vector formed by the diagonal entries of $M$. Meanwhile, we use $\Diag(\theta_1, \ldots, \theta_n)$ to represent the diagonal matrix whose diagonal entries are $\theta_1, \ldots, \theta_n$. The set of $n$-by-$n$ symmetric matrices is denoted by $\mathcal{S}^n$. For $S \in \mathcal{S}^n$, if $S$ is positive semidefinite (positive definite), we mark $S \succeq 0$ ($S \succ 0$).
The Euclidean inner product of two matrices $A, B \in \mbR^{m \times n}$ is defined as $\langle A, B\rangle=\mtr(A^{\msT}B)$, where $\mtr(\cdot)$ is the trace operator. \rev{We denote by $A_{(i)}$ the $i$-th column of $A$}. \rrev{The decomposition $A = \mbox{qr}(A) \mathrm{upp}(A)$ is the unique QR factorization with  $\mbox{qr}(A) \in \mbR^{n \times p}$ being a matrix with orthonormal columns and  $\mathrm{upp}(A) \in \mbR^{p \times p}$ an upper triangular matrix with  positive diagonal entries.}   The condition number of $A$ is defined as $\text{cond}(A) =\Big(\frac{\lambda_{\max}(A^{\msT}A)}{\lambda_{\min}(A^{\msT}A)}\Big)^{1/2}$. Denote by $e_i$ the $i$-th unit vector of an appropriate size. For $X \in \stief $, we define $P_X=I_n - \frac{1}{2}XX^{\msT}$. The gradient of $\mF$ with respect to $X$ is $G\coloneqq\mD \mF(X)=\Big(\frac{\partial \mF(X)}{\partial X_{ij}}\Big)$, whereas the gradient in the  tangent space is denoted by $\nabla \mF$.

\section{Preliminaries} \label{section:prelimanary}
\subsection{Overview of existing update schemes}\label{section:reviewupdateschems}
Given any tangent direction $D \in \mbR^{n \times p}$ satisfying $X^{\msT}D + D^{\msT}X=0$ with $X \in \stief$, we briefly review the geodesic-like and projection-like update schemes. Note that the parameter $\tau \geq 0$ in the following update schemes is some stepsize.

\emph{Geodesic-like update schemes}. In 1998, Edelman {\it et al.} \cite{edelman1998geometry} proposed a computable geodesic update scheme, in which the iterations lie in the curve defined by
\be \label{equ:update:geodesic}
\rrev{Y_{\rev{\mathrm{geo1}}}(\tau; X)  =\big[\ba{l}  X,  \  Q \ea \big]
\exp \left(\tau\left[ \begin{array}{cc}-X^{\msT}D & \ -R^{\msT}\\R & 0\end{array} \right] \right) \left[\begin{array}{c}I_p\\0\end{array}\right],}
    \ee
    \rrev{where $QR =  - (I_n-XX^{\msT})D$ is the unique QR factorization of $-(I_n - XX^{\msT})D$.} This strategy requires computing the exponential of a $2p$-by-$2p$ matrix and the QR factorization of an $n$-by-$p$ matrix at each iteration. Consequently, the flops will be high when $p \geq n/2$. Another geodesic approach is proposed by Abrudan {\it et al.}  \cite{abrudan2008steepest}. Given an $n$-by-$n$ skew-symmetric matrix $\brev{A_1}$,  they considered the curve $Y_{\brev{\mathrm{geo2}}}(\tau) = \brev{\exp}(-\tau \brev{A_1}) X.$
    Comparing with \reff{equ:update:geodesic}, this formula can efficiently deal with the case when $p\geq n/2$.  Nevertheless, it still requires great efforts to compute the exponential of an $n$-by-$n$ matrix.  To avoid computing exponentials of matrices, Nishimori and Akaho \cite{nishimori2005learning} proposed a kind of quasi-geodesic approach.  Given an $n$-by-$n$ skew-symmetric matrix  $\brev{A_2}$, a special case of their update schemes is the Cayley transformation scheme
  \be  \label{equ:update:cayley}
  Y_{\brev{\mathrm{qgc}}}(\tau; X) = \Big(I_n - \frac{\tau}{2} A_2\Big)^{-1} \Big(I_n + \frac{\tau}{2} A_2\Big)X.
  \ee
 The computation cost for \reff{equ:update:cayley} is \rev{$O(n^3)$. This is considerably high even for small $p$.}  In 2010, \rev{by some \rrev{clever} derivations, Wen and Yin \cite{wen2013feasible} developed a simple and efficient constraint preserving update scheme, known as the Crank-Nicholson-like update scheme.} \rev{This scheme is equivalent to the Cayley transformation update scheme}. \rev{Their update scheme is described as follows:}
  \be \label{equ:update:wenyin}
  Y_{\brev{\mathrm{wy}}}(\tau; X)=X-\tau U \Big(I_{2p}+\frac{\tau}{2}V^{\msT}U\Big)^{-1}V^{\msT}X,
  \ee
  where $U=[P_X D, \,X]$, $V=[X,\, -P_XD] \in \mbR^{n \times 2p}$. For convenience, we call it Wen-Yin update scheme throughout  this paper. Formula  \reff{equ:update:wenyin} has the lowest computation complexity per iteration among the existing geodesic-like approaches when $p  < n/2$. However, when $p \geq n/2$, the cost is still expensive. To deal with this case, a low-rank update scheme is explored in \cite{wen2013feasible}. \rev{They were also the first to combine the feasible update scheme with the nonmonotone curvilinear search for optimization with orthogonality constraints. }

 \emph{Projection-like update schemes}. In spite of the nonconvexity of the Stiefel manifold, it is possible to preserve the constraint by the projection.  The projection of a rank $p$ matrix $C \in \mbR^{n \times p}$ onto $\stief$ is defined as the unique solution of
   \be
     \min_{X \in \mathbb{R}^{n \times p}} \, \|X - C\|_{\msF}^{}, \ \ \mbox{s.t.} \ \  X^{\msT}X = I_p,
    \label{equ:projstiefel}
  \ee
  where $\|\cdot\|_{\msF}^{}$ is the Frobenius norm. For any symmetric positive definite matrix $B \in \mbR^{p \times p}$, denote by $B^{1/2}$ the unique \emph{square root} of $B$.  It is easy to see that the solution of \reff{equ:projstiefel} is  $\mP_{\stief}(C) = C(C^{\msT}C)^{-1/2}$. Then we can extend the gradient projection method for optimization with convex constraints for solving \reff{prob:Stiefel}, yielding the update scheme
  \be \label{equ:update:gp}
  Y_{\brev{\mathrm{gp}}}(\tau; X) = \mP_{\stief}(X - \tau G).
  \ee
 In fact, the famous power method \cite{golub1996matrix} for the extreme eigenvalue problem of symmetric matrix is a special case of this gradient projection update scheme.  Manton \cite{manton2002optimization} considered another different projection scheme
  \be
  Y_{\brev{\mathrm{mp}}}(\tau; X) = \mP_{\stief}(X - \tau D). \nn
  \ee
Absil {\it et al.} \cite{absil2008optimization} proposed the  polar decomposition
  \be \label{equ:update:polar}
  Y_{\brev{\mathrm{pd}}}(\tau; X) = (X -\tau D)(I_p + \tau^2 D^{\msT}D)^{-1/2}.
  \ee
  The polar decomposition is equivalent to  Manton's projection update scheme, but has lower complexity cost\footnote{\rev{In \cite{manton2002optimization}, Manton used the SVD decomposition of $X - \tau D$ to obtain the projection, and the cost is higher than that of the polar decomposition.}}. It is then mainly considered in this paper. It is worth noting that the QR factorization update scheme
     \be \label{equ:update:qr}
     Y_{\mathrm{qr}}(\tau; X) = \mbox{qr}(X - \tau D)
    \ee
  proposed in \cite{absil2008optimization} can be regarded as an approximation to the polar decomposition update scheme.
 \subsection{First-order optimality condition}
To begin with, we introduce some basic concepts related to the Stiefel manifold as in \cite{edelman1998geometry}. For any $X \in \stief$,  define the tangent space at $X$ as $\mT_X\coloneqq\{\Delta : X^{\msT}\Delta+\Delta^{\msT}X=0\}$.
There are two different metrics on $\mT_X$. The first one is the \emph{Euclidean metric} $d_e(\Delta,\Delta)=\langle \Delta, \Delta\rangle, $ which is independent of the point $X$. The second one is the \emph{canonical metric} $d_c(\Delta,\Delta)=\langle \Delta, P_X\Delta\rangle,$ which is related to $X$.  \rrev{Then  the projection of any $Z \in \mbR^{n \times p}$ onto the tangent space $\mT_X$ under the Euclidean or canonical metric is  $\proj(Z)= Z - X \mbox{sym}(X^{\msT}Z)$.}

The gradient $\nabla \mF \in \mT_X$ of a differential function $\mF(X): \mbR^{n \times p} \rightarrow \mbR$ on the Stiefel manifold is defined such that
\be \label{equ:partialStiefel}
  \langle G, \Delta \rangle =d_c(\nabla \mF, \Delta)\equiv \langle \nabla \mF , P_X \Delta\rangle
\ee
for all tangent vectors $\Delta$ at $X$. Solving (\ref{equ:partialStiefel}) for $\nabla \mF$ such that $X^{\msT}\nabla \mF$ is skew-symmetric yields
 $$\nabla \mF=G-XG^{\msT}X.$$ Notice that $\nabla \mF$ is not the projection of $G$ onto the tangent space at $X$. The latter should be $G-X\mbox{sym}(X^{\msT}G).$

We  now give the first-order optimality condition without proof. It is analogous to Lemma 2.1 in \cite{wen2013feasible}.
\begin{lemma}[First-order optimality condition]\label{lemma:optimality}
 Suppose $X$ is a local minimizer of problem (\ref{prob:Stiefel}). Then $X$ satisfies the first-order optimality condition
$$  \mathcal{D}_X\mL(X,\Lambda)=G - XG^{\msT} X =0;$$
{\it i.e.,} $\nF=0$, with the associated Lagrange multiplier $\Lambda=G^{\msT} X$. Besides, \rev{$\nabla \mF = 0$ if and only if}
$$  G - X\big(2\rho G^{\msT}X + (1-2\rho)X^{\msT}G\big) =0, \quad\mbox{for any $\rho >0$}. $$
\end{lemma}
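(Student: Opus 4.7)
The plan is to apply the standard KKT framework to the equality-constrained problem \reff{prob:Stiefel} and then rewrite the resulting stationarity condition in the intrinsic manifold form $\nF = 0$. First I would associate a symmetric multiplier $\Lambda \in \mathcal{S}^p$ to the $p(p+1)/2$ independent scalar constraints packaged in $X^{\msT}X - I_p = 0$ and form the Lagrangian
\[
\mL(X,\Lambda) = \mF(X) - \tfrac{1}{2}\langle \Lambda,\, X^{\msT}X - I_p\rangle,
\]
noting that the restriction to symmetric $\Lambda$ is harmless because any skew-symmetric part pairs against a symmetric quantity and contributes nothing.

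Before invoking KKT I would verify a constraint qualification: the differential of the map $X \mapsto X^{\msT}X - I_p$ at $X \in \stief$ is $\Delta \mapsto X^{\msT}\Delta + \Delta^{\msT}X$, and for any $S \in \mathcal{S}^p$ the choice $\Delta = \tfrac{1}{2}XS$ satisfies $X^{\msT}\Delta + \Delta^{\msT}X = S$, so the differential is surjective onto $\mathcal{S}^p$ and LICQ holds everywhere on $\stief$. A routine matrix-calculus computation then gives $\mD_X \mL(X,\Lambda) = G - X\Lambda$. Setting this to zero at a local minimizer and left-multiplying by $X^{\msT}$ produces $\Lambda = X^{\msT}G$; since $\Lambda$ is symmetric this forces $X^{\msT}G = G^{\msT}X$, so equivalently $\Lambda = G^{\msT}X$, and back-substitution yields $G = X\Lambda = XG^{\msT}X$, which is precisely $\nF = 0$.

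For the $\rho$-parameterized equivalence, the forward direction is immediate: $\nF = 0$ implies $X^{\msT}G = G^{\msT}X$, so the combination $2\rho G^{\msT}X + (1-2\rho)X^{\msT}G$ collapses to $G^{\msT}X$ and the identity reduces back to $\nF = 0$. For the converse, left-multiplying $G = X\big(2\rho G^{\msT}X + (1-2\rho)X^{\msT}G\big)$ by $X^{\msT}$ and using $X^{\msT}X = I_p$ yields $2\rho(X^{\msT}G - G^{\msT}X) = 0$; since $\rho > 0$ we recover $X^{\msT}G = G^{\msT}X$, and back-substitution gives $\nF = 0$.

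The main obstacle, if any, is not in the algebra but in justifying that classical KKT applies; once LICQ is in hand, every subsequent step is linear manipulation. An alternative path that sidesteps LICQ entirely would be to exploit the variational characterization \reff{equ:partialStiefel}: at a local minimizer $\langle G, \Delta\rangle = 0$ for every $\Delta \in \mT_X$, hence $d_c(\nF, \Delta) = 0$ for all such $\Delta$; since $\nF \in \mT_X$ and the canonical metric is positive definite on $\mT_X$, we conclude $\nF = 0$.
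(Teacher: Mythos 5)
Your proof is correct. Note that the paper states this lemma \emph{without} proof (it defers to the analogous Lemma 2.1 of Wen--Yin), so there is no in-paper argument to compare against; both of your routes are sound — the KKT derivation with symmetric multiplier, where surjectivity of $\Delta \mapsto X^{\msT}\Delta + \Delta^{\msT}X$ onto $\mathcal{S}^p$ gives the constraint qualification and symmetry of $\Lambda = X^{\msT}G$ forces $X^{\msT}G = G^{\msT}X$, and the shorter variational argument via \reff{equ:partialStiefel} together with positive definiteness of $P_X$ on $\mT_X$ — and the $\rho$-equivalence step (left-multiplying by $X^{\msT}$ to extract $2\rho(X^{\msT}G - G^{\msT}X)=0$ and back-substituting) is exactly the intended calculation.
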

\section{Constraint Preserving update schemes}\label{section:update}

For a feasible point $X \in \stief,$ denote $\mR_X = \{XR: R \in \mbR^{p \times p}\}$ and $\mN_X = \{W \in \mbR^{n \times p}: X^{\msT}W = 0\}$ to be the range space of $X$ and the null space of $X^{\msT}$, respectively. It is well known that the two spaces are orthogonal to each other and their sum forms the whole space $\mbR^{n\times p}$. As a result, any point in $\mbR^{n\times p}$ can uniquely be decomposed into the sum of two points, which belong to the two spaces, respectively. With this observation, we introduce our idea for a framework of constraint preserving update schemes for problem \reff{prob:Stiefel}.

Given a matrix $W$ in the null space $\mN_X$, consider the following curve
\be \label{equ:update1}
  Y(\tau; X) = X R(\tau) + W N(\tau),
\ee
where
$R(\tau), N(\tau) \in \mbR^{p \times p}$ and $\tau\ge 0$ is some parameter. In other words, this curve can be divided into two parts; {\it i.e.}, $XR(\tau)$  in the range space of $X$ and $W N(\tau)$ in the null space of $X^{\msT}.$
\revmini{To make the following analysis simple,  we assume that} $R(\tau) = \wR(\tau) Z(\tau)$ and $N(\tau) = \wN(\tau) Z(\tau)$, where $Z(\tau)$ is always invertible, the curve \reff{equ:update1} can be expressed by
\be
 Y(\tau; X) = \left(X \wR(\tau) + W \wN(\tau)\right) Z(\tau). \label{add1}
\ee

Our goal is to determine appropriate \revmini{$\wR(\tau)$, $\wN(\tau)$, $Z(\tau)$ and $W$}  such that the curve $Y(\tau; X)$ is still feasible; {\it i.e.}, $Y^{\msT}Y = I_p$ (notice that if there is no confusion,  we will write $Y(\tau; X)$ as $Y(\tau)$ or even simply $Y$, {\it etc.}).
\revmini{To do so, we need to investigate the fundamental relations which hold for $Y(\tau)$. This technique can be called as {\it method of undetermined coefficients}.}

\revmini{Firstly, a natural and necessary  condition is  that $Y^{\msT} Y = I_p$, which requires}
\be
Z(\tau)^{\msT}\left(\wR(\tau)^{\msT} \wR(\tau) + \wN(\tau)^{\msT} W^{\msT}W \wN(\tau)\right)Z(\tau) = I_p, \nn
\ee
or, equivalently,
 \be \label{equ:ztau2}
 Z(\tau)^{-\msT}Z(\tau)^{-1} = \wR(\tau)^{\msT} \wR(\tau) + \wN(\tau)^{\msT} W^{\msT}W \wN(\tau).
 \ee


 \revmini{Secondly, consider some initial conditions which $Y(\tau; X)$ should satisfy.}  As $Y(\tau; X)$ with $\tau\ge 0$ is a curve starting from
the current iteration $X$, it is natural to impose that $Y(0; X) = X$. To meet this condition, by \reff{add1}, we may ask
\be \label{equ:condition1}
\wR(0)=I_p, \quad \wN(0)=0,\quad Z(0) = I_p.
\ee
With these choices, we further have by \reff{add1} that
\be Y'(0; X) = X\left(\wR'(0) + Z'(0)\right) + W\wN'(0). \label{add2}\ee
Assume that some matrix $E$ is chosen, which is intended for $-Y'(0; X)$; {\it i.e.}, $E=-Y'(0; X)$. Then \reff{add2} holds if
\be \label{equ:condition2}
W = -(I_{n} - XX^{\msT})E, \quad  \wR'(0)+ Z'(0) = -X^{\msT}E,\quad  \wN'(0) =I_p.
\ee

\rrev{In the following, we consider three approaches of choosing \revmini{$W$}, $\wR(\tau)$, $\wN(\tau)$  and $Z(\tau)$, \revmini{which satisfy the requirements  \reff{equ:ztau2}, \reff{equ:condition1} and \reff{equ:condition2},} such that \reff{equ:ztau2} holds.}

\emph{Approach \rrev{I}}. Consider the simple case that $Z(\tau) \equiv I_p$. This, with the second equation in \reff{equ:condition2}, indicates that $\wR'(0) = -X^{\msT} E$. \revmini{By some tedious analysis in Appendix \ref{appendix:subsection:approach1}, we can obtain a generalized geodesic update scheme which covers the geodesic update scheme \reff{equ:update:geodesic} as a special case.}

\rrev{In the next two approaches, to} satisfy the conditions $\wN(0)=0$ and $\wN'(0) =I_p$, we choose
\be \wN(\tau) = \tau I_p. \label{add3} \ee
 \rrev{Noticing that $\frac{\mathrm{\partial}Z(\tau)^{-1}}{\mathrm{\partial} \tau}  = - Z(\tau)^{-1}Z'(\tau) Z(\tau)^{-1}$} and using \reff{equ:ztau2} and \reff{add3}, we can get that
\be Z'(0)^{\msT} + Z'(0) = - \left(\wR'(0)^{\msT} + \wR'(0)\right).
\nn
\ee
This, with the second condition
in \reff{equ:condition2}, means that $X^{\msT}E$ must be a skew-symmetric matrix; {\it i.e.}, $E \in \mT_X.$

\emph{Approach \rrev{II}}.\ \revmini{To meet $\wR(0) = I_p$}, we consider to choose
\be \label{equ:approach2:wR}
\wR(\tau) = I_p + \tau \wR'(0).
\ee
\revmini{Noting that}  $\wN(\tau) = \tau I_p$, we can get $Z(\tau)$ from \reff{equ:ztau2} by the  polar decomposition or the Cholesky factorization. \revmini{Thus we can obtain the generalized polar decomposition and Cholesky factorization update schemes. The former one covers the ordinary polar decomposition and gradient projection update schemes, while the latter one includes the ordinary QR factorization update scheme. See Appendix \ref{appendix:subsection:approach2} for details.}

\emph{Approach \rrev{III}}. In this approach, \revmini{to solve  $Z(\tau)$ from \reff{equ:ztau2} easily, we may assume $\wR(\tau)$ to be  some function of $Z(\tau)$, which takes the form of }
\be \label{equ:wR1}
  \wR(\tau) = 2I_p - Z(\tau)^{-1}.
\ee
Substituting  \reff{add3} and \reff{equ:wR1} into \reff{equ:ztau2} leads to
$$  Z(\tau)^{-\msT} + Z(\tau)^{-1} = 2I_p + \frac{\tau^2}{2} W^{\msT}W. $$
Consequently, $Z(\tau)$ must be of the form
\be \label{ztau1}
  Z(\tau) = \Big( I_p + \frac{\tau^2} 4 W^{\msT}W + L(\tau)\Big)^{-1},
\ee
where $L(\tau)$ is  any  $p$-by-$p$ skew-symmetric matrix with $L(0) = 0$.  Notice that the above inverse always exists since $L(\tau)$ is skew-symmetric. The relations \reff{equ:wR1} and \reff{ztau1} indicate that $\wR'(0) = Z'(0) = -L'(0)$. Further, by the second relation in \reff{equ:condition2}, we must have that $L'(0) = \dfrac{1}{2}X^{\msT}E$. Thus we can choose
\be L(\tau) = g(\tau)X^{\msT}E, \nn\ee
where $g(\tau)$ is any function  satisfying $g(0) = 0$ and  $g'(0) =1/2$. \rev{For simplicity, we choose $g(\tau) = \tau/2$.  See \S\ref{subsection:convergenceAFBB} and \S\ref{subsection:hetequadratic} for more choices of $g(\tau)$.}

To sum up, given any matrix $E \in \mT_X,$ we can define the following update scheme
 \be\label{equ:update:new}
 \left\{
 \begin{array}{l}
   W = -(I_n - XX^{\msT})E,\\
   J(\tau) = I_p + \frac{\tau^2}{4}W^{\msT}W +  \rev{\frac{\tau}{2}} X^{\msT}E, \\
   Y(\tau; X) = (2X + \tau W) J(\tau)^{-1} - X.
 \end{array}
 \right.
 \ee
\rev{Some geometrical meanings of the above scheme
   will be discussed in the paragraph before Lemma \ref{lemma:projection}.}

   A few remarks on the framework and the direction $E$ are made here. Firstly, it follows from \rrev{\reff{equ:update:geodesic:frame2}}, \reff{equ:generalpolar} and \reff{equ:generalqr} that  the framework can unify the famous \rrev{geodesic,} \rev{gradient projection},  polar decomposition and QR factorization update schemes. Meanwhile, it can yield  \rrev{generalized geodesic}, polar decomposition or QR factorization update \brev{scheme} by choosing different $\rev{\wF}$ or  $Z'(0)$. \revmini{See Appendix \ref{appendix:subsection:approach1} and \ref{appendix:subsection:approach2}, respectively, for the specific choices of $\wF$ and $Z'(0)$.} We will mainly consider the new update scheme \reff{equ:update:new} in the remainder of  this paper. Secondly, like  unconstrained optimization, \rev{many possible choices for $E$}, \rev{for instance},  the gradient descent, conjugate gradient, or quasi-Newton direction, can be used under the update scheme \reff{equ:update:new}. \brev{However}, we \brev{focus on} the gradient descent direction in \S\ref{section:choosingE}
 due to its simplicity.

\subsection{Choice of $E$}\label{section:choosingE}
In this subsection, we consider to seek an appropriate $E$ such that the update scheme $Y(\tau; X)$ given by \reff{equ:update:new} defines a descent curve. \revmini{We summarize the properties of the update scheme \reff{equ:update:new} in the following lemma. See Appendix \ref{appendix:section:proof1}} for its proof.

 \begin{lemma} \label{lemma:update}
   For any feasible point $X \in \stief$, consider the curve given by \reff{equ:update:new}, where $E = \Drho$ and
 \be\label{equ:Drho:def}
 \Drho= (I_n - (1 - 2\rho)XX^{\msT})\nF =  G - X\big(2\rho G^{\msT}X + (1-2\rho)X^{\msT}G\big), \quad \rho>0.   \ee
 Then the following properties hold:
 \begin{enumerate}
   \item[\emph{(i)}] $Y(\tau)^{\msT} Y(\tau)= I_p;$
   \item[\emph{(ii)}] $Y'(0) = -\Drho$ is a descent direction and
      \be
   \mF'_{\tau}(Y(0))\coloneqq\frac{\partial \mF(Y(\tau))}{\partial \tau}\bigg{|}_{\tau=0} \leq -\min\{\rho, 1\}\|\nF\|_{\msF}^2;  \nn
       \ee
     \item[\emph{(iii)}] for any $\tau>0$ and \rev{any $p$-by-$p$ orthogonal matrix \rrev{$Q_p$}}, we have  $Y\rrev{Q_p} \ne X;$
    \item[\emph{(iv)}]  \rev{$\mathrm{cond}(J) \leq (5 + \upsilon^2)/4$,  where $\upsilon = \tau \|\Drho\|_{\msF}^{}$.}
 \end{enumerate}
 In particular,  if $p=n$, $Y(\tau) = X(2J^{-1} - I_n)$ \rev{and $J = I_n + \frac{\tau}{2} X^{\msT} E$}; if $p=1$, the matrices $X$, $Y$, $G$ reduce to the vectors $x$, $y$, $g$, respectively, and the update scheme becomes
 $$y(\tau) = \bigg(\frac{2 + \tau x^{\msT}g }{1 + \frac{\tau^2}{4}(g^{\msT}g - (x^{\msT}g)^2)} - 1 \bigg) x - \frac{\tau}{1 + \frac{\tau^2}{4}(g^{\msT}g - (x^{\msT}g)^2)}g.$$
\end{lemma}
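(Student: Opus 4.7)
The plan is to verify the four claims by direct computation, exploiting two structural identities that follow from the definition of the scheme. First, $X^{\msT}W=0$ since $W\in\mN_X$ by construction. Second, a one-line expansion gives $X^{\msT}\Drho=2\rho(X^{\msT}G-G^{\msT}X)$, so $X^{\msT}E$ is skew-symmetric and $E=\Drho\in\mT_X$. These two facts together immediately imply the workhorse identity
\[
J(\tau)+J(\tau)^{\msT}=2I_p+\tfrac{\tau^2}{2}W^{\msT}W,
\]
which drives both (i) and (iv).

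For (i), I would expand $Y^{\msT}Y$ and use $X^{\msT}W=0$, $X^{\msT}X=I_p$ to collapse $(2X+\tau W)^{\msT}(2X+\tau W)=4I_p+\tau^2 W^{\msT}W$ and $(2X+\tau W)^{\msT}X=2I_p$. After multiplying through by $J^{\msT}$ on the left and $J$ on the right, the feasibility condition $Y^{\msT}Y=I_p$ reduces exactly to the workhorse identity above. For (ii), the product rule yields $Y'(0)=W-XX^{\msT}E=-\Drho$ (using $J(0)=I_p$ and $J'(0)=\tfrac{1}{2}X^{\msT}E$), and the chain rule then gives $\mF'_{\tau}(Y(0))=-\langle G,\Drho\rangle$. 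Splitting $X^{\msT}G$ into its symmetric and skew parts with squared Frobenius norms $\alpha$ and $\beta$, straightforward trace manipulations produce the clean expressions $\|\nF\|_{\msF}^2=\|G\|_{\msF}^2-\alpha+3\beta$ and $\langle G,\Drho\rangle=\|G\|_{\msF}^2-\alpha+(4\rho-1)\beta$. Comparing them in the two cases $\rho\le 1$ and $\rho>1$, and using $\|G\|_{\msF}^2\ge\|X^{\msT}G\|_{\msF}^2=\alpha+\beta$ (contractivity of the orthogonal projector $XX^{\msT}$), yields the descent inequality $\langle G,\Drho\rangle\ge\min\{\rho,1\}\|\nF\|_{\msF}^2$.

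For (iii), I would invoke the range--null decomposition $Y=XR(\tau)+WN(\tau)$ with $N(\tau)=\tau J(\tau)^{-1}$ invertible for $\tau>0$. If $YQ_p=X$ for some orthogonal $Q_p$, then $Y\in\mR_X$, so applying $I_n-XX^{\msT}$ to $Y$ forces $WN(\tau)=0$; invertibility of $N(\tau)$ then forces $W=0$. The statement is the contrapositive, valid in the nondegenerate regime $(I_n-XX^{\msT})G\ne 0$.

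For (iv), I would bound $\sigma_{\min}(J)$ and $\sigma_{\max}(J)$ separately. Since the skew part contributes nothing to a real bilinear form, $v^{\msT}Jv=v^{\msT}J_sv\ge\|v\|^2$ (using $J_s=I_p+\tfrac{\tau^2}{4}W^{\msT}W\succeq I_p$); Cauchy--Schwarz then gives $\|Jv\|\ge\|v\|$, so $\sigma_{\min}(J)\ge 1$. For the upper bound, the triangle inequality and $\|\cdot\|_2\le\|\cdot\|_{\msF}$ give $\sigma_{\max}(J)\le 1+\tfrac{\tau^2}{4}\|W\|_{\msF}^2+\tfrac{\tau}{2}\|X^{\msT}E\|_{\msF}$. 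Setting $U=\tau\|W\|_{\msF}$ and $V=\tau\|X^{\msT}E\|_{\msF}$, the Pythagorean identity $\|W\|_{\msF}^2+\|X^{\msT}\Drho\|_{\msF}^2=\|\Drho\|_{\msF}^2$ yields the circle $U^2+V^2=\upsilon^2$, on which the maximum of $U^2/4+V/2$ can be shown to be $(1+\upsilon^2)/4$ by noting the decisive inequality $2V-V^2-1=-(V-1)^2\le 0$. This produces $\sigma_{\max}(J)\le(5+\upsilon^2)/4$ and hence the claimed bound on $\mathrm{cond}(J)$. The main technical obstacle is hitting the sharp constant $5/4$ here: the triangle-inequality bound is only tight enough because the decomposition along the projection $XX^{\msT}$ lets one optimize the two contributions of $\|\Drho\|_{\msF}$ jointly. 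The special cases $p=n$ (where $XX^{\msT}=I_n$ forces $W=0$) and $p=1$ follow by direct substitution into the defining formulas.
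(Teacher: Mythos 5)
Your proposal is correct and its overall skeleton matches the paper's, but several sub-arguments take genuinely different routes, each of which is sound. For (ii), after the common computation $Y'(0)=-\Drho$ and $\mF'_{\tau}(Y(0))=-\langle G,\Drho\rangle$, the paper exploits the canonical-metric identity $\langle G,\Delta\rangle=\langle \nF,P_X\Delta\rangle$ for $\Delta\in\mT_X$ to collapse everything to $\langle \nF,(I_n+(\rho-1)XX^{\msT})\nF\rangle$, from which the bound is one line; your route via the symmetric/skew split of $X^{\msT}G$ and the identities $\|\nF\|_{\msF}^2=\|G\|_{\msF}^2-\alpha+3\beta$, $\langle G,\Drho\rangle=\|G\|_{\msF}^2-\alpha+(4\rho-1)\beta$ is more elementary and equally valid (I checked the traces), at the cost of a two-case comparison. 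For (iii), the paper passes through $X^{\msT}Y=2J^{-1}-I_p$ and the orthogonality of $Q_p$ to force $J+J^{\msT}$ to be a multiple of $I_p$ (its displayed ``$4I_p$'' should in fact read $2I_p$), whereas you apply $I_n-XX^{\msT}$ directly to the range--null decomposition; both arguments bottom out at $W=0$, and you are more careful than the paper in flagging that the statement as written genuinely needs the nondegeneracy $(I_n-XX^{\msT})G\ne 0$ (when $G$ lies in the range of $X$ but $X^{\msT}G\ne G^{\msT}X$, the curve does move inside $\mR_X$ and (iii) fails literally). For (iv), your upper bound on $\|J\|_2$ is the paper's argument verbatim (Pythagorean split of $\|\Drho\|_{\msF}^2$ plus the one-variable maximization giving $(1+\upsilon^2)/4$), but your lower bound $\sigma_{\min}(J)\ge 1$ via positive definiteness of the symmetric part $I_p+\frac{\tau^2}{4}W^{\msT}W$ and Cauchy--Schwarz replaces the paper's shortcut $2J^{-1}=X^{\msT}Y+I_p$ with $\|X^{\msT}Y\|_2\le 1$; your version has the advantage of not relying on (i). Your explicit verification of (i) from the identity $J+J^{\msT}=2I_p+\frac{\tau^2}{2}W^{\msT}W$ simply fills in what the paper leaves to ``the construction of the update scheme.''
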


Before we proceed, several remarks on  Lemma \ref{lemma:update} are in order. Firstly, \rev{without otherwise specification, we will always choose $E= \Drho$ in the remainder of this paper. In this case, by  \reff{equ:update:new} and the definition of $\Drho$, we must have that}
   \be
   \rev{W = -(I_n - XX^{\msT})\Drho \equiv  -(I_n - XX^{\msT})G.} \label{equ:W:D:G}
 \ee
There are two special choices of $\rho$. One is $\rho = 1/2$, yielding $Y'(0) = -\nF$; the other one is $\rho = 1/4$ yielding $Y'(0) = -G + X \mbox{sym}(X^{\msT}G)$.
\rrev{Generally, the two directions are not the same except the case when $X^{\msT}G = G^{\msT}X$.  We will compare the two directions in \S \ref{subsection:hetequadratic}; the results therein shows that $\rho = 1/4$ is a better choice.}
 Secondly, recalling that the Grassmann manifold $\mathcal{G}_{n,p}$ is defined as the set of all $p$-dimensional subspaces of an $n$-dimensional space, any two orthogonal matrices whose columns span the same $p$-dimensional subspace can be regarded  as the same point in $\mathcal{G}_{n,p}$. By (iii) of  Lemma \ref{lemma:update}, we know that any $Y(\tau)$ with $\tau>0$ and $X$ must be different points in $\mathcal{G}_{n,p}$. Thus our update scheme can be also used for optimization on the Grassmann manifold, \rev{that is,   problem \reff{prob:Stiefel}  with the additional \rrev{homogeneity} assumption that $\mF(X\rrev{Q_p}) = \mF(X)$, where $\rrev{Q_p}$ is any $p$-by-$p$ orthogonal matrix.}
 Finally, statement (iv) of Lemma \ref{lemma:update} indicates that the condition number of $J$ can be controlled by the term \rev{$\tau \|\Drho\|_{\msF}$, this fact will guide us to set a safeguard for the stepsize, as addressed in \reff{equ:algorithm:safeguard}.}

 \rev{As mentioned before, one typical choice of $E$ is} $E=D_{\rho}$. \rev{Consider the case when $p \approx n$,} it is quite expensive to calculate $J^{-1}$ directly. However, in this case we may construct a low-rank matrix $E$ so that $J^{-1}$ can be cheaply obtained. \revmini{Lemma \ref{lemma:lowrank}}  shows the possibility of choosing a rank-$2$ matrix $E$, with which $J^{-1}$ can be analytically given and fast computed. \revmini{A proof of Lemma \ref{lemma:lowrank} can be found in Appendix \ref{appendix:section:proof2}.} Similarly, we may also form a rank-$2r$ matrix $E$ with $1\leq r < n/2$ in the same vein.  \rev{Nevertheless, it is worth noting that seeking an appropriate low-rank matrix $E$ faces a trade-off between the computational cost and the quality of the search curve.}
\begin{lemma}\label{lemma:lowrank}
    For any feasible $X \in \stief$,  define $D^{(i)} = G_{(i)}^{} e_i^{\msT} - X_{(i)}^{} G_{(i)}^{T} X$.
    Consider the curve $Y(\tau)$  given by \reff{equ:update:new} with  $E = D^{(q)}$, where $q$ is the column index given by
\be \label{equ:lowrank_q}
q\coloneqq \argmax\limits_{i=1,\ldots, p}\  \langle G,  D^{(i)} \rangle =
\rrev{\argmax\limits_{i=1,\ldots, p}\  e_i^{\msT}\left(G^{\msT} \nabla \mF\right) e_i.}
\ee
	 Then we have that
\be\label{equ:lowrank:inverseJ}
\rrev{J^{-1}  = I_p - \frac{1}{1 + \alpha} \big[\ba{cc} e_q,&\  b \ea \big]
\left[\ba{cc} \alpha &\  -1 \\ 1  & \ 1\ea \right] \left[\ba{c}e_q^{\msT}\\[2pt]b^{\msT}\ea\right]},
\ee
where $\alpha =  \frac{\tau^2}{4} G_{(q)}^{T} \left( I_n - X_{(q)}^{} X_{(q)}^{\msT} \right) G_{(q)}^{}$,
\rrev{$b = \frac{\tau}{2} X_{-q}^{\msT} G_{(q)}$ and $X_{-q} = X - X_{(q)}^{}e_q^{\msT}$.}
 Moreover, $Y(\tau)$ is a descent curve satisfying
 $$\mF'_{\tau}(Y(0)) \leq -\frac{1}{2p} \|\nabla \mF\|_{\msF}^2.$$
  \end{lemma}

{\color{black}Geometrically, the proposed framework is not geodesic expect for the case when  $p = 1$.
Instead, it can be regarded as a generalized gradient projection scheme. In the special case when there always holds $X^{\msT}G \equiv G^{\msT}X$, we can explicitly get the projection operator  of new update scheme \reff{equ:update:new}. Notice that the condition $X^{\msT} G \equiv G^{\msT}$X holds for a wide range of problems, such as the linear eigenvalue problem and the vector case ($p= 1$)  of problem \reff{prob:Stiefel}.}

\begin{lemma} \label{lemma:projection}
  Assume that $X^{\msT}G \equiv G^{\msT} X$ for any feasible point $X \in \stief$. Then the update scheme \reff{equ:update:new} with $E = \Drho$ can be expressed by
$$Y(\tau) = \mP_{\stief}\left(X\Big(I_p + \tau X^{\msT}G -\frac{\tau^2}{4}G^{\msT}(I_n - XX^{\msT})G\Big) -\tau G\right). $$
\end{lemma}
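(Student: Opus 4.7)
The plan is to exploit the symmetry hypothesis $X^{\msT}G = G^{\msT}X$ to simplify $\Drho$, $W$ and $J$ drastically, and then to recognize the argument of $\mP_{\stief}$ as a polar-decomposition factorization $Y(\tau)\,J$ where $Y(\tau)\in\stief$ and $J$ is symmetric positive definite.

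First I would substitute the symmetry assumption into the definition of $\Drho$. Since $2\rho G^{\msT}X+(1-2\rho)X^{\msT}G = X^{\msT}G$ whenever $X^{\msT}G=G^{\msT}X$, we immediately get $\Drho = G - XX^{\msT}G = (I_n-XX^{\msT})G$, independent of $\rho$. Then, using the idempotence of the orthogonal projector $I_n-XX^{\msT}$ (which holds because $X^{\msT}X=I_p$), one finds
\[
W = -(I_n-XX^{\msT})\Drho = -(I_n-XX^{\msT})G = -\Drho,
\]
and $X^{\msT}\Drho = X^{\msT}G - X^{\msT}G = 0$, so the skew part of $J$ vanishes. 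The definition of $J$ in \reff{equ:update:new} then collapses to
\[
J(\tau)=I_p+\tfrac{\tau^2}{4}W^{\msT}W = I_p+\tfrac{\tau^2}{4}G^{\msT}(I_n-XX^{\msT})G,
\]
which is symmetric positive definite.

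Next I would compute $Y(\tau)\,J$ directly. From $Y(\tau)=(2X+\tau W)J^{-1}-X$, we get $Y(\tau)\,J = 2X+\tau W - XJ = 2X - \tau(I_n-XX^{\msT})G - XJ$. Writing $XJ = X + X\bigl(\tfrac{\tau^2}{4}G^{\msT}(I_n-XX^{\msT})G\bigr)$ and expanding $\tau(I_n-XX^{\msT})G = \tau G-\tau XX^{\msT}G$, the right-hand side rearranges into
\[
X\Bigl(I_p+\tau X^{\msT}G-\tfrac{\tau^2}{4}G^{\msT}(I_n-XX^{\msT})G\Bigr)-\tau G,
\]
which is exactly the matrix $C$ inside $\mP_{\stief}(\cdot)$ in the statement.

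Finally I would invoke the uniqueness of the polar decomposition. Since $Y(\tau)^{\msT}Y(\tau)=I_p$ by (i) of Lemma~\ref{lemma:update} and $J\succ 0$ is symmetric, the identity $C=Y(\tau)\,J$ gives $C^{\msT}C=J^{2}$, hence $(C^{\msT}C)^{1/2}=J$ (the unique symmetric positive definite square root). Therefore $\mP_{\stief}(C)=C(C^{\msT}C)^{-1/2}=Y(\tau)JJ^{-1}=Y(\tau)$, which is the claim. The only delicate step is the algebraic manipulation showing $Y(\tau)J=C$; everything else is bookkeeping once the symmetry hypothesis has been used to force $X^{\msT}\Drho=0$ and $W=-\Drho$.
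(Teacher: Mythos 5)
Your proposal is correct and follows essentially the same route as the paper: both use the symmetry hypothesis to kill the skew part of $J$ (so $J=I_p+\frac{\tau^2}{4}W^{\msT}W\succ 0$), rewrite $Y(\tau)J$ as the matrix $C$ inside $\mP_{\stief}(\cdot)$, and then deduce $C^{\msT}C=J^2$ from $Y^{\msT}Y=I_p$ to identify $\mP_{\stief}(C)=CJ^{-1}=Y(\tau)$. The algebra checks out, so nothing further is needed.
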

\begin{proof}
  The condition $X^{\msT}G \equiv G^{\msT} X$ implies that $X^{\msT} \Drho = 0$, \rev{which with \reff{equ:update:new} means that} $J = I_p + \frac{\tau^2}{4}W^{\msT}W$. Rewrite $Y(\tau)$ in \reff{equ:update:new} as $$Y(\tau) =\big(X(2I_p-J) + \tau W\big)J^{-1}.$$Thus it follows from $Y(\tau)^{\msT}Y(\tau) = I_p$  and  \rev{the above expressions of $Y(\tau)$ and $J$} that
$$\big(X(2I_p-J) + \tau W\big)^{\msT} \big(X(2I_p-J) + \tau W\big) = J^2.$$
\rev{Recalling the definition of $\mP_{\stief}(\cdot)$ for \reff{equ:projstiefel}, we have that}
\begin{align}
Y(\tau)&= \mP_{\stief} \big(X(2I_p-J) + \tau W\big) \nn \\
&= \mP_{\stief}\left(X\Big(I_p + \tau X^{\msT}G -\frac{\tau^2}{4}G^{\msT}(I_n - XX^{\msT})G\Big) -\tau G\right),\nn
 \end{align}
 where the second equality uses \reff{equ:W:D:G}.  This completes the proof.
 $\smartqed$
\end{proof}

\subsection{Comparison with the existing update schemes}
To begin with, we address a relationship between the update scheme \reff{equ:update:new} and the Wen-Yin update scheme. We show that
the Wen-Yin update scheme can be regarded as a special member of the update scheme \reff{equ:update:new}
with $g(\tau) = \tau/2$ under the condition that $I_p + \frac{\tau}{4}X^{\msT}D$ is invertible. Notice that this invertible condition is  indispensable for the well definition of the Wen-Yin update scheme, but is not necessary for the validity of the update scheme \reff{equ:update:new}.
\revmini{We summarize the results in Proposition \ref{proposition:update_equ}  and relegate the proof in Appendix \ref{appendix:section:proof3}.}

\begin{proposition} \label{proposition:update_equ}
  For any feasible point $X \in \stief$, if the tangent direction  $D \in \mathcal{T}_X$ is such that $I_p+\frac{\tau}{4}X^{\msT}D$ is invertible, \lastrev{then} the  update scheme \reff{equ:update:wenyin} is well-defined and it is equivalent to  the update scheme \reff{equ:update:new}  with $g(\tau) = \tau/2$ and $E = D$.
\end{proposition}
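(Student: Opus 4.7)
The strategy is to reduce the $2p\times 2p$ inverse appearing in the Wen-Yin update \reff{equ:update:wenyin} to the $p\times p$ inverse $J(\tau)^{-1}$ of the new scheme \reff{equ:update:new}. First, using $X^{\msT}P_X^{} = \tfrac12 X^{\msT}$, $P_X^{}X = \tfrac12 X$, $P_X^2 = I_n - \tfrac34 XX^{\msT}$, together with the tangent relation $X^{\msT}D = -D^{\msT}X$, I would verify that $M := I_{2p} + \tfrac{\tau}{2}V^{\msT}U$ is block-structured with $T_{11} = T_{22} = S := I_p + \tfrac{\tau}{4}X^{\msT}D$, $T_{12} = \tfrac{\tau}{2}I_p$ and $T_{21} = -\tfrac{\tau}{2}\bigl(D^{\msT}D - \tfrac34 D^{\msT}XX^{\msT}D\bigr)$; the equality $T_{22} = S$ is a direct consequence of the tangent relation. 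Likewise $V^{\msT}X$ splits into the two $p\times p$ blocks $I_p$ and $-\tfrac12 D^{\msT}X$.

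Next, I would denote by $\alpha, \beta$ the two $p\times p$ blocks of $M^{-1}V^{\msT}X$. The first block equation $S\alpha + \tfrac{\tau}{2}\beta = I_p$ allows me to eliminate $\beta = \tfrac{2}{\tau}(I_p - S\alpha)$. Substituting into the second block row $T_{21}\alpha + S\beta = -\tfrac12 D^{\msT}X$, expanding $S^2 = I_p + \tfrac{\tau}{2}X^{\msT}D + \tfrac{\tau^2}{16}X^{\msT}DX^{\msT}D$, and applying the tangent identity $X^{\msT}DX^{\msT}D = -D^{\msT}XX^{\msT}D$, the two $D^{\msT}XX^{\msT}D\alpha$ contributions of coefficients $\tfrac{3\tau^2}{16}$ (from $T_{21}$) and $\tfrac{\tau^2}{16}$ (from $S^2\alpha$) combine to $\tfrac{\tau^2}{4}D^{\msT}XX^{\msT}D\alpha$, and the equation collapses to $J(\tau)\alpha = I_p$, where $J(\tau) = I_p + \tfrac{\tau^2}{4}W^{\msT}W + \tfrac{\tau}{2}X^{\msT}D$ and $W^{\msT}W = D^{\msT}(I_n - XX^{\msT})D$ coincides with the matrix in \reff{equ:update:new}. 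Hence $\alpha = J(\tau)^{-1}$; the unconditional invertibility of $J(\tau)$ (from Lemma~\ref{lemma:update}) together with that of $S$ yields, via Lemma~\ref{fact:blockinverse}, the invertibility of $M$, so $Y_{\mathrm{wy}}$ is well-defined.

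Finally, I would substitute $\alpha = J^{-1}$ back into the Wen-Yin formula, rewritten as
\[
  Y_{\mathrm{wy}}(\tau;X) = X - \tau P_X^{}D\alpha - \tau X\beta = X\bigl((2I_p + \tau X^{\msT}D)J^{-1} - I_p\bigr) - \tau DJ^{-1},
\]
and verify by direct rearrangement that the new scheme simplifies to the same closed form via $(2X + \tau W)J^{-1} - X = X(2J^{-1} - I_p) - \tau(I_n - XX^{\msT})DJ^{-1}$, thereby establishing $Y_{\mathrm{wy}}(\tau;X) = Y(\tau;X)$. The main obstacle is the careful bookkeeping in the cancellation that reduces the second block row to $J\alpha = I_p$: once the two $D^{\msT}XX^{\msT}D\alpha$ terms are correctly merged through the tangent identity so as to reconstruct the full $\tfrac{\tau^2}{4}W^{\msT}W\alpha$ contribution, the remaining algebra is routine.
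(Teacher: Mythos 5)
Your proposal is correct and follows essentially the same route as the paper: the same block decomposition of $I_{2p}+\frac{\tau}{2}V^{\msT}U$ with diagonal blocks $I_p+\frac{\tau}{4}X^{\msT}D$, the same key identity $J=\big(I_p+\frac{\tau}{4}X^{\msT}D\big)^2+\frac{\tau^2}{4}D^{\msT}P_X^{\msT}P_X^{}D$, and the same appeal to Lemma~\ref{fact:blockinverse} for well-definedness. The only (harmless) organizational difference is that you solve the linear system $\big(I_{2p}+\frac{\tau}{2}V^{\msT}U\big)\left[\begin{smallmatrix}\alpha\\ \beta\end{smallmatrix}\right]=V^{\msT}X$ by elimination on the specific right-hand side, whereas the paper forms all four blocks of the inverse before assembling $Y_{\mathrm{wy}}$; your cancellation bookkeeping and final closed form $(2X+\tau W)J^{-1}-X$ both check out.
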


\revmini{For a fixed feasible point $X \in \stief$, assume that the gradient $G$ has been computed. Let us compare the computational costs of several aforementioned constraint preserving update schemes.
  In  Table \ref{table:cost}, we list the cost of the aforementioned update schemes and leave some details in Appendix \ref{appendix:section:details}.}  In the table, the column ``first $\tau$'' gives the cost of computing a feasible point with the first trial stepsize of \brev{$\tau_{\mathrm{first}}$}, whereas the column ``new $\tau$'' provides the cost of getting a new feasible point with a new stepsize of \brev{$\tau_{\mathrm{new}}$}.  Table \ref{table:cost} tells us that the ordinary gradient projection actually has the lowest complexity cost while the scheme \reff{equ:update:qr} based on the QR factorization and our update scheme  \reff{equ:update:new} are strong candidates. Although by Proposition \ref{proposition:update_equ}, our update scheme and the Wen-Yin update scheme are equivalent under some assumption, the former has a \lastrev{lower} complexity cost especially for large $p$, as shown in Table \ref{table:cost}.  As pointed in \cite{absil2012projection}, however, the choice of the update scheme can affect the number of iterations required for solving the optimization problem. Hence, a lower complexity cost at each iteration does not \brev{necessarily} imply a higher efficiency on the whole. Actually, how to seek a constraint preserving update scheme which can find the global minimizer with \brev{a higher} probability and at \brev{a faster} speed is still an open problem. This remains under investigation.

 \begin{table}[!htbp]
 \caption{Computational cost for different update schemes} \label{table:cost}
\centering
~\\[2pt]
\linespread{1.9}
\revmini{
\begin{small}
  \begin{tabular}{@{}r@{,\,}l@{\hspace{3mm}}r@{\,+\,}r@{\,+\,}r@{\hspace{4.5mm}}  r@{\,+\,}r@{\,+\,}r   @{\hspace{0.3mm}}c@{\hspace{4.2mm}}c@{\hspace{3.0mm}}c@{\hspace{0.3mm}}c@{\hspace{4.2mm}}r@{\hspace{3.0mm}}r@{}}
  \Xhline{0.6pt}
  \multicolumn{2}{c}{\multirow{2}{*}{update schemes}} & \multicolumn{6}{c}{$1<p<n$}&& \multicolumn{2}{c}{$p=1$}&& \multicolumn{2}{c}{$p=n$} \\
  \Xcline{3-8}{0.6pt} \Xcline{10-11}{0.6pt} \Xcline{13-14}{0.6pt}
  \multicolumn{2}{c}{}    & \multicolumn{3}{c}{first $\tau$} & \multicolumn{3}{c}{new $\tau$}& &first $\tau$ &new $\tau$ & &first $\tau$ &new $\tau$  \\  \Xhline{1.1pt}
 $^{^{\phantom{d}}}$ $Y_{\mathrm{gp}} \reff{equ:update:gp}$   & any $D$  & $3np^2$ &   $2np$ & $\frac{32}{3}p^{3}$  & $3np^2$&  $2np$ & $\frac{32}{3}p^3$  && $5n$ & $5n$ &&$\frac{41}{3}n^3$ & $\frac{41}{3}n^3$\\
 $Y_{\mathrm{geo1}}\reff{equ:update:geodesic}$ &$D = \Drho$  & $10np^2$ &$np$ & $80 p^3$ & $4np^2$ &  & $80p^3$ & &$7n$ & $3n$ && $14n^3$ & $12n^3$\\
 $Y_{\mathrm{pd}} \reff{equ:update:polar}$& $D = \Drho$ & $7np^2$&   $3np$ &  $\frac{32}{3}p^3$ & $2np^2$ & $2np$ &$\frac{32}{3}p^3$ & & $7n$ & $3n$&& $\frac{53}{3}n^3$ & $\frac{38}{3}n^3$\\
 $Y_{\mathrm{qr}}\reff{equ:update:qr}$& $D = \Drho$     & $6np^2$&  \multicolumn{2}{l}{\hspace{-2mm}$3np$}    &  $2np^2$ & \multicolumn{2}{l}{\hspace{-2mm}$2np$} & & $7n$ & $3n$ && $6n^3$ & $2n^3$\\
$Y_{\mathrm{wy}} \reff{equ:update:wenyin}$& $D = \Drho$& $9np^2$&   $2np$ & $\frac{40}{3}p^3$& $4np^2$ & $np$ & $\frac{40}{3}p^3$& & $7n$ & $3n$ && $\frac{67}{3}n^3$ & $\frac{52}{3}n^3$ \\
 $Y_{\mathrm{wy}} \reff{equ:update:wenyin}$& $D = D_{1/2}$& $7np^2$&  $np$ & $\frac{40}{3}p^3$ & $4np^2$ &  $np$ & $\frac{40}{3}p^3$&&  $7n$ & $3n$ && $\frac{61}{3}n^3$ & $\frac{52}{3}n^3$\\
$Y \reff{equ:update:new}$ & $D = \Drho$ & $7np^2$&  $2np$ & $\frac{5}{3}p^3$ &$2np^2$ & $3np$ & $\frac{2}{3}p^3$ & &$7n$ &  $3n$ && $\frac{14}{3}n^3$ & $\frac{8}{3}n^3$ \\
 \Xhline{0.6pt}
\end{tabular}
\end{small}
}
\end{table}

\section{Adaptive feasible BB-like (AFBB) method and global convergence}\label{section:algorithm}
In this section, we focus on  the adaptive feasible BB-like  method and  its global convergence. We also propose a strategy to control the feasibility error in practical computations.

\subsection{Adaptive feasible BB-like method}\label{subsection:AFBB}
To provide an efficient scheme, we must also pay much attention on choosing the stepsize $\tau$ in the constraint preserving update scheme \reff{equ:update:new} with $E=\Drho$.  Although there is only one parameter, its choice proves very important to the efficiency of the scheme.  Since BB-like methods need less storage and are very efficient for unconstrained optimization problems \cite{dai2006cyclic, fletcher2005barzilai,raydan1997barzilai, zhou2006gradient} and special constrained optimization problems \cite{birgin2000nonmonotone,dai2005projected,dai2006new, wen2013feasible}, we consider to use some BB-like stepsize in the scheme \reff{equ:update:new}.

Denote $\brev{\mathrm{S}}_{k-1}=X_k-X_{k-1}$ and $\brev{\mathrm{Y}}_{k-1}=D_{\rho,k}-D_{\rho,k-1},$ where $X_k = Y(\tau_{k-1}; X_{k-1})$. Similar to the unconstrained case, we can get the large and short BB stepsizes as follows:
  \be
  \tau_k^{\brev{\mathrm{LBB}}}=\frac{\langle \brev{\mathrm{S}}_{k-1}, \brev{\mathrm{S}}_{k-1}\rangle }{|\langle \brev{\mathrm{S}}_{k-1}, \brev{\mathrm{Y}}_{k-1}\rangle |}
  \ \ \text{and}\ \ \tau_k^{\brev{\mathrm{SBB}}}=\frac{|\langle \brev{\mathrm{S}}_{k-1},\brev{\mathrm{Y}}_{k-1}\rangle |}{\langle \brev{\mathrm{Y}}_{k-1}, \brev{\mathrm{Y}}_{k-1}\rangle}. \nn
\ee
In the numerical experiments in \S\ref{section:numericaltests}, we adopt the following alternative BB (ABB) stepsize   \cite{dai2005projected}
\be \label{equ:Stepsize:ABB}
 \tau_{k}^{\brev{\mathrm{ABB}}}= \left\{ \begin{array}{ll}\tau_k^{\brev{\mathrm{SBB}}}, & \mbox{~for odd~} k; \\ \tau_k^{\brev{\mathrm{LBB}}}, & \mbox{~for even~}  k.  \end{array}\right.
\ee
If $J_{k-1}^{-1}$ is  stored in updating $X_k$, we can \rev{freely} get $\langle \brev{\mathrm{S}}_{k-1}, \brev{\mathrm{S}}_{k-1} \rangle =4p-4\mtr(J_{k-1}^{-1})$ \rev{which is due to  the feasibility of $X_k$ and $X_{k-1}$ \rrev{and $X_{k-1}^{\msT} X_k^{} = 2J_{k-1}^{-1} - I_p$}}. Thus, computing the ABB stepsize needs  at most $6np$ flops.

Although BB-like methods prove efficient for nonlinear optimization, its heavy nonmonotonicity in function values makes the global convergence analysis difficult. Up to now, the unmodified BB method is only showed to be globally convergent in the \rev{strongly}  convex quadratic case \cite{raydan1993barzilai} and the convergence is $R$-linear \cite{dai2002convergence}.   It is shown in \cite{Daijiang2012} that the unmodified BB method needs not converge for \brev{the} extreme eigenvalue problem, which is a special case of problem \reff{prob:Stiefel}. \brev{Thus} we consider to incorporate the ABB stepsize with the \rev{Armijo-type line search which requires $\tau_k$ to satisfy}
\be 
\mF(Y_k(\tau_k))  \leq \mF_r + \delta \tau_k \mF_{\tau}'\left(Y(0;X_k)\right),\nn
\ee
\rev{for some constant $\delta \in (0,1)$, and} $\mF_r$ is the so-called ``reference function value'', satisfying $\mF_r \geq \mF_k$.
\rev{In this paper, we consider to update $\mF_r$ by the adaptive strategy proposed in \cite{dai2005projected}. This strategy \lastrev{cannot} only guarantee the convergence, but can keep the efficiency of the unmodified BB method since only few line search procedures will be invoked during the iterations. Denote by $\mF_{\rev{\mathrm{best}}}$ the current best function value and by $\mF_c$ the maximum objective value since the value of $\mF_{\rev{\mathrm{best}}}$ was found.  Initially, we \rrev{set} $\mF_r \coloneqq + \infty,\, \mF_{\rev{\mathrm{best}}} = \mF_{c} = \mF(X_0)$.  Let $L$ \rrev{be} a preselected positive integer.  The following is a detailed description of the aforementioned adaptive strategy}.
\be \label{equ:updateFr}
\ba{ll}
\mbox{if}& \mF_{k+1} < \mF_{\brev{\mathrm{best}}} \\
&\mF_{\brev{\mathrm{best}}} = \mF_{k+1},\  \mF_c = \mF_{k+1},\  l =0,\\
\mbox{else}& \\
& \mF_c = \max\{\mF_c,\mF_{k+1}\},\  l = l+1,\\
&\mbox{if} \  l = L, \ \ \mF_r = \mF_c,  \ \mF_c = \mF_{k+1}, \ l = 0, \ \ \mbox{end} \\
\mbox{end}
\ea
\ee
\rev{For other nonmonotone line search methods, interested readers can refer to \cite{dai2001adaptive, grippo1986nonmonotone, raydan1997barzilai, toint1997non, zhang2004nonmonotone}.}

\rev{Now we give a detailed description of the AFBB method.}
 \begin{algorithm} \label{algorithm:afbb}
(Adaptive Feasible BB-like   Method)
\begin{description}
\item[Step 0] 
  \brev{Given $X_0 \in \stief$,  $\epsilon, \rho>0$, $0<\sigma, \delta<1$, $\epsilon_{\min}, \epsilon_{\max} >0,   \Delta, L > 0$ and set $k\coloneqq0$.}
\item[Step 1] If $\|D_{\rho,k}\|_{\msF}^{} \leq \epsilon,$ stop.
\item[Step 2] Find the least nonnegative integer $i_k$ satisfying
               \be \label{equ:linesearch:Armijo}
	       \mF(Y(\delta^{i_k}\tau_k^{(1)}; X_k)) \leq  \mF_r + \delta\sigma^{i_k}\tau_k^{(1)} \mF'_{\tau}(Y(0; X_k)),
               \ee
	       and set $\tau_k \coloneqq \sigma^{i_k}\tau_k^{(1)}$.
\item[Step 3]
      Update  \rev{$X_{k+1}=Y(\tau_k; X_k)$ by \reff{equ:update:new} and $\mF_r$ by \reff{equ:updateFr}.}
\item[Step 4] Calculate $\tau_k^{(0)}$ by  a certain  BB stepsize and set
                  \be \label{equ:algorithm:safeguard}
		  \tau_k^{(1)} \coloneqq \max \Big\{\epsilon_{\min}/\|D_{\rho,k}\|_{\msF}^{}, \min\big\{\tau_k^{(0)}, \min\{\epsilon_{\max}/\|D_{\rho,k}\|_{\msF}^{},\Delta\}\big\}\Big\}.
                 \ee
\item[Step 5] $k\coloneqq k+1$.  Go to Step 1.
\end{description}
\end{algorithm}
\subsection{Convergence of the AFBB method}\label{subsection:convergenceAFBB}
In this subsection, we establish the global convergence result of Algorithm \ref{algorithm:afbb} in the numerical sense.
\begin{theorem}\label{theorem:convergence}
 Let $\{X_k: k\geq 0\}$ be the sequence generated by Algorithm \ref{algorithm:afbb} with $\epsilon =0$.
 \rev{Assume that $\mF(X)$ is differentiable and that its gradient $\mD \mF(X)$ is Lipschitz continuous on $\stief$ with Lipschitz constant $L_0$, that is,}
 $$\rev{\|\mD \mF(X) - \mD \mF (Y)\|_{\msF}^{} \leq L_0 \|X- Y\|_{\msF}^{}, \quad \mbox{for all $ X, Y \in \stief. $} }$$
Then we have either $D_{\rho,k} =0$ for some finite $k$ or
\be
  \liminf_{k \rightarrow \infty} \|D_{\rho,k}\|_{\msF}^{} = 0. \label{add11}
\ee
\end{theorem}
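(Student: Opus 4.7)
The plan is to proceed by contradiction: assume there exists $\mu>0$ with $\|\Drhok\|_{\msF}^{}\ge\mu$ for all $k$ sufficiently large, and drive $\mF_r\to-\infty$ to contradict the boundedness of $\mF$ on the compact set $\stief$. By Lemma \ref{lemma:optimality} and continuity the hypothesis also yields $\|\nF_k\|_{\msF}^{}\ge\mu'>0$ for some $\mu'$, while Lipschitz continuity of $\mD\mF$ together with compactness bound $M\coloneqq\sup_k\|\Drhok\|_{\msF}^{}<\infty$ and $\inf_{\stief}\mF>-\infty$.

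The decisive step is to establish a uniform positive lower bound $\tau_k\ge\tau^*>0$ on the accepted stepsizes. I would combine three ingredients: (i) the safeguard \reff{equ:algorithm:safeguard}, which gives $\tau_k^{(1)}\in[\epsilon_{\min}/M,\Delta]$ and $\tau_k^{(1)}\|\Drhok\|_{\msF}^{}\le\epsilon_{\max}$; (ii) the bounds $\|J^{-1}\|_2\le1$ and $\mathrm{cond}(J)\le(5+\epsilon_{\max}^{2})/4$ from Lemma \ref{lemma:update}(iv), which together with the explicit form \reff{equ:update:new} yield $\|Y(\tau;X_k)-X_k\|_{\msF}^{}\le C_1\tau\|\Drhok\|_{\msF}^{}$ on $\tau\in[0,\tau_k^{(1)}]$; and (iii) Lipschitz continuity of $\mD\mF$, producing the uniform quadratic estimate $\mF(Y(\tau;X_k))\le\mF_k+\tau\,\mF'_\tau(Y(0;X_k))+C_2\tau^2\|\Drhok\|_{\msF}^{2}$ with $C_1,C_2$ independent of $k$. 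Combined with the descent bound $\mF'_\tau(Y(0;X_k))\le-\min\{\rho,1\}(\mu')^{2}$ from Lemma \ref{lemma:update}(ii), this renders the Armijo test \reff{equ:linesearch:Armijo} against any $\mF_r\ge\mF_k$ satisfiable once $\tau$ falls below a threshold $\bar\tau>0$ depending only on $\mu,\mu',L_0,\rho,\delta$. Hence the backtracking terminates with $\tau_k\ge\tau^*\coloneqq\sigma\bar\tau$, and the Armijo condition in turn yields $\mF_{k+1}\le\mF_r-\eta$ for all $k$ large, where $\eta\coloneqq\delta\tau^*\min\{\rho,1\}(\mu')^{2}>0$.

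To close the argument, I would analyze the dynamics of $\mF_r$ prescribed by \reff{equ:updateFr}. Every value $\mF(X_{k+1})$ contributing to the running maximum $\mF_c$ in the window between two consecutive resets was accepted against the prevailing $\mF_r$, so $\mF(X_{k+1})\le\mF_r-\eta$; hence at each reset $\mF_r^{\mathrm{new}}=\mF_c\le\mF_r^{\mathrm{old}}-\eta$. If $\mF_r$ is reset infinitely many times, the resulting strictly decreasing subsequence of reference values diverges to $-\infty$, contradicting the boundedness of $\mF$ on $\stief$. Otherwise $\mF_r$ is reset only finitely often, which forces the counter $l$ to remain below $L$ eventually, and therefore $\mF(X_{k+1})<\mF_{\mathrm{best}}$ occurs in every block of $L$ consecutive iterations beyond some index; extracting a convergent subsequence $X_{m_j}\to X^*\in\stief$ along these improvement iterations, with $\tau_{m_j}\to\tau^{\infty}\in[\tau^*,\Delta]$, and passing the Armijo inequality to the limit using $\mF(X^*)=\lim\mF_{\mathrm{best}}$ likewise produces a contradiction along the lines of the nonmonotone analysis in \cite{dai2005projected}.

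The principal obstacle lies in establishing the uniform quadratic estimate in the second paragraph: the constants $C_1,C_2$ must be independent of $k$, which rests critically on Lemma \ref{lemma:update}(iv) together with the safeguards \reff{equ:algorithm:safeguard} to keep $\|Y(\tau;X_k)-X_k\|_{\msF}^{}$ and $\|J(\tau)^{-1}\|_2$ uniformly controlled over $k$ and $\tau\in[0,\tau_k^{(1)}]$; everything else then reduces to adapting the standard nonmonotone Armijo-line-search template to the Stiefel setting.
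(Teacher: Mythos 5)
Your main thread matches the paper's argument: bound the accepted stepsizes away from zero using the safeguard \reff{equ:algorithm:safeguard}, the curve estimates, and the Lipschitz continuity of $\mD\mF$; conclude a uniform decrease $\mF_{k+1}\le\mF_r-\eta$; and telescope over the resets of $\mF_r$ to contradict the boundedness of $\mF$ on $\stief$. Two issues, one minor and one substantive. The minor one: your ``uniform quadratic estimate'' needs more than $\|Y(\tau;X_k)-X_k\|_{\msF}\le C_1\tau\|\Drhok\|_{\msF}$ together with Lipschitz continuity of $\mD\mF$, because $\frac{d}{d\tau}\mF(Y(\tau))=\langle \mD\mF(Y(\tau)),Y'(\tau)\rangle$, so you must also control $\|Y'(\tau)-Y'(0)\|_{\msF}$ (the paper's second inequality in \reff{equ:theorem:convergence:well}, of order $\tau\|\Drhok\|_{\msF}^2$) and use the uniform bound $\|\mD\mF\|_{\msF}\le c_0$ on the compact manifold. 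This is fixable but is not a consequence of Lemma \ref{lemma:update}(iv) alone.

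The substantive gap is your treatment of the case in which $\mF_r$ is reset only finitely often. In that case $\mF_r$ freezes at some value $\mF_r^{\infty}$, and the Armijo test \reff{equ:linesearch:Armijo} only yields $\mF_{k+1}\le\mF_r^{\infty}-\eta$ for all large $k$; it gives no decrease of $\mF_{k+1}$ relative to $\mF_k$ or to $\mF_{\mathrm{best}}$. Your proposed limit argument --- extract $X_{m_j}\to X^*$ along improvement iterations and ``pass the Armijo inequality to the limit'' --- does not produce a contradiction: in the limit you only recover $\mF(X^*)\le\mF_r^{\infty}-\eta$, which is consistent with $\|\Drhok\|_{\msF}\ge c_3>0$, and the improvements of $\mF_{\mathrm{best}}$ can be arbitrarily small in exact arithmetic, so $\mF_{\mathrm{best}}$ may converge without forcing anything. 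This is precisely why the paper proves the theorem ``in the numerical sense'': it invokes finite precision, arguing that each improvement $\mF_{k+1}<\mF_{\mathrm{best}}$ satisfies $\mF_{k+1}\le\mF_{\mathrm{best}}-\epsilon_{\mathrm{mach}}$, so only finitely many improvements can occur before $\mF$ would be unbounded below; hence $l^k=L$ (a reset) must occur infinitely often, and only the telescoping branch of the argument survives. Without that device (or some replacement for it), the finitely-many-resets case is not closed, and your proof is incomplete at exactly the point where this adaptive reference-value strategy differs from the standard nonmonotone templates.
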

\begin{proof} \rev{The sketch of proof is as follows (see \cite{jiang2012framework} for more details). Firstly, for the update scheme \reff{equ:update:new}, we can show (see Lemma 4.4 of \cite{jiang2012framework}) that}
  \be\label{equ:theorem:convergence:well}
\rev{\|Y_k(t_k) - Y_k(0)\|_{\msF}^{} \leq \frac{2 + \epsilon_{\max}}{2}   t_k \|\Drhok\|_{\msF}^{}, \quad
\|Y_k'(t_k) - Y_k'(0)\|_{\msF}^{}  \leq  \frac{4 + \epsilon_{\max}}{2}  t_k \|\Drhok\|_{\msF}^2,}
   \ee
\rrev{where $Y_k(t_k)$ and $Y_k'(t_k)$ stand for $Y(t_k; X_k)$ and $Y'(t_k; X_k)$, respectively. }

Secondly, \rev{since $\mD \mF(X)$ is Lipstchitz continuous on the compact set $\stief$, there exists a constant $c_0 > 0$ such that}
$$\rev{\|\mD \mF(X) \|_{\msF}^{} \leq \rev{c_0}, \quad \mbox{for all $X \in \stief$}.} $$
\rev{By employing \reff{equ:theorem:convergence:well}, we can verify \cite{jiang2012framework} that the stepsizes $\{\tau_k\}$ are bounded below, as addressed by}
 \be\label{equ:theorem:convergence:lowbound}
 \rev{ \tau_k \geq \min\{c,\tau_k^{(1)}\},}
 \nn
 \ee
 \rev{where $c = \frac{2\sigma(1-\delta)\min\{\rho,1\}}{c_1  \max\{2\rho,1\}^2}$ and $\ c_1 = \frac{4 + \epsilon_{\max}}{2}c_0 + \frac{2 + \epsilon_{\max}}{2}L_0$.}

Thirdly, \brev{denote} $\mF_r, \mF_c, \mF_{\mathrm{\brev{best}}}, l$ at the $k$-th iteration by $\mF_r^k, \mF_c^k, \mF_{\mathrm{\brev{best}}}^k, l^k$, respectively. It follows from  \reff{equ:linesearch:Armijo},  the second statement of Lemma \ref{lemma:update}, $\tau_k \geq \min\{c,\tau_k^{(1)}\}$   and $\|\Drhok\|_{\msF}^{} \leq \max\{2\rho,1\} \|\nF_k\|_{\msF}^{}$  that
\be\label{equ:theorem:convergence:a1}
\mF_{k+1} \leq \mF_r^k - \delta \brev{c_2} \|\Drhok\|_{\msF}^2,
\ee
where $\brev{c_2} = \frac{\min\{c,\tau_k^{(1)}\}  \min\{\rho,1\}}{\max\{2\rho,1\}^2}.$
Assume that  $\Drhok = 0$ will not happen after finite iterations. \rev{Numerically, if $\mF_{k+1} < \mF_{\mathrm{best}}$, we must have that
  $\mF_{k+1} \leq \mF_{\mathrm{best}} - \epsilon_{\mathrm{mach}}$, where $\epsilon_{\mathrm{mach}}$ is the machine precision again. By this, we can show \cite{jiang2012framework} that $l^k = L$  holds for infinite number of times.} Define the infinite set $\mathcal{K}\coloneqq\{k_i : l^{k_i} = L\}.$ If the relation \reff{add11} is false, then there must exist a positive constant $\brev{c_3}>0$ such that $\|\Drhok\|_{\msF}^{} \geq \brev{c_3}$ for all sufficiently large $k$. Then we obtain by \reff{equ:theorem:convergence:a1} that
\be \label{equ:theorem:convergence:a2} \mF_{j} \leq \mF_r^{k_i} - \varepsilon_1 \quad\mbox{for all $k_i<j\leq k_{i+1}$,} \ee
where $\varepsilon_1 = \frac{\delta  \brev{c_3} \min\{\rho,1\}\min\{c\cdot \brev{c_3}, \epsilon_{\min}\}}{\max\{2\rho,1\}^2}$ is a positive constant. Moreover, by the definition of $\mF_r$, we know that   $\mF_r^{k_{i+1}} \leq \max_{k_i < j \leq k_{i+1}} \mF_j$,  which with \reff{equ:theorem:convergence:a2} implies that for all $k_i\in \mathcal{K}$,
 $$\mF_{r}^{k_{i+1}} \leq \mF_r^{k_i} - \varepsilon_1. $$
Since $\mathcal{K}$ is an infinite set, the summation of the above relation over $\mathcal{K}$ leads to a contradiction to the boundedness of $\mF(X)$ in the feasible set.  Therefore \reff{add11} must be true. This completes the proof.
$\smartqed$
\end{proof}

Several remarks on the convergence are made here.
\rev{Firstly, we note that} Theorem \ref{theorem:convergence} still holds for \rev{a certain} nonlinear $g(\tau)$, provided that $|g(\tau)/\tau|$ is bounded above.
\rev{Secondly, we point out that inequality \reff{equ:theorem:convergence:well}, the basis for proving \reff{equ:theorem:convergence:lowbound}, is a crucial ingredient in establishing the convergence. Actually, this inequality also holds for the gradient projection, the polar decomposition or the Wen-Yin update scheme, but with different constant before the term $t_k$.} Thus  similar convergence results can be established for the \rev{three} schemes in the framework of Algorithm \ref{algorithm:afbb}.
\rev{Thirdly, \rrev{with the key inequalities in \reff{equ:theorem:convergence:well}}, we can similarly prove the global convergence of the feasible BB-like methods based on the monotone line search, or the nonmonotone line \rrev{searches} proposed in \cite{grippo1986nonmonotone, zhang2004nonmonotone}.}
\rev{Finally,  we know from \reff{equ:algorithm:safeguard} that  the sequence $\{\tau_k: k \geq 0\}$ is bounded above. This, with $\upsilon_k = \tau_k \|\Drhok\|_{\msF}^{}$ and  Theorem \ref{theorem:convergence}, indicates that  either $\upsilon_k =0$ for some finite $k$ or $\liminf\limits_{k \rightarrow \infty} \upsilon_k =0.$ Combining this with  statement (iv) of Lemma \ref{lemma:update},  we have either $\text{cond}(J_k) = 1$ for some finite $k$ or $ \liminf\limits_{k \rightarrow \infty} \text{cond}(J_k) = 1$. }
\subsection{Controlling feasibility errors}\label{subsection:error}
The update scheme \reff{equ:update:new} is so constructed that $Y(\tau)^{\msT}Y(\tau) = I_p$ always holds. In practical computations, however,
the orthogonality constraint may be violated after several iterations. As will be seen, it is mainly due to the numerical errors occured
on the multiplication $X^{\msT}W$, which should be exactly equal to zero in theory. In the following, we give a detailed analysis for this phenomenon and then propose  a strategy for controlling the feasibility errors.

Now we assume that all the arithmetics are exact, but the \brev{orthogonality} constraint \brev{may be} violated at $X$. 
Denote $\Xi_X = X^{\msT}X - I_p$, $\Xi_Y = Y^{\msT}Y - I_p.$ It is easy to verify from the \brev{definition} \brev{\reff{equ:update:new}} of  $J$ that
\be
\rev{(2I_p - J)^{\msT} (2I_p - J) + \tau^2 W^{\msT}W = J^{\msT}J.} \nn
\ee
\rev{Performing respectively the left and right multiplications by $J^{-\msT}$ and $J^{-1}$ on both sides of the above equation, we obtain}
\be \label{equ:error1}
(2J^{-1}-I_p)^{\msT}(2J^{-1}-I_p) + \tau^2 J^{-\msT}W^{\msT}WJ^{-1} = I_p,
\ee
which implies that
\be \|2J^{-1} - I_p\|_2 \leq 1. \label{add12} \ee
Rewrite $Y(\tau)$ in update scheme \reff{equ:update:new} as $Y(\tau) = X(2J^{-1} - I_p) + \tau W J^{-1}$. This,  with  \reff{equ:error1} and $X^{\msT}X = \Xi_X + I_p$, implies  that
\be \label{equ:Xi}
\Xi_Y = (2J^{-1} - I_p)^{\msT} \Xi_X(2J^{-1} - I_p) +\tau(2J^{-1} - I_p)^{\msT}X^{\msT}WJ^{-1} + \tau J^{-T}W^{\msT}X(2J^{-1} - I_p).
\ee
It follows from  $W = -(I_n - XX^{\msT})\Drho$ in \reff{equ:update:new} and the definition of $\Xi_X$ that  $X^{\msT} W  =  \Xi_X X^{\msT}\Drho$. Then by the Cauchy inequality, \reff{equ:Xi},  \reff{add12}, $\|J^{-1}\|_2 \leq 1$ and $\tau \|\Drho\|_{\msF}^{} \leq \epsilon_{\max}$, we know that
\be\label{equ:Xinorm}
\|\Xi_Y\|_{\msF}^{} \leq \left(1 + 2 \tau \|X^{\msT}\Drho\|_{\msF}^{} \right) \|\Xi_X\|_{\msF}^{}  \leq \Big(1 + 2\epsilon_{\max}\rev{\sqrt{1+ \|\Xi_{X}\|_2^{}}} \Big)\|\Xi_X\|_{\msF}^{},
\ee
\brev{showing} that the  feasibility of $Y$ may be out of control after some iterations.

To control the feasibility errors, we propose an approach, in which the matrix $W$ in the scheme is replaced by
\be \label{equ:newW}
 \widehat{W} = -\left(I_n - X(X^{\msT}X)^{-1} X^{\msT}\right)G = -\left(I_n - X(X^{\msT}X)^{-1} X^{\msT}\right)\Drho.
\ee
Notice that when $X^{\msT}X$ is exactly $I_p$, $\widehat{W}$ reduces to $-(I_n - XX^{\msT})\Drho$ which is identical to  the original $W$ in \reff{equ:update:new} corresponding to $E = \Drho$.
Denote by $\widehat{Y}$ the new $Y$ computed by using $\widehat{W}$ and define $\widehat{\Xi}_Y = \rev{\widehat{Y}^{\msT}\widehat{Y}}- I$. Noting that $ X^{\msT} \widehat{W}\equiv 0$, similarly to the deriving of \reff{equ:Xi}, we obtain that
$$\widehat{\Xi}_Y = (2J^{-1} - I_p)^{\msT} \Xi_X(2J^{-1} - I_p ), $$
which with \reff{add12} implies that $\|\widehat{\Xi}_Y\|_{\msF}^{}   \leq \|\Xi_X\|_{\msF}^{}.$
Thus, with $np^2 + 8p^3/3$ extra flops for computing the  $\widehat{W}$ in \reff{equ:newW}, we can efficiently control the cumulative feasibility errors. Considering that $X^{\msT}\widehat{W}$ is not exactly zero in practical computations, we may reasonably assume that
$\|X^{\msT}  - X^{\msT}X (X^{\msT}X)^{-1}X^{\msT}\|_{\msF}^{}$ is in the same order as the machine precision $\varepsilon_{\text{mach}}$. Then we have $\|X^{\msT}\widehat{W}\|_{\msF}^{} \leq O(\varepsilon_{\text{mach}}) \| \Drho\|_{\msF}^{}$. Following the analysis of deriving \reff{equ:Xinorm},  we have that
$$ \|\widehat{\Xi}_Y\|_{\msF}^{} \leq \|\Xi_X\|_{\msF}^{} + 2 \rev{\epsilon_{\max}O(\varepsilon_{\text{mach}})},$$
which significantly improves the bound in \reff{equ:Xinorm}.
 \begin{figure}\label{figure:error}
\centering
\includegraphics[totalheight = 2. in ]{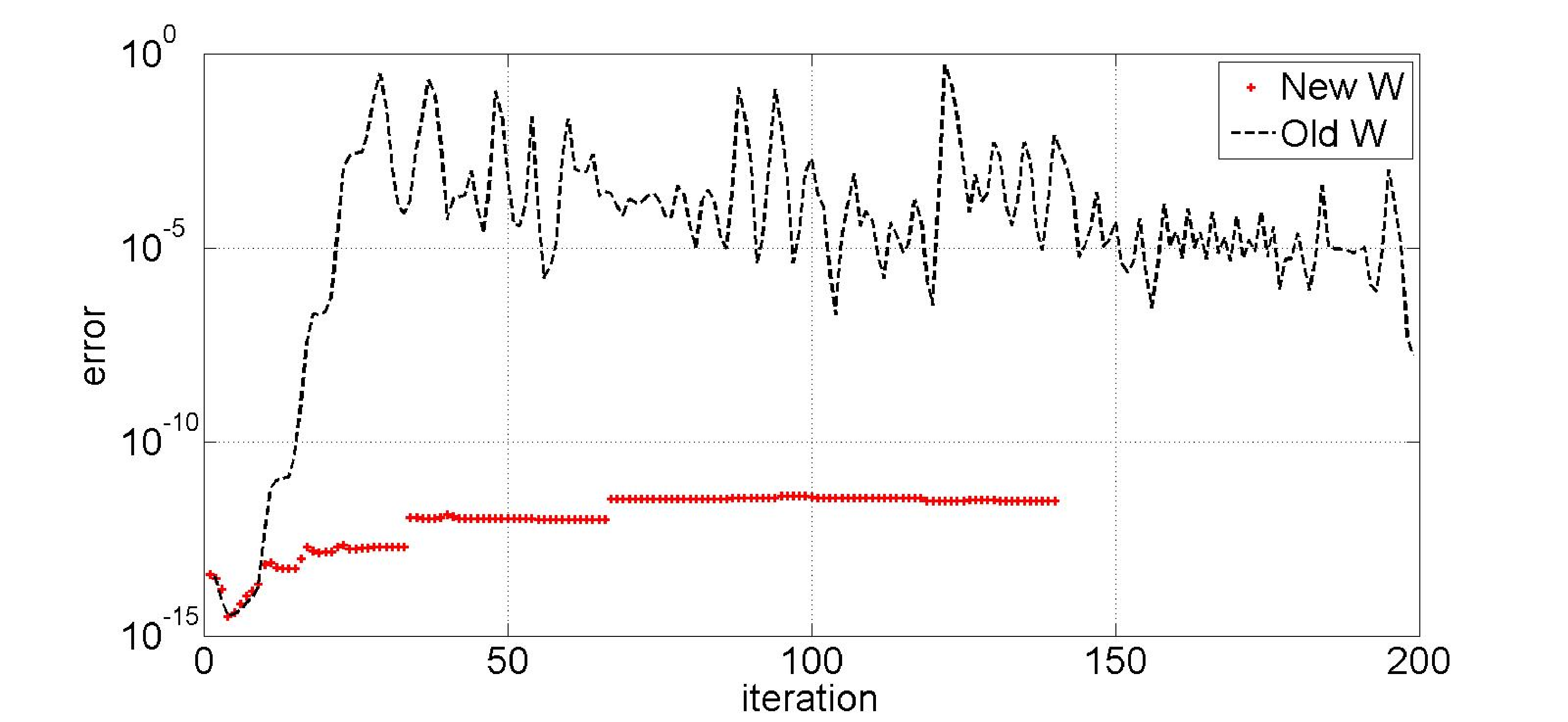}
\caption{The error of feasibility during iterations}
\end{figure}

To illustrate the usefulness of the above strategy, we \brev{take} a typical example, which is to calculate the sum of the four
largest eigenvalue of matrix ``S3DKT3M2'' from UF Sparse Matrix Collection \cite{davis2009university}.  We tried  \brev{two versions of} Algorithm \ref{algorithm:afbb}\brev{, in which the original $W$ and the modified $\widehat{W}$ in \reff{equ:newW} are used respectively.} \brev{The corresponding parameters are chosen according to \S\ref{subsection:stopping}.}
   Figure \ref{figure:error} depicts the error of the feasibility  versus the iteration number. From this figure, we see that the feasibility errors during the iterations are efficiently controlled by using $\widehat{W}$.

\section{Several extensions}\label{section:extension}
In this section, we mention some extensions of the constraint preserving update scheme \reff{equ:update:new} and the AFBB method.
At first, we consider \rev{a special case of \reff{prob:MultiOrthoStiefel}; {\it i.e.}, optimization}
with the \rrev{generalized} orthogonality constraint,
\be \label{prob:GeneralStiefel}
 \min_{X \in \mbC^{n \times p}}  \, \mathcal{F}(X),\ \  \mbox{s.t.} \ \ X^*HX=K,
\ee
where $H \in \mbR^{n \times n}$ is a symmetric positive semidefinite (not necessarily symmetric positive definite) matrix and  $K \in \mbR^{p \times p} $ is a symmetric positive definite matrix. The first-order optimality condition of  problem \reff{prob:GeneralStiefel} is as follows.
\begin{lemma} \label{lemma:generaloptimality}
Suppose $X$ is a local minimizer of problem \reff{prob:GeneralStiefel}. Then $X$ satisfies the first-order optimality condition
$\nabla \mF \coloneqq G - H X G^* X K^{-1} =0$, \rev{with the associated Lagrange multiplier $\Lambda = G^* X K^{-1}$.  Moreover,
$\nabla \mF = 0$ if and only if  $D \coloneqq GX^*H^2X - HXG^*HX = 0$. }
\end{lemma}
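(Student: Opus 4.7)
I plan to mirror the argument of Lemma~\ref{lemma:optimality}. First I would form the Lagrangian
\[
\mL(X,\Lambda) = \mF(X) - \langle \Lambda,\, X^*HX - K\rangle,
\]
with $\Lambda \in \mbC^{p\times p}$ taken Hermitian (justifiable since the constraint matrix $X^*HX - K$ is Hermitian). Before differentiating I would verify the linear independence constraint qualification: $HX$ has full column rank, because $HXv = 0$ forces $Kv = X^*HXv = 0$ and hence $v = 0$ by $K\succ 0$. Differentiating $\mL$ in $X$ and collecting the symmetric part of $\Lambda$ into the multiplier (exactly as in the Stiefel case), the stationarity condition then simplifies to $G = HX\Lambda$.

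\textbf{Identifying the multiplier.} Next I would pre-multiply $G = HX\Lambda$ by $X^*$ and substitute $X^*HX = K$ to obtain $\Lambda = K^{-1}X^*G$. The Hermitianness of $\Lambda$ then forces $\Lambda = (K^{-1}X^*G)^* = G^*XK^{-1}$, and plugging this back into $G = HX\Lambda$ yields $G = HXG^*XK^{-1}$, which is exactly $\nabla\mF = 0$ with Lagrange multiplier $\Lambda = G^*XK^{-1}$. This settles the first half of the statement.

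\textbf{Equivalence $\nabla\mF = 0 \Leftrightarrow D = 0$.} For the forward direction I would substitute $G = HX\Lambda$ with $\Lambda$ Hermitian directly into $D$ and observe that both terms reduce to $HX\Lambda X^*H^2X$, so $D$ vanishes. The reverse direction takes more care. Starting from $D = 0$, I would left-multiply by $X^*$ to get $KG^*HX = X^*GX^*H^2X$, hence $G^*HX = K^{-1}X^*G\,X^*H^2X$. Feeding this back into $HXG^*HX = GX^*H^2X$ and cancelling the invertible Gram matrix $X^*H^2X$ (invertible precisely because $HX$ has full column rank) leaves
\[
G = HXK^{-1}X^*G.
\]
Setting $\tilde\Lambda := K^{-1}X^*G$ and substituting $G = HX\tilde\Lambda$ into $D = 0$ a second time produces $HX(\tilde\Lambda - \tilde\Lambda^*)X^*H^2X = 0$; the injectivity of left multiplication by $HX$ combined with the invertibility of $X^*H^2X$ then forces $\tilde\Lambda = \tilde\Lambda^*$, so $\tilde\Lambda$ is Hermitian, and the argument of the previous paragraph applies to give $\nabla\mF = 0$.

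\textbf{Main obstacle.} The delicate step will be the reverse implication, since $D = 0$ silently bundles two facts — that $G$ lies in the range of $HX$ and that the would-be multiplier $K^{-1}X^*G$ is Hermitian — and both must be extracted in turn. Everything rests on the full column rank of $HX$, which is a purely algebraic consequence of $K \succ 0$; without it, neither $X^*H^2X$ would be invertible nor would left multiplication by $HX$ be injective, and the cancellations above would break down.
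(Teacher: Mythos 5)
Your proof is correct, and for the equivalence $\nabla\mF=0\Leftrightarrow D=0$ it takes a genuinely different route from the paper. The paper proves the reverse implication with a single trace identity: writing $D=(GX^*H-HXG^*)HX$, it computes $\langle G,D\rangle=\langle GX^*H,\,GX^*H-HXG^*\rangle=\frac12\|GX^*H-HXG^*\|_{\msF}^2$, so $D=0$ forces $GX^*H=HXG^*$, and then $\nabla\mF=(GX^*H-HXG^*)XK^{-1}=0$ immediately. This is shorter, needs no rank or invertibility argument, and the same identity is reused verbatim in Lemma~\ref{lemma:generalupdate}(ii) to bound $\mF_\tau'(Y(0))$, so it does double duty. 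Your argument instead extracts the two structural facts hidden in $D=0$ one at a time — first that $G=HXK^{-1}X^*G$ by cancelling the invertible Gram matrix $X^*H^2X$, then that the induced multiplier $K^{-1}X^*G$ is Hermitian by the injectivity of left multiplication by $HX$ — which is more mechanical but makes the mechanism transparent; the price is that you must invoke the full column rank of $HX$ (correctly justified from $K\succ0$), which the paper's computation avoids entirely. Your forward direction and your derivation of the first-order condition via the Lagrangian (which the paper dismisses as "easily verified") are both sound; the only small elision is that in the forward direction of the equivalence you should note explicitly that $\nabla\mF=0$ alone already forces $\Lambda=G^*XK^{-1}$ to be Hermitian (since $G=HX\Lambda$ gives $\Lambda=K^{-1}X^*G=\Lambda^*$), as local minimality is not a hypothesis there.
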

\begin{proof}
  \rev{The first statement can be easily verified. For the second one, we first assume that $\nabla \mF = 0$.  Then, we know that  $G = HX K^{-1} X^* G.$ \rrev{Substituting $HXK^{-1} X^*G$ for the second $G$ in} the definition of $D$ yields}
  \be
  \rev{D = (G - HXG^* XK^{-1}) X^*H^2X  = \nabla \mF X^*H^2X = 0. }\nn
  \ee
  \rev{Now we assume that $D = 0$. Noting that $D = (GX^*H - HXG^*) HX$, we can obtain
  \be \label{equ:extension:descent}
 \langle G, D \rangle =  \langle GX^*H, GX^*H - HXG^* \rangle = \frac12 \|GX^*H - HXG^*\|_{\msF}^2,
\ee
which with $D = 0$ means that $GX^*H - HXG^* = 0$. Noting that $\nabla \mF= (GX^*H - HXG^*) XK^{-1}$, we can see that
$\nabla \mF =0$. The proof is completed.
}
 $\smartqed$
\end{proof}

Similar to the analysis in \S\ref{section:update}, we can give the following feasible update scheme for \reff{prob:GeneralStiefel}.
 \begin{lemma} \label{lemma:generalupdate}
   For any feasible point $X$ with $X^*HX=K$, \brev{let} $W = -(I_n - XK^{-1}X^*H)\brev{D}$ and
   $J(\tau)=K + \frac{\tau^2}{4}W^*HW  + \frac{1}{2}\tau X^* H \brev{D}$.  Consider the curve given by
 \be \label{equ:generalupdate2}
  Y(\tau)=(2X + \tau W)J^{-1} K- X.
 \ee
Then we have that
\begin{enumerate}
  \item[\emph{(i)}] $Y(\tau)^* H Y(\tau)= K;$
  \item[\emph{(ii)}] $Y'(0) = -D$ is a descent direction and  $\mF'_{\tau}(Y(0)) \leq -\rev{\frac12 \|XK^{-1}\|_2^{-2}} \|\nabla \mF\|_{\msF}^{\rev{2}};$
  \item[\emph{(iii)}] \rev{$\emph{cond}(J) \leq   \frac{5 +  \upsilon^2}{4} \emph{cond}(K)$, where $\upsilon = \frac{\tau \|H^{\frac12} D\|_{\msF}^{}}{\sqrt{\|K\|_2^{}}}$}.
 \end{enumerate}
\end{lemma}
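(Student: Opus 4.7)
The plan is to parallel the proof of Lemma~\ref{lemma:update} by tracking how the matrices $H$ and $K$ enter, and to establish the three claims in turn after recording two preliminary identities: $X^*HW=0$ and that $X^*HD$ is skew-Hermitian. The first holds because $X^*H(I_n-XK^{-1}X^*H) = X^*H-KK^{-1}X^*H = 0$, so $X^*HW = -X^*H(I_n - XK^{-1}X^*H)D = 0$. The second follows from $D=GX^*H^2X-HXG^*HX$: direct computation gives $(X^*HD)^*=-X^*HD$.

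For (i), I would expand $Y^*HY$ using $Z = 2X+\tau W$, so $Y = ZJ^{-1}K - X$. The two preliminary identities yield $J+J^* = 2K+\tfrac{\tau^2}{2}W^*HW$ and $Z^*HZ = 4K+\tau^2 W^*HW = 2(J+J^*)$, together with $X^*HZ = 2K$. Substituting these in and simplifying gives $Y^*HY = KJ^{-*}\cdot 2(J+J^*)J^{-1}K - 2KJ^{-*}K - 2KJ^{-1}K + K = 2KJ^{-*}K + 2KJ^{-1}K - 2KJ^{-*}K - 2KJ^{-1}K + K = K$. For (ii), differentiating $Y(\tau)$ at $\tau=0$ with $J(0)=K$ and $J'(0)=\tfrac{1}{2}X^*HD$ gives $Y'(0) = W - XK^{-1}X^*HD = -D$, since $W = -D + XK^{-1}X^*HD$. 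The chain rule then yields $\mF'_\tau(Y(0)) = -\langle G, D\rangle$, which by \reff{equ:extension:descent} equals $-\tfrac{1}{2}\|GX^*H - HXG^*\|_\msF^2$. Since $\nabla\mF = (GX^*H - HXG^*)XK^{-1}$, submultiplicativity gives $\|GX^*H-HXG^*\|_\msF \ge \|\nabla\mF\|_\msF/\|XK^{-1}\|_2$, which produces the claimed lower bound on the descent.

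For (iii), the key structural observation is that $P \coloneqq I_n - XK^{-1}X^*H$ is idempotent and self-adjoint in the $H$-inner product (because $P^*H = HP$). Consequently, writing $D = -W + (I-P)D$ produces an $H$-orthogonal decomposition $\|H^{1/2}D\|_\msF^2 = \|H^{1/2}W\|_\msF^2 + \|H^{1/2}(I-P)D\|_\msF^2$. Combined with $X^*HD = X^*H(I-P)D$ and $\|H^{1/2}X\|_2 = \sqrt{\|K\|_2}$, this gives
\begin{equation*}
\|J\|_2 \le \|K\|_2 + \tfrac{\tau^2}{4}\|H^{1/2}W\|_\msF^2 + \tfrac{\tau}{2}\sqrt{\|K\|_2}\,\|H^{1/2}(I-P)D\|_\msF.
\end{equation*}
Dividing by $\|K\|_2$ and setting $s = \|H^{1/2}(I-P)D\|_\msF/\|H^{1/2}D\|_\msF \in [0,1]$, the right-hand side becomes $1 + \upsilon^2(1-s^2)/4 + \upsilon s/2$, whose maximum over $[0,1]$ is $(5+\upsilon^2)/4$ by the same elementary calculus argument used in Lemma~\ref{lemma:update}(iv). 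For the dual bound, I would whiten: the identity $X^*HY = 2KJ^{-1}K - K$ (from (i)) rewrites as $2K^{1/2}J^{-1}K^{1/2} = I + \tilde X^*\tilde Y$, where $\tilde X = H^{1/2}XK^{-1/2}$ and $\tilde Y = H^{1/2}YK^{-1/2}$ both have orthonormal columns; hence $\|K^{1/2}J^{-1}K^{1/2}\|_2 \le 1$, i.e., $\|J^{-1}\|_2 \le 1/\lambda_{\min}(K)$. Multiplying the two bounds yields $\mathrm{cond}(J) \le \mathrm{cond}(K)(5+\upsilon^2)/4$.

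The main obstacle is avoiding the naive route in (iii): it is tempting to factor $J = K^{1/2}\tilde J K^{1/2}$ and apply Lemma~\ref{lemma:update}(iv) verbatim, but the resulting parameter $\tau\|H^{1/2}DK^{-1/2}\|_\msF$ only satisfies $\tilde\upsilon^2 \le \upsilon^2\,\mathrm{cond}(K)$, which loses an extra factor of $\mathrm{cond}(K)$ in the final estimate. The successful approach must therefore bound $\|J\|_2$ and $\|J^{-1}\|_2$ by separate arguments—$H$-orthogonality of $P$ for the former, $K$-whitening of $X$ and $Y$ for the latter—so that only a single factor of $\mathrm{cond}(K)$ enters.
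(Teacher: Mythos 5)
Your proposal is correct and follows essentially the same route as the paper's proof: the same preliminary identities ($X^*HW=0$, $X^*HD$ skew-Hermitian), the same computation $Y'(0)=-D$ combined with \reff{equ:extension:descent} and submultiplicativity for (ii), and for (iii) the same orthogonal splitting $\|H^{1/2}D\|_{\msF}^2=\|H^{1/2}W\|_{\msF}^2+\|K^{-1/2}X^*HD\|_{\msF}^2$ (which you phrase via the $H$-self-adjoint projector $P$), the same maximization over $s\in[0,1]$, and the same bound $\|J^{-1}\|_2\le\|K^{-1}\|_2$ obtained from $2J^{-1}=K^{-1}(X^*HY)K^{-1}+K^{-1}$ by whitening with $K^{1/2}$. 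The only cosmetic difference is that you verify (i) by fully expanding $Y^*HY$ where the paper leaves it as an easy check.
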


\begin{proof}
  Since $X^* H \brev{D}$ is skew-Hermitian, $J$ is invertible \rrev{and hence $Y(\tau)$ is well-defined.} \rrev{Rewriting $Y(\tau) = X(2J^{-1}K - I_p) + \tau WJ^{-1}K$ and noting $X^* HW = 0$,  we can easily verify (i).} Moreover, it is not difficulty to verify that $Y'(0) = -D$.
  \rev{It follows from  $\nabla \mF = (GX^*H - HXG^*)XK^{-1}$ that}
\begin{equation}
 \|\nabla \mF\|_{\msF}^{}  \leq \rev{\|XK^{-1}\|_2^{}}\|GX^*H - HXG^*\|_{\msF}^{}.
\nn
\end{equation}
\rev{Combing the above inequality with  \reff{equ:extension:descent},  $\mF_{\tau}'(Y(0)) =\langle G, Y'(0) \rangle$ and $Y'(0)= - D$,} we have that
$$\mF'_{\tau}(Y(0)) =  \rrev{-\frac12 \|GX^*H - HXG^*\|_{\msF}^2} \leq - \rev{\frac12 \|XK^{-1}\|_2^{-2}} \|\nabla \mF\|_{\msF}^{2}.$$
So (ii) is true.

For (iii), \rev{it follows from $D = -W + XK^{-1} X^* HD$ and $X^*HW = 0$ that $D^*HD = W^*HW + D^*HXK^{-1}X^*HD$. Thus there holds}
\be
\rev{ \|H^{\frac12} D\|_{\msF}^2 = \|H^{\frac12} W\|_{\msF}^2 + \|K^{-\frac12} X^*HD\|_{\msF}^2.} \nn
\ee
\rrev{Meanwhile, it is easy to see that
  $$\|J\|_2  \leq  \|K\|_2^{}  + \frac{\tau^2}{4}\|H^{\frac12}W\|_{\msF}^2 + \frac{\tau}{2}\|K^{\frac12}\|_{2}\|K^{-\frac12} X^*HD\|_{\msF}^{}.$$}
  \rrev{Plugging $\tau = \frac{\sqrt{\|K\|_2}}{\|H^{\frac12}D\|_{\msF}}v$ into the above equality yields}
\begin{align}\label{equ:generalupdate:a1}
  \rev{\|J\|_2} &  \rev{\leq \|K\|_2 + \frac{\|K\|_2 \upsilon^2}{4} \cdot \frac{\|H^{\frac12}W\|_{\msF}^2}{\|H^{\frac12}D\|_{\msF}^2} + \frac{\|K\|_2\upsilon}{2} \cdot \frac{\|K^{-\frac12} X^*HD\|_{\msF}}{\|H^{\frac12}D\|_{\msF}}} \nn \\
  & \rev{\leq \|K\|_2 + \|K\|_2\cdot \max_{0 \leq t \leq 1} \left( \frac{\upsilon^2}{4} ( 1 - t^2) +  \frac{\upsilon}{2} t \right)} \nn \\
  & \rev{\leq \frac{5 + \upsilon^2}{4} \|K\|_2.}
\end{align}
\rev{On the other hand, we see from \reff{equ:generalupdate2}, $X^*HW=0$ and $X^*HX = K$ that  $2J^{-1} = K^{-1}(X^*HY)K^{-1} + K^{-1}$, which indicates that $\|J^{-1}\|_2 \leq \|K^{-1}\|_2$. Combing this with \reff{equ:generalupdate:a1} gives (iii). }
The proof is completed.
$\smartqed$
\end{proof}

\rev{ }With the help of the above lemma, we can design the AFBB method for problem \reff{prob:GeneralStiefel} similar to Algorithm \ref{algorithm:afbb}.  Global convergence results can also be established.

\rev{Finally, we point out that our scheme and algorithm can naturally be extended to \reff{prob:MultiOrthoStiefel},} since the variables $X_1, \ldots, X_p$ are separated in the constraints.

\section{Numerical Results}\label{section:numericaltests}
In this section, we present numerical results  on a variety of problems to illustrate the  efficiency of our AFBB method. We implemented AFBB in Matlab R2012a. All  experiments were performed  in  Matlab under a Linux operating system  on a  Thinkpad T420 Laptop with an Intel$^\circledR$ dual core CPU at 2.60GHz $\times$ 2 and  4GB of  RAM.

\subsection{Stopping criteria}\label{subsection:stopping}
Define $\text{tol}_k^x\coloneqq  \frac{\|X_k - X_{k-1}\|_{\msF}^{}}{\sqrt{n}}$ and $\text{tol}_k^f: = \frac{|\mF_k - \mF_{k+1}|}{|\mF_k| + 1}$. We  terminate  the algorithm if one of the following holds: (i) $k\geq \text{MaxIter}$; (ii) $\|\Drhok\|_{\msF}^{} \leq \epsilon \|D_{\rho,0}\|_{\msF}^{}$; (iii) $\text{tol}_k^x  \leq \epsilon_x$ and $\text{tol}_k^f  \leq \epsilon_f$; (iv)  $\mathrm{mean}([\text{tol}_{k - \min\!\mathrm{\{}k,T\mathrm{\}} + 1}^x,$ $\ldots, \text{tol}_k^x ]) \leq 10 \epsilon_x$ and  $\text{mean}([\text{tol}_{k - \min\!\mathrm{\{}k,T\mathrm{\}} + 1}^x, \ldots, \text{tol}_k^x ]) \leq 10 \epsilon_f,$ where $\text{MaxIter}$ is the maximal iteration number. These criteria are the same as those used in \cite{wen2013feasible} except that we replace $\|\nabla \mF_k\|_{\msF}^{} \leq \epsilon$ \brev{therein}  by criterion (ii) here.
Unless otherwise specified, we set $\epsilon = 10^{-5}$, $\epsilon_x = 10^{-5}$, $\epsilon_f = 10^{-8}$, $T = 5$ \brev{and}  $\text{MaxIter} = 3000$.

The other parameters for AFBB  are given as follows:
$$ \rho = \rev{0.25}, \  \sigma = 0.5, \ \delta = 0.001, \ \epsilon_{\min}= \rev{10^{-8}}, \ \epsilon_{\max}= \rev{10^{8}}, \ \Delta = \rev{10^{10}}, \ L = 3.$$
The initial trial stepsize $\tau_0^{(1)}$ at the first iteration is chosen to be  $0.5\,\|D_{\rho,0}\|_{\msF}^{-1}$.  The ABB stepsize  \reff{equ:Stepsize:ABB}  is used for computing  the trial stepsize  $\tau_k^{(0)}$. The ``tic-toc'' command in Matlab is used
to obtain the CPU time in seconds elapsed by each code. \rev{After the iterations, if $\|X^{\msT} X - I_p\|_{\msF}^{} \geq 10^{-14}$, a re-orthogonality procedure will be performed to enhance the feasibility.}

\subsection{Nearest low-rank correlation matrix problem}
Given  $C \in \mathcal{S}^n$  and a nonnegative weight matrix $H \in \mathcal{S}^n$, the nearest low-rank correlation matrix problem is given by
\be \label{prob:NLCM}
\min_{X \in \mathcal{S}^n}\,   \frac{1}{2}\|H\odot (X-C)\|_{\msF}^2, \  \ \mbox{s.t.} \  \  \mbox{diag}(X) = e,\  \ \mbox{rank}(X) \leq r,\ \ X \succeq 0,
\ee
where $\odot$ is the Hadama product operator of two matrices, $e\in \mbR^n$ is the vector with all ones,  $r<n$ is a given positive integer number. A usual weight is  $H=\textbf{1}$, where $\textbf{1} \in \mbR^{n \times n}$  represents the matrix with all ones.

  To deal with the nonconvex rank constraints $\mbox{rank}(X) \leq r$, as used in the geometric optimization method  \cite{grubisic2007efficient}, majorization method \cite{pietersz2004rank}, and trigonometric parametrization method  \cite{Rebonato1999most},  we rewrite $X = V^{\msT}V $ with $V = [V_1, \ldots, V_n] \in \mbR^{r \times n}$. Consequently, we get the equivalent formulation of \reff{prob:NLCM} as follows:
 \be \label{prob:NLCM2}
 \min_{V \in \mbR^{r \times n}}  \theta(V; H, C)\coloneqq \frac{1}{2}\|H\odot (V^{\msT}V-C)\|_{\msF}^2, \ \ \mbox{s.t.}\ \  \|V_i\|_2 = 1,\ \  i = 1, \ldots, n,
\ee
which is the  minimization of a quartic \brev{polynomial} over spheres. Among many  approaches to solve problem \reff{prob:NLCM},  we  compared our algorithm with several state-of-the-art methods; \emph{i.e.}, the majorized penalty approach  (PenCorr\footnote{It can be downloaded from \url{http://www.math.nus.edu.sg/~matsundf/#Codes}.}) \cite{gao2010majorized}, the sequential semismooth Newton  method (SemiNewton) \cite{li2011sequential},  and the  Wen-Yin nonmonotone BB method (OptM\footnote{It can be downloaded from \url{http://optman.blogs.rice.edu/}.}) proposed in \cite{wen2013feasible}. For more comprehensive literature reviews,  see  \cite{gao2010majorized,li2011sequential}.

We chose the initial point of problem \reff{prob:NLCM2} in the same way as in the majorization method. Specifically, we selected the  modified PCA \cite{flury1988common} of $C$; {\it i.e.,} $C_{\brev{\mathrm{pca}}}$, to be  the initial point in AFBB or OptM. Let $C$ have the eigenvalue decomposition $C = P\Diag(\lambda_1, \ldots, \lambda_n) P^{\msT},$ where $P^{\msT}P = I_n$ and $\lambda_1 \geq \cdots \geq \lambda_n.$ Define $\Lambda_r = \text{Diag}(\lambda_1, \ldots, \lambda_r)$ and denote by  $P_1$ the first $r$ columns of $P$. Then the $i$-th column of $C_{\brev{\mathrm{pca}}}$ is $[C_{\brev{\mathrm{pca}}}]_{(i)} =z_i/\|z_i\|$,  where $z_i = P_1 [\Lambda_r^{1/2}]_{(i)},\, i = 1, \ldots, r. $
In case of $\lambda_i \leq 0$ for some $1\leq i \leq r$, we first solved the following problem:
\begin{displaymath}
    \widetilde{C} = \argmin\limits_{X \in \mathcal{S}^n} \  \frac{1}{2}\|X-C\|_{\msF}^2, \  \ \mbox{s.t.}  \ \  \mbox{diag}(X) = e,\ \  X \succ 0
  \end{displaymath}
  \rev{by the  semismooth Newton method \cite{qi2007quadratically} and then} set the initial point as $\widetilde{C}_{\brev{\mathrm{pca}}}.$ \rev{Note that SemiNewton and PenCorr also have their own efficient ways to generate good initial points.}

We list the test problems  from \cite{gao2010majorized, jiang2012inexact, li2010inexact, li2011sequential, pietersz2004rank}  as follows.
\rev{For the case when $H \ne \textbf{1}$, we may need to use the incomplete $C$. More specifically, the entries of $C$, corresponding to the nonzero weights, will be set to be zeros.}

{Ex. 1}\upcite{li2011sequential}: The matrix $C$ is the $387 \times 387$ one-day correlation matrix (as of Oct. 10, 2008) from the lagged datasets of RiskMetrics.\footnote{Dr. Qingna Li provided us this matrix kindly.}

{Ex. 2}\upcite{pietersz2004rank}:  $H = \textbf{1}, n = 500$, the entries
$$C_{ij} = \exp\left(-\gamma_1 |i-j| - \frac{\gamma_2 |t_i - t_j|}{\max\{i,j\}^{\gamma_3}  - \gamma_4 |\sqrt{i} - \sqrt{j}\,|}\right)$$
for $i,j = 1, \ldots, n$ with $\gamma_1 = 0, \gamma_2 = 0.480, \gamma_3 = 1.511, \gamma_4 = 0.186.$

{Ex. 3}\upcite{gao2010majorized}: $n = 500$, the entries $C_{ij} = 0.5 + 0.5 e^{-0.05|i-j|}$ for $i,j = 1, \ldots, n$. The weight matrix $H$ is either $\textbf{1}$ or a random matrix whose entries are uniformly distributed in $[0.1, 10]$ except for $200$ entries in $[0.01, 100]$.

{Ex. 4}\upcite{gao2010majorized}: $n = 943, C$ is based on $100,000$ ratings for $1682$ movies by $943$ users from Movielens data sets. It can be download from \url{http://www.grouplens.org/node/73}. The weight matrix $H$ is either $\textbf{1}$ or the one provided by T. Fushiki at Institute   of  Statistical Mathematics in Japan.

{Ex. 5}\,-\,9\upcite{jiang2012inexact, li2010inexact}: We consider the five gene correlation matrices $\widehat{C}$: Lymph, ER, Arabidopsis, Leukemia and Hereditary bc. For the sake of comparison, as done in \cite{jiang2012inexact}, we perturb $\widehat{C}$  to $$C = (1 - \gamma) \widehat{C} + \gamma F,$$ where $\gamma = 0.05$ and $F$ is a random symmetric matrix with entries  uniformly distributed in $[-1, 1]$.
The corresponding weight matrix $H$ is either $\textbf{1}$ or the one created by  Example 2 in \cite{jiang2012inexact}.

\begin{table}[!htbp]
\linespread{1.5}
  \centering
  \caption{\rev{Numerical results of  the nearest low-rank correlation matrix problem  with $H = \textbf{1}$: SemiNewton, PenCorr}} \label{table:NLCMpart1}
~\\
  \begin{scriptsize}
    \revmini{\begin{tabular}{@{}l@{\hspace{4.3mm}}r@{\hspace{4.3mm}}r@{\hspace{4.3mm}}r@{\hspace{4.3mm}}r@{\hspace{0.3mm}}r@{\hspace{5mm}}r@{\hspace{4.3mm}}r@{\hspace{4.3mm}}r@{\hspace{4.3mm}}r@{}}
      \Xhline{0.6pt}
      \Gape[8pt]      &  \multicolumn{4}{c}{SemiNewton} &&  \multicolumn{4}{c}{PenCorr} \\
      \Xcline{2-5}{0.6pt} \Xcline{7-10}{0.6pt}
  \Gape[8pt] r& nlcmres$^0$ & nlcmres\,  &  time & feasi\, && nlcmres$^0$ &   nlcmres\,  &  time & feasi\,    \\
  \Xhline{1.1pt}
\multicolumn{10}{@{} l @{} }{       {Ex. 1},  $\ n =  387$} \\
  2 & 1.653e02 & 1.630e02  &  3.6 & 4e-15 &&  1.653e02 &  1.623e02 & 12.3 & 1e-08  \\
  5 & 7.026e01 & 6.157e01  &  2.2 & 4e-15 &&  7.026e01 &  6.111e01 &  8.1 & 5e-08  \\
 20 & 8.641e00 & 6.087e00  &  1.5 & 5e-15 &&  8.641e00 &  6.066e00 &  2.3 & 6e-07  \\
 40 & 1.335e00 & 7.765e-01  &  1.3 & 7e-15 &&  1.335e00 &  7.768e-01 &  1.1 & 2e-08  \\
 80 & 5.889e-02 & 3.228e-02  &  1.2 & 9e-15 &&  5.889e-02 &  3.262e-02 &  0.7 & 1e-08  \\
\multicolumn{10}{@{} l @{} }{       {Ex. 4},  $\ n =  943$} \\
  5 & 4.410e02 & 4.136e02  & 18.8 & 6e-15 &&  4.410e02 &  4.128e02 & 73.6 & 6e-08  \\
 20 & 2.943e02 & 2.888e02  & 22.7 & 7e-15 &&  2.943e02 &  2.887e02 & 36.4 & 3e-08  \\
 50 & 2.765e02 & 2.773e02  & 26.6 & 1e-14 &&  2.765e02 &  2.763e02 & 18.7 & 8e-08  \\
 100 & 2.758e02 & 2.761e02  & 26.6 & 1e-14 &&  2.758e02 &  2.758e02 &  4.6 & 2e-08  \\
 200 & 2.758e02 & 2.758e02  & 20.1 & 1e-14 &&  2.758e02 &  2.758e02 &  4.5 & 2e-08  \\
 250 & 2.758e02 & 2.758e02  & 27.3 & 1e-14 &&  2.758e02 &  2.758e02 &  4.5 & 2e-08  \\
\multicolumn{10}{@{} l @{} }{ {Ex. 8}, Leukemia,  $\ n = 1255$} \\
  5 & 3.918e02 & 3.317e02  & 45.4 & 7e-15 &&  3.918e02 &  3.309e02 & 140.6 & 2e-08  \\
 20 & 1.554e02 & 1.055e02  & 32.6 & 8e-15 &&  1.554e02 &  1.055e02 & 87.0 & 2e-07  \\
 50 & 6.529e01 & 4.473e01  & 29.2 & 1e-14 &&  6.529e01 &  4.473e01 & 33.4 & 4e-07  \\
 100 & 3.937e01 & 3.274e01  & 21.3 & 2e-14 &&  3.937e01 &  3.274e01 & 28.9 & 4e-08  \\
 200 & 3.163e01 & 3.099e01  & 35.7 & 2e-14 &&  3.163e01 &  3.095e01 & 12.3 & 7e-08  \\
 400 & 3.078e01 & 3.078e01  & 29.3 & 3e-14 &&  3.078e01 &  3.078e01 &  7.6 & 2e-07  \\
\multicolumn{10}{@{} l @{} }{{Ex. 9}, Hereditary bc,  $\ n = 1869$} \\
  5 & 4.657e02 & 4.361e02  & 118.7 & 8e-15 &&  4.657e02 &  4.357e02 & 342.8 & 6e-08  \\
 20 & 7.194e01 & 6.426e01  & 104.2 & 1e-14 &&  7.194e01 &  6.425e01 & 163.0 & 9e-08  \\
 50 & 5.823e01 & 5.200e01  & 79.1 & 2e-14 &&  5.823e01 &  5.199e01 & 112.1 & 8e-08  \\
 100 & 5.311e01 & 5.026e01  & 92.5 & 2e-14 &&  5.311e01 &  5.026e01 & 109.2 & 3e-08  \\
 200 & 4.990e01 & 4.989e01  & 110.9 & 3e-14 &&  4.990e01 &  4.970e01 & 36.5 & 6e-07  \\
 400 & 4.966e01 & 4.966e01  & 27.6 & 1e-09 &&  4.966e01 &  4.966e01 & 23.7 & 4e-09  \\
 \Xhline{0.6pt}
  \end{tabular}}
\end{scriptsize}
\end{table}
\begin{table}[!htbp]
\linespread{1.6}
  \centering
  \caption{\rev{Numerical results of  the nearest low-rank correlation matrix problem  with $H = \textbf{1}$:  OptM, AFBB, AFBB-rand$X_0$}} \label{table:NLCMpart1_2}
~\\
 \begin{scriptsize}
   \revmini{ \begin{tabular}{ @{}l@{\hspace{1.2mm}}r@{\hspace{1.2mm}}r@{\hspace{1.2mm}}r@{\hspace{1.2mm}}r@{\hspace{1.2mm}}r@{\hspace{.3mm}}r@{\hspace{3mm}}r@{\hspace{1.2mm}}r@{\hspace{1.2mm}}r@{\hspace{1.2mm}}r@{\hspace{1.2mm}}r@{\hspace{0.3mm}}r@{\hspace{3mm}}r@{\hspace{1.2mm}}r@{\hspace{1.2mm}}r@{\hspace{1.2mm}}r@{\hspace{1.2mm}}r@{}}
      \Xhline{0.6pt}
      \Gape[8pt]   &\multicolumn{5}{c}{OptM} &&\multicolumn{5}{c}{AFBB}&&\multicolumn{5}{c}{AFBB-rand$X_0$}\\
      \Xcline{2-6}{0.6pt} \Xcline{8-12}{0.6pt} \Xcline{14-18}{0.6pt}
   \Gape[8pt]  r & nlcmres$^0$ & nlcmresi\,  &  time & feasi\,  & nfge &&nlcmres$^0$ & nlcmres\,  & time & feasi\,  & nfge &&nlcmres$^0$ & nlcmres\, & time & feasi\,  & nfge\\
   \Xhline{1.1pt}
\multicolumn{18}{@{} l @{} }{       {Ex. 1},  $\ n =  387$} \\
   2 & 1.653e02 & 1.624e02  &  0.1 & 3e-15 &    31  &&  1.653e02 &  1.624e02 &  0.1 & 3e-15 &    32  &&  3.265e02 &  1.636e02 &  0.3 & 3e-15 &    88  \\
   5 & 7.026e01 & 6.109e01  &  0.1 & 3e-15 &    90  &&  7.026e01 &  6.109e01 &  0.1 & 3e-15 &    80  &&  2.995e02 &  6.109e01 &  0.2 & 3e-15 &   114  \\
  20 & 8.641e00 & 6.063e00  &  0.2 & 3e-15 &   145  &&  8.641e00 &  6.063e00 &  0.2 & 3e-15 &   125  &&  2.902e02 &  6.063e00 &  0.4 & 4e-15 &   273  \\
  40 & 1.335e00 & 7.756e-01  &  0.4 & 4e-15 &   166  &&  1.335e00 &  7.759e-01 &  0.3 & 3e-15 &   151  &&  2.896e02 &  7.755e-01 &  0.9 & 5e-15 &   478  \\
  80 & 5.887e-02 & 3.302e-02  &  0.3 & 3e-15 &    79  &&  5.887e-02 &  3.266e-02 &  0.3 & 3e-15 &    85  &&  2.895e02 &  4.705e-02 &  2.6 & 6e-15 &   876  \\
\multicolumn{18}{@{} l @{} }{       {Ex. 4},  $\ n =  943$} \\
   5 & 4.359e02 & 4.128e02  &  1.0 & 5e-15 &   120  &&  4.359e02 &  4.128e02 &  0.8 & 5e-15 &   100  &&  7.282e02 &  4.128e02 &  1.0 & 5e-15 &   147  \\
  20 & 2.930e02 & 2.887e02  &  2.3 & 6e-15 &   340  &&  2.930e02 &  2.887e02 &  2.2 & 6e-15 &   321  &&  7.083e02 &  2.887e02 &  1.8 & 7e-15 &   244  \\
  50 & 2.843e02 & 2.763e02  &  3.0 & 7e-15 &   252  &&  2.843e02 &  2.763e02 &  2.4 & 7e-15 &   209  &&  7.031e02 &  2.763e02 &  2.7 & 9e-15 &   243  \\
  100 & 2.948e02 & 2.758e02  &  3.3 & 7e-15 &   125  &&  2.948e02 &  2.758e02 &  2.3 & 7e-15 &   105  &&  7.023e02 &  2.758e02 &  2.8 & 1e-14 &   131  \\
  200 & 3.088e02 & 2.758e02  &  4.4 & 7e-15 &   109  &&  3.088e02 &  2.758e02 &  3.7 & 7e-15 &    93  &&  7.013e02 &  2.758e02 &  4.7 & 2e-14 &   109  \\
  250 & 3.129e02 & 2.758e02  &  5.8 & 7e-15 &   109  &&  3.129e02 &  2.758e02 &  4.4 & 7e-15 &    83  &&  7.012e02 &  2.758e02 &  5.6 & 2e-14 &   113  \\
\multicolumn{18}{@{} l @{} }{ {Ex. 8}, Leukemia,  $\ n = 1255$} \\
   5 & 3.906e02 & 3.309e02  &  1.6 & 6e-15 &   101  &&  3.906e02 &  3.309e02 &  1.6 & 6e-15 &    97  &&  9.105e02 &  3.309e02 &  1.7 & 6e-15 &   116  \\
  20 & 1.506e02 & 1.055e02  &  2.4 & 7e-15 &   164  &&  1.506e02 &  1.055e02 &  2.1 & 7e-15 &   137  &&  8.789e02 &  1.055e02 &  4.1 & 7e-15 &   360  \\
  50 & 5.703e01 & 4.473e01  &  2.1 & 7e-15 &    91  &&  5.703e01 &  4.473e01 &  1.8 & 7e-15 &    69  &&  8.740e02 &  4.473e01 &  5.9 & 1e-14 &   368  \\
  100 & 3.291e01 & 3.273e01  &  3.9 & 7e-15 &   118  &&  3.291e01 &  3.273e01 &  4.1 & 7e-15 &   119  &&  8.712e02 &  3.274e01 & 14.5 & 1e-14 &   557  \\
  200 & 4.583e01 & 3.095e01  &  8.0 & 7e-15 &   143  &&  4.583e01 &  3.095e01 &  5.7 & 7e-15 &    99  &&  8.706e02 &  3.095e01 & 23.8 & 2e-14 &   455  \\
  400 & 7.580e01 & 3.078e01  & 21.7 & 8e-15 &   188  &&  7.580e01 &  3.078e01 & 22.9 & 7e-15 &   207  &&  8.704e02 &  3.078e01 & 30.2 & 2e-14 &   266  \\
\multicolumn{18}{@{} l @{} }{{Ex. 9},  Hereditary bc,  $\ n = 1869$} \\
   5 & 4.651e02 & 4.357e02  &  3.5 & 7e-15 &    56  &&  4.651e02 &  4.357e02 &  3.7 & 7e-15 &    63  &&  1.362e03 &  4.358e02 &  4.5 & 7e-15 &   101  \\
  20 & 6.936e01 & 6.425e01  &  3.9 & 8e-15 &    64  &&  6.936e01 &  6.425e01 &  3.9 & 8e-15 &    65  &&  1.317e03 &  6.425e01 &  5.6 & 9e-15 &   141  \\
  50 & 5.256e01 & 5.199e01  &  7.0 & 8e-15 &   138  &&  5.256e01 &  5.199e01 &  7.5 & 8e-15 &   148  &&  1.309e03 &  5.200e01 & 16.5 & 1e-14 &   440  \\
  100 & 5.506e01 & 5.026e01  &  8.3 & 8e-15 &   113  &&  5.506e01 &  5.026e01 &  7.7 & 8e-15 &   103  &&  1.305e03 &  5.027e01 & 26.0 & 1e-14 &   473  \\
  200 & 8.384e01 & 4.970e01  & 13.7 & 8e-15 &   114  &&  8.384e01 &  4.970e01 & 11.8 & 8e-15 &    99  &&  1.304e03 &  4.970e01 & 31.9 & 2e-14 &   321  \\
  400 & 1.345e02 & 4.966e01  & 50.4 & 8e-15 &   250  &&  1.345e02 &  4.966e01 & 42.4 & 7e-15 &   211  &&  1.304e03 &  4.966e01 & 57.2 & 2e-14 &   288  \\
 \Xhline{0.6pt}
  \end{tabular}}
\end{scriptsize}
\end{table}

In our implementation, all the parameters of each solver  were set to be their default values except the termination criteria of OptM were changed  to  the ones in \S\ref{subsection:stopping}. The numerical results are reported in Tables \ref{table:NLCMpart1} -- \ref{table:NLCMpart2}. \revmini{To save space, we only report some representative results. For more details, one can see \cite{Daijiang2012}.} In \brev{these} tables, \rev{``nlcmres$^0$'' and ``nlcmres'' represent the initial and returned residual $\|H\odot (X-C)\|_{\msF}^{}$, respectively.} The terms  ``time'' and ``feasi''  denote the CPU time  and the violation of $\mbox{diag}(X) = e$, respectively, and  ``nfge''  stands for the total number of function and gradient evaluations. Note that CPU time of  OptM or AFBB includes the CPU time for generating the initial point by the modified PCA.  \rev{To further show the efficiency of our approach, we considered  to implement  AFBB starting from a random $X_0$, denoted by ``AFBB-rand$X_0$''. \revmini{The results of AFBB-rand$X_0$ for $H = \textbf{1}$ are reported} in the last five columns of Table  \ref{table:NLCMpart1_2}. }

From the results,  we know that  AFBB performs better than OptM in terms of the residual, the CPU time and the number of function and gradient evaluations. Again,  in the case when $H = \textbf{1}$,  AFBB not only  runs considerably faster than  SemiNewton and PenCorr,  but also always  find a better solution in terms of the residual except for the problem with large $r$; in the case when $H \ne \textbf{1}$, our AFBB shows great advantage over PenCorr in terms of the solution quality and CPU time.  \rev{Besides, although AFBB-rand$X_0$ performs worse than AFBB, it still can solve the problem in a reasonable time and is comparable with SemiNewton and PenCorr.} Apart from the ABB stepsize, another reason for the efficiency of AFBB is its low complexity cost per iteration. The dominated cost of AFBB at each iteration is computing  the function value and the gradient in \reff{prob:NLCM}. For special $H = \textbf{1}$ and general $H$, the costs  are  $2n^2r + 3nr^2$ and $3n^2r$, respectively. In contrast,  SemiNewton and PenCorr need to solve iteratively a series of least squares correlation matrix problems without the rank constraint, whose cost is very expensive.

\begin{table}[!htbp]
\linespread{1.6}
  \centering
  \caption{Numerical results of the nearest low-rank correlation matrix problem with $H \ne \textbf{1}$: PenCorr, OptM,  AFBB} \label{table:NLCMpart2}
  ~\\
  \begin{scriptsize}
    \revmini{\begin{tabular}{ @{}l@{\hspace{1.2mm}}r@{\hspace{1.2mm}}r@{\hspace{1.2mm}}r@{\hspace{1.2mm}}r@{\hspace{0.3mm}}r@{\hspace{3.5mm}}r@{\hspace{1.2mm}}r@{\hspace{1.2mm}}r@{\hspace{1.2mm}}r@{\hspace{1.2mm}}r@{\hspace{0.3mm}}r@{\hspace{3.5mm}}r@{\hspace{1.2mm}}r@{\hspace{1.2mm}}r@{\hspace{1.2mm}}r@{\hspace{1.2mm}}r@{} }
\Xhline{0.6pt}
\Gape[8pt]   &  \multicolumn{4}{c}{PenCorr} &&  \multicolumn{5}{c}{OptM} &&  \multicolumn{5}{c}{AFBB}\\
\Xcline{2-5}{0.6pt} \Xcline{7-11}{0.6pt} \Xcline{13-17}{0.6pt}
\Gape[8pt]   r & nlcmres$^0$ & nlcmres\,&  time & feasi\, && nlcmres$^0$ &   nlcmres\, &  time & feasi\,  & nfge  && nlcmres$^0$ &  nlcmres\,  &  time & feasi\,  & nfge\\
\Xhline{1.1pt}
   \multicolumn{17}{@{} l @{} }{       {Ex. 3}, $\ n =  500$, random  $H$} \\
  2 &  1.198e03  & 9.122e02 & 89.6 &  3e-09 &&   1.198e03  & 9.120e02 &  0.4 &  3e-15&    50 &&  1.198e03  & 9.120e02 & 0.5 &  4e-15 &   48  \\
  5 &  7.944e02  & 4.541e02 & 53.3 &  8e-09 &&   7.944e02  & 4.539e02 &  1.5 &  3e-15&   201 &&  7.944e02  & 4.539e02 & 0.7 &  3e-15 &   85  \\
 20 &  2.293e02  & 8.812e01 & 38.8 &  3e-07 &&   2.293e02  & 8.809e01 &  1.6 &  4e-15&   185 &&  2.293e02  & 8.809e01 & 1.3 &  4e-15 &  156  \\
 50 &  8.607e01  & 2.195e01 & 48.4 &  9e-07 &&   8.607e01  & 2.189e01 &  7.7 &  4e-15&   787 &&  8.607e01  & 2.189e01 & 7.1 &  4e-15 &  731  \\
 100 &  4.084e01  & 6.445e00 & 55.5 &  3e-07 &&   4.084e01  & 6.295e00 &  38.5 &  4e-15&  3116 &&  4.084e01  & 6.289e00 & 38.1 &  4e-15 & 3011  \\
 125 &  3.184e01  & 4.057e00 & 73.9 &  9e-07 &&   3.184e01  & 3.958e00 &  43.8 &  4e-15&  2948 &&  3.184e01  & 3.939e00 & 42.7 &  4e-15 & 3008  \\
\multicolumn{17}{@{} l@{} }{       {Ex. 4}, $\ n =  943$, $H$ is given by T.  Fushiki} \\
  5 &  1.590e04  & 1.148e04 & 1188.4 &  6e-07 &&   1.541e04  & 1.139e04 &  31.2 &  5e-15&  1525 &&  1.541e04  & 1.139e04 & 39.7 &  5e-15 & 1951  \\
 20 &  7.722e03  & 5.219e03 & 1072.5 &  3e-07 &&   8.005e03  & 5.194e03 &  17.4 &  6e-15&   748 &&  8.005e03  & 5.195e03 & 18.8 &  6e-15 &  821  \\
 50 &  5.211e03  & 3.719e03 & 706.5 &  2e-09 &&   6.126e03  & 3.712e03 &  14.6 &  7e-15&   533 &&  6.126e03  & 3.712e03 & 16.3 &  7e-15 &  539  \\
 100 &  4.844e03  & 3.507e03 & 412.0 &  3e-07 &&   6.443e03  & 3.503e03 &  14.4 &  7e-15&   338 &&  6.443e03  & 3.503e03 & 11.6 &  7e-15 &  269  \\
 200 &  4.844e03  & 3.505e03 & 378.5 &  3e-07 &&   7.621e03  & 3.501e03 &  16.9 &  8e-15&   318 &&  7.621e03  & 3.501e03 & 13.8 &  8e-15 &  275  \\
 250 &  4.844e03  & 3.505e03 & 372.2 &  3e-07 &&   8.023e03  & 3.501e03 &  16.7 &  8e-15&   273 &&  8.023e03  & 3.501e03 & 18.5 &  8e-15 &  324  \\
\multicolumn{17}{@{} l @{} }{{Ex. 8},   Leukemia, $\ n = 1255$, $H$ is generated by Example 2 in \cite{jiang2012inexact}} \\
  5 &  8.545e04  & 7.148e04  &44049.6 &5e-07 &&   8.517e04  & 7.133e04 &  71.3 &  6e-15&  1572 &&  8.517e04  & 7.133e04 & 16.5 &  6e-15 &  352  \\
 20 &  3.320e04  & 2.203e04 & 22685.8 &  1e-07 &&   3.218e04  & 2.200e04 &  52.4 &  7e-15&  1026 &&  3.218e04  & 2.200e04 & 12.6 &  6e-15 &  221  \\
 50 &  1.384e04  & 8.846e03 & 14046.2 &  8e-08 &&   1.225e04  & 8.838e03 &  42.1 &  9e-15&   703 &&  1.225e04  & 8.835e03 & 19.7 &  7e-15 &  332  \\
 100 &  7.615e03  & 6.245e03 & 6607.0 &  1e-07 &&   7.182e03  & 6.243e03 &  32.1 &  7e-15&   470 &&  7.182e03  & 6.242e03 & 30.4 &  7e-15 &  451  \\
 200 &  6.056e03  & 5.914e03 & 517.6 &  7e-09 &&   9.945e03  & 5.917e03 &  102.5 &  1e-14&  1089 &&  9.945e03  & 5.915e03 & 56.8 &  8e-15 &  611  \\
 400 &  6.045e03  & 5.912e03 & 159.9 &  2e-07 &&   1.637e04  & 5.916e03 &  165.1 &  1e-14&  1109 &&  1.637e04  & 5.913e03 & 77.4 &  8e-15 &  529  \\
\multicolumn{17}{@{} l @{} }{{Ex. 9}, Hereditary bc, $\ n = 1869$, $H$ is generated by Example 2 in \cite{jiang2012inexact}} \\
  5 &  1.007e05  & 9.399e04 & 89093.5 &  2e-07 &&   1.006e05  & 9.384e04 &  192.9 &  7e-15&  1725 &&  1.006e05  & 9.384e04 & 33.3 &  7e-15 &  276  \\
 20 &  1.580e04  & 1.381e04 & 51569.0 &  9e-07 &&   1.518e04  & 1.380e04 &  77.4 &  8e-15&   622 &&  1.518e04  & 1.379e04 & 20.8 &  8e-15 &  145  \\
 50 &  1.197e04  & 1.069e04 & 55899.2 &  1e-07 &&   1.144e04  & 1.068e04 &  57.1 &  8e-15&   437 &&  1.144e04  & 1.068e04 & 50.7 &  8e-15 &  385  \\
 100 &  1.065e04  & 1.030e04 & 25289.8 &  1e-07 &&   1.200e04  & 1.029e04 &  127.9 &  9e-15&   785 &&  1.200e04  & 1.028e04 & 67.9 &  8e-15 &  409  \\
 200 &  1.032e04  & 1.022e04 & 4350.6 &  5e-07 &&   1.824e04  & 1.023e04 &  178.4 &  1e-14&   896 &&  1.824e04  & 1.022e04 & 77.8 &  8e-15 &  387  \\
 400 &  1.032e04  & 1.022e04 & 303.6 &  1e-07 &&   2.920e04  & 1.023e04 &  344.4 &  1e-14&  1135 &&  2.920e04  & 1.022e04 & 116.5 &  9e-15 &  385  \\  	
 \Xhline{0.6pt}
  \end{tabular}}
\end{scriptsize}
\end{table}

\rev{Here we should notice that PenCorr is a general and powerful package since it can deal with more
general constraints such as lower and upper bound constraints.  While it is not known yet how to extend AFBB efficiently in this situation,
we shall consider a possible extension of the problem \reff{prob:NLCM} with some given entries of the matrix $X$. More exactly, it is required that
$X_{ij} = q_{ij}$ for $(i,j)\in \mathcal{B}_e$, where $\mathcal{B}_e$ is the subset of $\{(i,j) | 1 \leq j < i \leq n\}$ satisfying
$-1 \leq \rrev{q}_{ij} \leq 1$ for any $(i,j) \in \mathcal{B}_e $. Using the decomposition $X = V^{\msT}V$, the problem is now equivalent to
\be \label{prob:extNLCM2}
\begin{array}{cl}
  \min\limits_{V \in \mbR^{r \times n}} &\theta(V; H, C)  \\
\mbox{s.t.} &  \|V_i\|_2 = 1,\, i = 1, \cdots, n, \\
& V_i^{\msT}V_j - q_{ij}  = 0, \ (i,j) \in \mathcal{B}_e.
\end{array}
\ee
To deal with the extra nonlinear constraints, we introduce the augmented Lagrangian function (see \cite{andreani2007augmented, nocedal2006numerical, sunyuan2006optimization}) of \reff{prob:extNLCM2} as follows:
\be
    \rrev{L_{\mu}(V, \Lambda) =
    \theta(V; H, C) + \frac{\mu}{2}\theta\big(V; H_e, \widehat{C} + \Lambda/\mu\big)}
    \nn
 \ee
where $\mu>0$ is the penalty parameter,  $\Lambda$, $H_e$ and $\widehat{C}$ are matrices in $\mbR^{n\times n}$ with zero entries for all $(i,j)\notin \mathcal{B}_e$.
Starting from the initial point  $C_{\mathrm{pca}}$ or $\widetilde{C}_{\mathrm{pca}}$, $\Lambda_0 = 0$,  \revmini{$\mu_0 = 10$},  the procedure  of the augmented Lagrangian method is  as follows:
 \be\label{equ:agnlnlcm}
 \left\{
 \begin{array}{ccl}
   V_{k+1} &:\approx& \argmin\limits_{V \in \mbR^{r \times n}} \  L_{\mu_k}(V, \Lambda_k), \ \ \mbox{s.t.}     \ \  \|V_i\|_2 = 1,\, i = 1, \cdots,      n, \\
   \Lambda_{k+1}&:= &\Lambda_k - \mu_k H_e \odot  (V_k^{\msT}V_k^{} - \rrev{\widehat{C}_k}), \\
 \mu_{k+1} &:= &10 \mu_k.
 \end{array}
 \right.
 \ee
 The subproblem in \reff{equ:agnlnlcm} is a low-rank nearest correlation matrix problem and is solved inexactly by the AFBB method. Specifically, for the $k$-th subproblem, the main parameters of AFBB are set to be
 $$\epsilon_k = \max\{0.1\epsilon_{k-1}, 10^{-5}\}, \, \epsilon_{x,k} = \max\{0.1\epsilon_{x, k-1}, 10^{-5}\}, \, \epsilon_{f,k} = \max\{0.1\epsilon_{f,k-1}, 10^{-8}\}, \, \text{MaxIter}_k = 2000,$$
 where $\epsilon_0 = 10^{-1}, \epsilon_{x,0} = 10^{-3}, \epsilon_{f,0} = 10^{-5}.$ Denoting  $\nu_k = \sum_{(i,j) \in \mathcal{B}_e} |V_{k,i}^{\msT} \rrev{V_{k,j}^{}} - q_{i,j}|$, we terminate the procedure \reff{equ:agnlnlcm} when $\nu_{k+1} \leq 3\times 10^{-8}$ or $|\nu_{k+1}  - \nu_k| \leq 10^{-8}$. }

Below we consider a test instance of problem \reff{prob:NLCM2} with extra fixed constraints.

{Ex. 10}: The matrices  $C, H$  are  the same as the ones in {Ex. 4}. The index set $\mathcal{B}_e$ consists of $\min\{n_e, n - i\}$ randomly generated  integers from $\{1, \ldots, n\}$ with $n_e = 3$. We set $q_{ij} = 0$ for $(i,j) \in \mathcal{B}_e$.

\rev{The numerical results are presented in Table \ref{table:extNLCM}, where ``const.'' represents the total constraint violation $\nu_k$. From the table,
we can see that the AFBB method is quite promising, which shows its potential to handle some more general constraints beyond the sphere constraints. As the augment Lagrangian method belongs to a different class of methods, however, we shall go further on this topic elsewhere. }

\begin{table}[!htbp]
\linespread{1.5}
     \centering
     \revmini{
\caption{Numerical results of the nearest low-rank correlation matrix problem with equality requirements: PenCorr, AFBB} \label{table:extNLCM}
~\\[0.6pt]
\begin{scriptsize}
  \begin{tabular}{@{}l r r r r r  r r r r r @{}}
   \Xhline{0.6pt}
   \Gape[8pt] &  \multicolumn{4}{c}{PenCorr} &&  \multicolumn{5}{c}{AFBB} \\
  \Xcline{2-5}{0.6pt} \Xcline{7-11}{0.6pt}
  \Gape[8pt] r & residual  &  time & feasi&    const. &&  residual & time & feasi & const. & nfge \\
\Xhline{1.1pt}
\multicolumn{11}{@{}l@{}}{{Ex. 10}, $n = 943$,  $H = \textbf{1}$} \\
10 & 3.533e02 & 1082.1 & 2e-08 &  2e-07 &&   3.529e02 &  73.1 & 6e-15 &  2e-08  &2678 \\
50 & 2.841e02 &  93.8 & 4e-08 &  9e-08  &&   2.841e02 &  15.1 & 6e-15 &  5e-08  &446 \\
100 & 2.827e02 &  36.3 & 1e-07 &  7e-08 &&   2.827e02 &  16.9 & 7e-15 &  6e-09  &398 \\
200 & 2.827e02 &  23.4 & 9e-09 &  6e-09 &&   2.827e02 &  24.0 & 7e-15 &  2e-08  &379 \\
250 & 2.827e02 &  23.5 & 9e-09 &  6e-09 &&   2.827e02 &  23.6 & 7e-15 &  2e-08  &355 \\
\multicolumn{11}{@{}l@{}}{{Ex. 10}, $n = 943, H$ given by T. Fushiki}  \\
10 & 8.306e03 & 5292.7 & 3e-08 &  5e-07 &&   8.238e03 & 200.9 & 6e-15 &  1e-08  &6221 \\
50 & 4.171e03 & 1540.5 & 2e-08 &  1e-08 &&   4.165e03 &  63.5 & 7e-15 &  3e-09  &1658 \\
100 & 3.939e03 & 1012.4 & 1e-08 &  9e-09 &&   3.936e03 &  55.0 & 7e-15 &  1e-08  &1189 \\
200 & 3.936e03 & 905.5 & 2e-08 &  1e-08 &&   3.933e03 &  60.4 & 7e-15 &  2e-08  &1154 \\
250 & 3.936e03 & 917.7 & 2e-08 &  1e-08 &&   3.933e03 &  67.1 & 8e-15 &  2e-08  &1148 \\
\Xhline{0.6pt}
\end{tabular}
\end{scriptsize}
}
\end{table}

\subsection{Kohn-Sham total energy minimization}
We test in this subsection a class of nonlinear eigenvalue problems known as Kohn-Sham (KS) equations, which arise in electronic structure calculations. The original KS equation or KS energy minimization problem is a continuous nonlinear problem. To solve the problems numerically, we  turn them into finite-dimensional problems. Let the unitary  matrix $X \in \mathbb{C}^{n\times p}$ be the approximation of the electronic wave functions of $p$ occupied states, where $n$ is the spatial degrees of freedom. Using the planewave basis, we define the finite-dimensional approximation to the total energy functional as follows:
\be\label{eqution:totalKS}
E_{\text{total}}(X) = \mtr\bigg(X^*\Big(\frac{1}{2}\widehat{L} + V_{\text{ion}}\Big)X\bigg) + \frac{1}{2} {\rho(X)}^{\msT} \widehat{L}^{\dagger} \rho(X) + \rho(X)^{\msT} \epsilon_{\mathrm{xc}}({\rho(X)}) + E_{\text{Ewald}} + E_{\text{rep}},
\ee
where $\widehat{L}$ is a finite-dimensional representation of the Laplacian operator in the planewave basis, $V_{\text{ion}}$ denotes the ionic pseudopotentials sampled on the suitably chosen Cartesian grid, ${\rho(X)} = \text{diag}(XX^*)$ represents  the charge density, the matrix  $\widehat{L}^{\dagger}$ means the pseudo-inverse of $\widehat{L}$, and $\epsilon_{\mathrm{xc}}({\rho(X)})$ is  the exchange-correlation energy per particle in a uniform electron gas of density ${\rho(X)}$.
The last two terms in \reff{eqution:totalKS} are constant shifts of the total energy (see \cite{yang2009kssolv} for the details). Consequently, the discretized KS energy minimization can be formulated as
\be \label{prob:KS}
\min_{X \in \mathbb{C}^{n \times p}} E_{\text{total}}(X) ,\ \ \mbox{s.t.} \ \   X^*X=I_p.
\ee
For the above problem, the KKT conditions; {\it i.e.,} the KS equations, are
\be \label{prob:ksequ}
  H(X) X - X \Lambda_p = 0, \ \  X^*X = I_p.
\ee
Here, $H(X) = \frac{1}{2} \widehat{L} + V_{\text{ion}} + \text{Diag}(\widehat{L}^{\dagger} {\rho(X)}) + \text{Diag}(\mu_{\mathrm{xc}}({\rho(X)}))$, $\mu_{\mathrm{xc}}({\rho(X)}) = d \mu_{\mathrm{xc}}({\rho(X)})/d {\rho(X)}$ and $\Lambda_p$ is a $p$-by-$p$ symmetric matrix of Lagrangian multipliers.

Generically, it is sophisticated to compute the objective function and its gradient in \reff{prob:KS}. Thus, we use the Matlab Toolbox KSSOLV \cite{yang2009kssolv} which is tailored for easily developing new algorithms for solving the KS equations to do so. In order to show the efficiency of our AFBB method, we compared it with the self-consistent field (SCF) iteration which is currently the most widely used approach for the KS equations, this SCF iteration is provided  in KSSOLV. We also compared AFBB with OptM \cite{wen2013feasible} for KS problem \reff{prob:KS} but only the perfomance of AFBB is reported because they performed similarly. The maximal iteration of SCF was set to be $200$ while the other parameters were set to be  their default values in KSSOLV. For the sake of fairness, we improved the stopping accuracy of  AFBB; {\it i.e.}, resetting $\epsilon = 10^{-6}, \epsilon_x = 10^{-10}, \epsilon_f = 10^{-14}, \text{MaxIter} = 1000$, to obtain a higher quality solution.  The termination rules are not directly comparable due to the different formulations of the problem used by SCF and AFBB. Specifically, SCF focuses on  KS equations  \reff{prob:ksequ} and needs to solve a series of linear eigenvalue problems, while AFBB minimizes the total energy directly.  However, as shown later  by the numerical results in Table \ref{table:KS}, we see that  on average the chosen stopping criteria  for AFBB are tighter than those of SCF in terms of the residual $\|HX - H(X^*HX)\|_{\msF}^{\hspace{0.1mm}}$.  For each problem, we ran the two algorithms $10$ times from different random initial points generated by the function ``genX0'' provided in KSSOLV. \rev{Note that for each instance, AFBB and SCF use the same initial point.}

\begin{table}[!htbp]
\linespread{1.5}
  \centering
  \caption{Numerical results of the Kohn-Sham total energy minimization: SCF, AFBB}\label{table:KS}
  ~\\
  \begin{scriptsize}
    \revmini{  \begin{tabular}{@{\hspace{0.1mm}}  lcccrccr @{\hspace{0.1mm}} }
      \Xhline{0.6pt}
      \Gape[8pt] solver&  a.$E_{\text{total}}^0$ &  a.$E_{\text{total}}$  & a.resi  &a.iter&a.feasi&a.err&a.time\\
      \Xhline{1.1pt}
  \multicolumn{8}{@{\hspace{0.1mm}}l@{\hspace{0.1mm}}}{   alanine, $n = 12671, p =   18$} \\
SCF & -6.078e01 &-6.116e01  & 3e-06 & 48.7   &2e-14   &  2e-13&  174.5  \\
AFBB & -6.078e01 & -6.116e01    & 4e-07 & 77.8   &5e-15  &  0e00  &  137.8  \\ 
\multicolumn{8}{@{\hspace{0.1mm}}l@{\hspace{0.1mm}}}{        al, $n = 16879, p =   12$} \\
SCF & -1.576e01 &  -1.577e01  & 8e-03 & 200.0   &6e-15   &  8e-04& 1490.6  \\
AFBB & -1.576e01 &-1.580e01    & 7e-05 & 986.5   &6e-13  &  0e00  & 1212.1  \\ 
\multicolumn{8}{@{\hspace{0.1mm}}l@{\hspace{0.1mm}}}{   benzene, $n = 8407, p =   15$} \\
SCF & -3.693e01 &  -3.723e01  & 2e-06 & 35.1   &1e-13   &  8e-14&   70.3  \\
AFBB & -3.693e01 &-3.723e01    & 4e-07 & 70.4   &4e-15  &  0e00  &   56.5  \\ 
  \multicolumn{8}{@{\hspace{0.1mm}}l@{\hspace{0.1mm}}}{    c12h26, $n = 5709, p =   37$} \\
SCF & -8.073e01 &-8.154e01    & 3e-06 & 69.1   &4e-14  &  2e-13  &  216.6  \\
AFBB & -8.073e01 &-8.154e01    & 6e-07 & 78.9   &1e-14  &  0e00  &  129.7  \\ 
\multicolumn{8}{@{\hspace{0.1mm}}l@{\hspace{0.1mm}}}{  ctube661, $n = 12599, p =   48$} \\
SCF & -1.340e02 & -1.346e02  & 3e-06 & 50.9   &4e-14   &  5e-14&  535.6  \\
AFBB & -1.340e02 &-1.346e02    & 6e-07 & 84.6   &8e-14  &  0e00  &  377.7  \\ 
\multicolumn{8}{@{\hspace{0.1mm}}l@{\hspace{0.1mm}}}{ glutamine, $n = 16517, p =   29$} \\
SCF & -9.087e01 & -9.184e01  & 3e-06 & 49.2   &3e-14   &  8e-14&  436.8  \\
AFBB & -9.087e01 &-9.184e01    & 7e-07 & 96.2   &9e-14  &  0e00  &  373.4  \\ 
\multicolumn{8}{@{\hspace{0.1mm}}l@{\hspace{0.1mm}}}{graphene16, $n = 3071, p =   37$} \\
SCF & -9.358e01 & -9.400e01  & 6e-03 & 200.0   &2e-14   &  5e-04& 1058.8  \\
AFBB & -9.358e01 &-9.405e01    & 4e-06 & 219.5   &4e-13  &  0e00  &  176.2  \\ 
\multicolumn{8}{@{\hspace{0.1mm}}l@{\hspace{0.1mm}}}{graphene30, $n = 12279, p =   67$} \\
SCF & -1.726e+02 & -1.735e+02  & 9e-03 & 200.0   &3e-14   &  3e-04& 9225.4  \\
AFBB & -1.726e+02 &-1.736e+02    & 6e-07 & 294.2   &2e-14  &  0e00  & 2007.3  \\ 
\multicolumn{8}{@{\hspace{0.1mm}}l@{\hspace{0.1mm}}}{pentacene, $n =  44791, p =    51$} \\
SCF & -1.311e+02 & -1.319e+02  & 4e-06 & 65.9   &5e-14   &  5e-12& 3253.1  \\
AFBB & -1.311e+02 &-1.319e+02    & 7e-07 & 118.7   &2e-14  &  0e00  & 2046.5  \\ 
  \multicolumn{8}{@{\hspace{0.1mm}}l@{\hspace{0.1mm}}}{     ptnio, $n = 4069, p =   43$} \\
SCF & -1.983e02 & -2.268e02  & 8e-06 & 200.0   &2e-14   &  2e-09&  946.7  \\
AFBB & -1.983e02 &  -2.268e02    & 4e-06 & 459.6   &1e-14  &  0e00  &  480.2  \\ 
  \multicolumn{8}{@{\hspace{0.1mm}}l@{\hspace{0.1mm}}}{      qdot, $n = 2103, p =    8$} \\
SCF & 2.850e01 & 2.771e01  & 2e-02 & 200.0   &4e-15   &  2e-04&  106.5  \\
AFBB & 2.850e01 &2.770e01   & 3e-04 & 1000.0   &3e-15  &  0e00  &   81.2  \\ 
\Xhline{0.6pt}
 \end{tabular}
}
\end{scriptsize}
\end{table}

A  summary of the numerical results \revmini{on $11$ standard testing problems} is reported in Table \ref{table:KS}. In this table, \rev{``a.$E_{\text{total}^{}}^0$'' and ``a.$E_{\text{total}}$''} \rev{represent the average initial and returned} total energy function value, \brev{respectively.}  \brev{The term \brev{``a.iter''} denotes the average} total number of iterations, \brev{``a.resi''}, \brev{``a.feasi''}  and  \brev{``a.time''} the average residual  $\|HX  - X (X^*HX)\|_{\msF}^{}$, the average violation of the constraint $X^*X = I_p$ and  the average CPU time in seconds, respectively. We use \brev{``a.err''} to denote  the average relative errors between the average total energy $\bar{z}_{1}$ given by AFBB  or the  average total energy $\bar{z}_{2}$ given by SCF and the minimal of $\bar{z}_1$ and $\bar{z}_2$, which is computed by $\frac{\bar{z}_1 - \min\{\bar{z}_1, \bar{z}_2\}}{|\min\{\bar{z}_1, \bar{z}_2\}|}$ or $\frac{\bar{z}_2 - \min\{\bar{z}_1, \bar{z}_2\}}{|\min\{\bar{z}_1, \bar{z}_2\}|}$.  From the table, we can see that the AFBB method is considerably competitive and it can always take less CPU time than SCF to find the better solutions in terms of the total energy and the residual, especially for the large molecules. In particular, for the most hard problem ``graphene30'' in our test, AFBB is not only  significantly faster than SCF, but also  returns a better solution with smaller total energy and residual.

\subsection{Maximization of sums of heterogeneous quadratic functions on the Stiefel manifold from statistics}\label{subsection:hetequadratic}
In \cite{Balogh2004some}, Balogh {\it et al.} gave a test problem with known global optimal solutions  as follows:
    \be \label{prob:knowproblem}
    \min_{X \in \mathbb{R}^{n \times p}}\, \sum_{i=1}^{p}X_{(i)}^{\msT}A_i^{} X_{(i)}, \ \  \mbox{s.t.} \ \  X^{\msT}X=I_p,
     \ee
     where for $1 \leq i \leq p$,  $A_i = \mbox{\brev{Diag}}\big(n(i-1)+1,\, \ldots,\,n(i-1)+i-1,\, l_i,\,n(i-1)+i+1,\,\ldots,\,ni\big)$
       and $l_i  < 0$.  This is a special  maximization of sums of heterogeneous quadratic functions on the Stiefel manifold from statistics
       (it was also considered in \cite{bolla1998extrema}).
     By Proposition 1 in \cite{Balogh2004some}, we know that
       $ \{(\pm e_1, \pm e_2, \ldots, \pm e_p):  \pm e_i \in \{e_i, -e_i\}\}$
is the set of minimum points of  problem (\ref{prob:knowproblem}) and  $\sum_{i=1}^k l_i$ is the optimal function value.   It was pointed out in \cite{Balogh2004some} that  there were no efficient numerical methods to solve problem \reff{prob:knowproblem} yet. Nevertheless, our numerical tests show that the AFBB method works well.

In this experiment, we reset $\epsilon = 10^{-6}, \epsilon_x = 10^{-6}, \epsilon_f = 10^{-10}$ and  fixed  $n=4000$. For $1\leq p\leq n$ and $1\leq i \leq p$, we  generated $l_i$ in two ways, one is that  $l_i$ is uniformly distributed in $[0,\,1]$, the other is that  $l_i = -1$. We ran our AFBB  methods  $50$ times from different random initial points for each test.

 Firstly, we investigate the effect of using different descent directions. We call AFBB methods using the  descent directions  $D_{1/2}$ and $D_{1/4}$ as AFBB$D_{1/2}$ and AFBB$D_{1/4}$, respectively. The numerical results  are shown in Table \ref{table:knownproblem}. In this table,
 \rev{``a.obj$^0$'' and ``a.obj'' represent the average initial funciton value and returned function value by each method, respectively.  The terms $f^*$, ``a.nfe'' and ``a.err''} denote the optimal function value, the average total number of function evaluations and  the average relative error between the function value given by each method and $f^*$, respectively. We use  \brev{``a.s.ratio''} to stand for  the average saved ratio of AFBB$D_{1/4}$ which  is computed as  $100 (\mbox{nfe}_{\rho = 1/4} - \mbox{nfe}_{\rho = 1/2})/\mbox{nfe}_{\rho = 1/2}$. From this table, we know that the two methods can always find nearly global solutions in acceptable iterations. Averagely, AFBB$D_{1/4}$  can always find a better solution with smaller function value about 25\% faster than AFBB$D_{1/2}$ for most of the tests. This  may be  due to the fact that $D_{1/4}$ is the steepest descent direction corresponding to the \emph{Euclidean metric}, that is,
$$D_{1/4} = {\argmin\limits_{D \in \mT_X} \, -\frac{\langle G, D\rangle}{\|D\|_{\msF}^{}}. }$$

\begin{table}[!tbp]
\linespread{1.5}
  \centering
  \caption{Numerical results of AFBB$D_{1/2}$ and AFBB$D_{1/4}$ for  problem \reff{prob:knowproblem}} \label{table:knownproblem}
  ~\\
  \begin{scriptsize}
\revmini{
\begin{tabular}{@{}l@{\hspace{2.8mm}}c@{\hspace{2.8mm}}c@{\hspace{1.8mm}}c@{\hspace{3mm}}c@{\hspace{2.8mm}}c@{\hspace{2.8mm}}c@{\hspace{.5mm}}c@{\hspace{6mm}}c@{\hspace{2.8mm}}c@{\hspace{2.8mm}}c@{\hspace{2.8mm}}r@{}}
\Xhline{0.6pt}
\Gape[8pt] & \multicolumn{3}{c}{} &\multicolumn{3}{c}{AFBB$D_{1/2}$} &&\multicolumn{4}{c}{AFBB$D_{1/4}$} \\
\Xcline{5-7}{0.6pt} \Xcline{9-12}{0.6pt}
\Gape[8pt] p &a.obj$^0$ &$f^*$ &&  a.obj & a.nfe & a.err &&  a.obj & a.nfe & a.err & a.s.ratio \\ \Xhline{1.1pt}
\multicolumn{12}{@{}l@{}}{random $l$} \\
2  & 8.003e03 &  -1.304e00 &&  -1.304e00  &  403.9 &  4e-07  &&   -1.304e00  &   405.9  &  4e-07  &  0.5  \\
20  & 7.998e05 &  -8.902e00 &&  -8.902e00  &  839.8 &  7e-07  &&   -8.902e00  &   615.4  &  4e-07  & -26.7  \\
60  & 7.198e06 &  -3.096e01 &&  -3.096e01  &  962.8 &  8e-07  &&   -3.096e01  &   692.0  &  4e-07  & -28.1  \\
100  & 2.000e07 &  -5.087e01 &&  -5.087e01  &  1032.5 &  1e-06  &&   -5.087e01  &   714.9  &  4e-07  & -30.8  \\  
\multicolumn{12}{@{}l@{}}{$l_i = -1, i = 1, \ldots,p$} \\
2  & 7.997e03 &  -2.000e00 &&  -2.000e00  &  391.1 &  3e-07  &&   -2.000e00  &   397.6  &  2e-07  &  1.7  \\
20  & 7.998e05 &  -2.000e01 &&  -2.000e01  &  822.7 &  1e-06  &&   -2.000e01  &   597.2  &  4e-07  & -27.4  \\
60  & 7.198e06 &  -6.000e01 &&  -6.000e01  &  890.7 &  4e-06  &&   -6.000e01  &   645.6  &  4e-07  & -27.5  \\
100  & 2.000e07 &  -1.000e02 &&  -1.000e02  &  942.0 &  8e-07  &&   -1.000e02  &   696.2  &  4e-07  & -26.1  \\
 \Xhline{0.6pt}
  \end{tabular}}
\end{scriptsize}
\end{table}

Secondly, we consider the effect of using different {functions} $g(\tau)$ in forming $J(\tau)$. \rev{Here, we choose $\rho = 0.5$.} We tested two choices: $g_1(\tau) = \tau/2$  which is the default in update scheme \reff{equ:update:new} and $g_2(\tau) = \frac{1}{2}\tau e^{-\tau}$. We call AFBB methods using $g_1(\tau)$ and $g_2(\tau)$ as AFBBg1 and AFBBg2, respectively.  From Table \ref{table:knownproblem:gtau}, we see that  AFBBg2 improves the performance of AFBBg1  by 15\% or more for most tests in terms of  the average total number of function evaluations. Meanwhile, it  can always  return a soultion with smaller function value. Nevertheless, it remains under investigation how to seek a  better $g(\tau)$.

At the end of this subsection, we remark that $X^{\msT}G \equiv G^{\msT}X$ happens  in  the nearest low-rank correlation matrix problem and the Kohn-Sham total energy minimization. In this case, the term $X^{\msT}D_{\rho}$ vanishes and  $g(\tau)$ will not play a role in forming $J(\tau)$.

  \begin{table}[!ht]
\linespread{1.5}
  \centering
  \caption{Numerical results of AFBBg1 and AFBBg2 for  problem \reff{prob:knowproblem}} \label{table:knownproblem:gtau}
  ~\\
  \revmini{
  \begin{scriptsize}
    \begin{tabular}{@{}l@{\hspace{2.8mm}}c@{\hspace{2.8mm}}c@{\hspace{1.8mm}}c@{\hspace{3mm}}c@{\hspace{2.8mm}}c@{\hspace{2.8mm}}c@{\hspace{.5mm}}c@{\hspace{6mm}}c@{\hspace{2.8mm}}c@{\hspace{2.8mm}}c@{\hspace{2.8mm}}r@{} }
      \Xhline{0.6pt}
 \Gape[8pt] &\multicolumn{3}{c}{} &\multicolumn{3}{c}{AFBBg1} &&\multicolumn{4}{c}{AFBBg2} \\
    \Xcline{5-7}{0.6pt} \Xcline{9-12}{0.6pt}
    \Gape[8pt] p &a.obj$^0$ &$f^*$ &&  a.obj & a.nfe & a.err &&  a.obj & a.nfe & a.err & a.s.ratio \\ \Xhline{1.1pt}
    \multicolumn{12}{@{} l @{} }{random $l$} \\
 2  &  8.004e03  &  -5.653e-01  & &  -5.653e-01  &  409.1 &  1e-06  &&   -5.653e-01  &   436.4  &  7e-07  &  6.7  \\
  20  &  7.998e05  &  -1.054e01  & &  -1.054e01  &  839.7 &  7e-07  &&   -1.054e01  &   710.3  &  3e-07  & -15.4  \\
  60  &  7.198e06  &  -2.790e01  & &  -2.790e01  &  941.3 &  8e-07  &&   -2.790e01  &   789.1  &  6e-07  & -16.2  \\
100  &  2.000e07  &  -4.158e01  & &  -4.158e01  &  1039.8 &  8e-07  &&   -4.158e01  &   856.0  &  4e-07  & -17.7  \\
\multicolumn{12}{@{} l @{} }{$l_i = -1, i = 1, \ldots,p$} \\
2  &  7.998e03  &  -2.000e00  & &  -2.000e00  &  401.8 &  2e-07  &&   -2.000e00  &   402.2  &  3e-07  &  0.1  \\
20  &  7.998e05  &  -2.000e01  & &  -2.000e01  &  816.5 &  6e-07  &&   -2.000e01  &   717.6  &  3e-07  & -12.1  \\
60  &  7.198e06  &  -6.000e01  & &  -6.000e01  &  931.1 &  1e-06  &&   -6.000e01  &   741.4  &  3e-07  & -20.4  \\
100  &  2.000e07  &  -1.000e02  & &  -1.000e02  &  987.5 &  2e-06  &&   -1.000e02  &   869.4  &  3e-07  & -12.0  \\
 \Xhline{0.6pt}
  \end{tabular}
\end{scriptsize}
}
\end{table}

\section{Conclusion}
In this paper, we have proposed a  feasible method for optimization on the Stiefel manifold. Our main contributions are twofold.
Firstly, we proposed a new framework of constraint preserving update schemes for optimization on the  Stiefel manifold by decomposing each feasible point into the range space of $X$ and the null space of $X^{\msT}$. While this new framework can unify many existing schemes, we also investigated a new update scheme with low complexity. Note that our framework can be viewed as a retraction as well. Secondly, we proposed the adaptive feasible Barzilai-Borwein-like method and proved  its global convergence. To our knowledge,  this result is the  first global convergence result for the feasible  method with nonmonotone linesearch for optimization on the  Stiefel manifold. Moreover, the corresponding extension to the  generalized Stiefel manifold was also considered.

We have tested our AFBB method  on a variety of problems  to illustrate its efficiency. Particularly, for  the nearest low-rank correlation matrix problem, AFBB performs better than \lastrev{three} state-of-the-art algorithms. \rev{Note that \lastrev{PenCorr}, one of the three algorithms, can deal with more general problems.} For Kohn-Sham total energy minimization,  \lastrev{the superiority of AFBB} is obvious especially for large molecules, and hence it is quite promising to use our AFBB for large-scale electronic structure calculations. Since our update scheme \lastrev{is compatible with} moving along any given tangent direction, we  also explore the effect of different descent directions and different $g(\tau)$'s on the performance of AFBB.

As our framework can unify several famous retractions, it is natural and  interesting to argue  which one can make the AFBB method find the global optimal solution with highest probability and at the fastest speed. One possible approach is to consider  the subspace techniques. This remains under investigation.
\section*{Acknowledgements}
{ \wuhao
  \renewcommand\baselinestretch{1.0}\selectfont
  We are very grateful to Zaiwen Wen for useful discussions on the optimization on the Stiefel manifold and  Qingna Li for discussions on the nearest  low-rank correlation matrix problem. We thank Zaiwen Wen and Wotao Yin for sharing their code FOptM,  Houduo Qi and  Qingna Li for sharing their code SemiNewton, Defeng Sun and  Yan Gao for sharing their code PenCorr.  We also thank  Kaifeng Jiang, Defeng Sun and  Kim-Chuan Toh for sharing the gene correlation matrices for the nearest correlation matrix problem. \rev{Many thanks are also due to the \revmini{associate editor},  two anonymous referees and the editor, Prof. Alexander Shapiro, whose suggestions and comments greatly improved the quality of this paper.}
  \begin{appendices}
    \section{Details of Approach I and II in \S 3}\label{appendix:section:approach12}
 \subsection{Details of Approach I}\label{appendix:subsection:approach1}
 \revmini{Using the condition that $Z(\tau) \equiv I_p$ and $\wR'(0) = - X^{\msT} E$ and denoting} $\mA(\tau) = \left[\begin{array}{c} \wR(\tau) \\ W \wN(\tau) \end{array}\right]$, \revmini{it follows from } \reff{equ:condition1} and \reff{equ:condition2} that
  \be
  \mA'(0) = \left[\begin{array}{cc} -X^{\msT}E &~B \\ W & ~F \end{array}\right] \left[\ba{c} I_p \\ 0\ea\right]\ \mathrm{and} \ \mA(0) = \left[\ba{c} I_p \\ 0\ea\right], \nn
  \ee
  where $B \in \mbR^{n\times p}$, $F \in \mbR^{p \times p}$. Solving the above ordinary differential equations, we get that
  \be \label{equ:update:geodesic:frame1}
  \mA(\tau) = \exp\left(\tau\left[\begin{array}{cc} -X^{\msT}E &~B \\ W & ~F \end{array}\right]\right) \left[\ba{c} I_p \\ 0\ea\right].
  \ee
  Since $Z(\tau) \equiv I_p$, we know from \reff{equ:ztau2} and the definition of $\mA(\tau)$ that
  \be
  Y(\tau) = \big[\ba{l} X, \ I_p\ea \big]  \mA(\tau), \nn
  \ee
 and $\mA(\tau)^{\msT} \mA(\tau) = I_p$. It follows that the matrix inside the brackets of \reff{equ:update:geodesic:frame1} will be skew-symmetric. This means that $B = -W^{\msT}$,  $F + F^{\msT} = 0$ and $X^{\msT}E+E^{\msT}X=0$
  ({\it i.e.}, $E \in \mT_X$). Let $W = QR$ be the unique QR factorization of $W$ and assume that $F$ takes the form $F = Q\wF Q^{\msT}$, where $\wF \in \mbR^{p\times p}$ is any skew-symmetric matrix. Then we can write
  \begin{align}\label{equ:update:geodesic:frame2}
    Y(\tau)  ={} &\big[\ba{l} X, \ I_p\ea \big]  \exp\left( \tau\left[\begin{array}{cc} I_p &~0 \\ 0 & ~Q \end{array}\right] \left[\begin{array}{cc} -X^{\msT}E &~-R^{\msT} \\ R & ~\wF \end{array}\right]  \left[\begin{array}{cc} I_p &~0 \\ 0 & ~Q^{\msT} \end{array}\right] \right)\left[\ba{c} I_p \\ 0\ea\right]  \nn \\
      ={} & \big[\ba{l} X, \ Q \ea \big]  \exp\left(\tau\left[\begin{array}{cc} -X^{\msT}E &~-R^{\msT} \\ R & ~\wF \end{array}\right]\right)  \left[\ba{c} I_p \\ 0\ea\right],
\end{align}
where the second equality is due that $Q$ has orthonormal columns. The update scheme \reff{equ:update:geodesic:frame2} can be regarded as a generalized geodesic update scheme, since letting $\wF = 0$, \reff{equ:update:geodesic:frame2} reduces to the geodesic update scheme \reff{equ:update:geodesic}.

\subsection{Details of Approach II}\label{appendix:subsection:approach2}
 \revmini{Based on the choices that $\wR(\tau) = I_p + \tau \wR'(0)$ and  $\wN(\tau) = \tau I_p$, we can get $Z(\tau)$ from \reff{equ:ztau2} by the  polar decomposition or the Cholesky factorization.}

    If by the  polar decomposition, $Z(\tau)$ and $Z'(\tau)$ are always symmetric. In this case, it is easy to show that
 $Y(\tau; X) = \mP_{\stief}\rev{\big(X \wR(\tau) + W \wN(\tau) \big)}$, 
 which with \reff{equ:condition2}, \rrev{\reff{add3} and  \reff{equ:approach2:wR}} further implies  that
 \be \label{equ:generalpolar}
   Y(\tau; X) = \mP_{\stief}\big(X - \tau E -  \tau X Z'(0) \big),
 \ee
 where $Z'(0)$ is any $p$-by-$p$ symmetric matrix. If $Z'(0) = 0$, \reff{equ:generalpolar} reduces to the ordinary polar decomposition or Manton's projection update scheme. If $Z'(0) = \mbox{sym}(X^{\msT}G)$ and $E = G - X \mbox{sym}(X^{\msT}G),$ it becomes the ordinary gradient projection update scheme.

 If by the Cholesky factorization, $Z(\tau)$ and $Z'(\tau)$ are always upper triangular. Similarly, we can derive
  \be \label{equ:generalqr}
   Y(\tau; X) = \mbox{qr}\big(X - \tau E -  \tau X Z'(0)\big),
 \ee
 where $Z'(0)$ is any $p$-by-$p$ upper triangular matrix. When $Z'(0) = 0$, \reff{equ:generalqr} reduces to the ordinary QR factorization update scheme.

    \section{Proof of Lemma \ref{lemma:update}}\label{appendix:section:proof1}
  The fact that  $X^{\msT}\Drho$  is  skew-symmetric implies  that $J$ is  invertible, \rrev{thus $Y(\tau)$ is well-defined.} (i) follows from the construction of the update scheme \reff{equ:update:new}.

Meanwhile, we know that $Y'(0) = -\Drho$, \rev{which with the chain rule shows that
  \be
 \mF'_{\tau}(Y(0)) = -  \langle G, \Drho \rangle  = -\langle \nabla \mF, P_X \Drho \rangle, \nn
  \ee
  where the second  equality is due to $\Drho \in \mT_X$  and  the definition \reff{equ:partialStiefel} of $\nabla \mF$. \rrev{Recall that $P_X = I_n - \frac12 XX^{\msT}$.}  Substituting \reff{equ:Drho:def} into the above equation yields
  \be
  \mF'_{\tau}(Y(0))
    = -\langle \nF, (I_n + (\rho -1 )XX^{\msT})\nF \rangle
    \leq - \min\{\rho,1\} \|\nF\|_{\msF}^2. \nn
  \ee
}
So (ii) is true.

We prove (iii) by contradiction.  \rev{Multiplying $X^{\msT}$ from both sides the expression of $Y(\tau)$ in \reff{equ:update:new} and using $X^{\msT}W = 0$, we get that $2J^{-1} - I_p = X^{\msT}Y$.} Assume that there exists a $p$-by-$p$ orthogonal matrix $Q_p$ such that $Y = XQ_p$.  Then we have $2J^{-1} - I_p = Q_p$; {\it i.e.}, $2I_p - J = Q_pJ$. It follows from $(2I_p - J)^{\msT} (2I_p-J) = (Q_pJ)^{\msT}Q_pJ$ and  $Q_p^\msT Q_p = I_p$ that $$J^{\msT} + J = 4I_p,$$ which is a contradiction to the definition of $J$. The contradiction shows the truth of (iii).

For (iv), it is obvious that  $\|J\|_2^{} \leq 1 + \frac{\tau^2}{4}\|W\|_{\msF}^2 + \frac{\tau}{2}\|X^{\msT}\Drho\|_{\msF}^{}$, where $\|\cdot\|_2^{}$ is the spectral norm.
\rev{It follows from  $-\Drho = W - XX^{\msT} \Drho$ and $X^{\msT} W=0$ that  $\|\Drho\|_{\msF}^2 = \|W\|_{\msF}^2 + \|X^{\msT}\Drho\|_{\msF}^2.$ \rrev{Notice that $\upsilon = \tau \|\Drho\|_{\msF}^{}$.} Then we have
\begin{align}
 \|J\|_2  &\leq  1 + \frac{\upsilon^2}{4} \cdot \frac{\|W\|_{\msF}^2}{\|\Drho\|_{\msF}^2 } + \frac{\upsilon}{2}  \cdot \frac{\|X^{\msT} \Drho\|_{\msF}^{}}{\|\Drho\|_{\msF}^{}}
 \leq 1 + \max_{0 \leq t \leq 1}\left( \frac{\upsilon^2}{4} (1 - t^2) + \frac{\upsilon}{2} t\right)
\leq (5 + \upsilon^2)/4. \nn
\end{align}
\rev{On the other hand, the relation $2J^{-1} = X^{\msT} Y + I_p$ indicates that
$\|J^{-1}\|_2 \leq 1$. Thus (iv) is true.}
}

In the case that $p=1$ or $n$, it is not difficult to simplify the corresponding update schemes and we omit the details here. Thus, we complete the proof.

\section{Proof of Lemma \ref{lemma:lowrank}}\label{appendix:section:proof2}
    \rrev{The relation $\langle G,  D^{(i)} \rangle =   e_i^{\msT}\left(G^{\msT} \nabla \mF\right) e_i$  can be easily verified from the
   definition of $D^{(i)}$}.
    With the definition of $D^{(q)}$ and $X^{\msT} X = I_p$, we have that
    $$\rrev{X^{\msT} D^{(q)} = X^{\msT} G_{(q)}^{} e_q^{\msT} - e_q G_{(q)}^{\msT}X  = \frac{2}{\tau}\left(be_q^{\msT} - e_q b^{\msT}\right)\in \mT_X}.$$
\rrev{Thus $Y(\tau)$ is well-defined. Moreover, we have that}
    $$W = -(I_n - XX^{\msT}) D^{(q)} = -(I_n - XX^{\msT}) G_{(q)}^{}e_q^{\msT}$$
    Hence, \rev{the matrix $J = I_p + \frac{\tau^2}{4} W^{\msT}W + \frac{\tau}{2}X^{\msT} D^{(q)}$ can be expressed as}
\begin{align} \label{equ:lowranJ}
J =  I_p + \xi e_q^{}e_q^{\msT} + b e_q^{\msT} - e_q b^{\msT} =
\rrev{I_p + \big[\ba{cc} e_q,&\  b \ea \big] \left(\left[\ba{cc} \xi &\  -1 \\ 1  & \ 0\ea \right] \left[\ba{c}e_q^{\msT}\\[2pt]b^{\msT}\ea\right]\right)},
\end{align}
where $\xi = \frac{\tau^2}{4}G_{(q)}^{\msT}(I_n - XX^{\msT}) G_{(q)}$.  \rrev{By the formulations of $b$ and $\xi$, we easily see that}
$$\rrev{e_q^{\msT} b = 0,  \  \xi =\alpha\, - \, b^{\msT}b.}$$
\rrev{Applying the SMW formula to \reff{equ:lowranJ} and using the above relations, we can obtain \reff{equ:lowrank:inverseJ}.}

Moreover, notice that $\nabla \mF = \sum_{i=1}^{p} D^{(i)}$. Thus we have
$$p\, \mF'_{\tau}(Y(0)) = -p\, \langle G, D^{(q)} \rangle \leq - \langle G, \nabla \mF \rangle \leq -\frac 12\|\nabla \mF\|_{\msF}^2,  $$
\rev{where the first inequality follows from the choice of $q$ in \reff{equ:lowrank_q} and the second one is due to (ii) of Lemma \ref{lemma:update}.}
\rev{The proof is completed}. 

\section{Proof of Proposition \ref{proposition:update_equ}}\label{appendix:section:proof3}
\revmini{Before going into the details of the proof, we recall the following fact on the inverse of a $2 \times 2$ block matrix.}
\begin{fact}\label{fact:blockinverse}
  Assume that $T = \left[\begin{array}{cc} T_{11} & ~T_{12}\\ T_{21} &~T_{22} \end{array}  \right]$, where
    $T_{22}^{}$ and $T_{11}^{} - T_{12}^{} T_{22}^{-1}T_{21}^{}$ are invertible. Then $T$ is invertible and
    \be
    T^{-1} =\left[ \begin{array}{cc} ( T_{11}^{} - T_{12}^{}  T_{22}^{-1}T_{21}^{})^{-1} & - ( T_{11}^{} - T_{12}^{} T_{22}^{-1}T_{21}^{})^{-1} T_{12}^{}T_{22}^{-1}
	\\[2pt]
	-T_{22}^{-1}T_{21}^{} ( T_{11}^{} - T_{12}^{} T_{22}^{-1}T_{21}^{})^{-1} &~ ~ T_{22}^{-1}T_{21}^{} ( T_{11}^{} - T_{12}^{} T_{22}^{-1}T_{21}^{})^{-1} T_{12}^{}T_{22}^{-1} + T_{22}^{-1}\end{array} \right].
      \nn
    \ee
\end{fact}

   \rev{First, we show that the update scheme \reff{equ:update:wenyin} is well-defined, provided that $I_p+\frac{\tau}{4}X^{\msT}D$ is invertible.}  Consider the update scheme \reff{equ:update:wenyin} with $U=[P_X D, \,X]$ and $V=[X,\, -P_XD]$.
   \rev{It follows from $P_X = I_n - \frac{1}{2}XX^{\msT}$ that $X^{\msT}P_X D = \frac12 X^{\msT}D$.} \rev{Combining this with $X^{\msT}X = I_p$ and $X^{\msT}D$ being skew-symmetric}, we can rewrite
\be \label{equ:proposition:update_equ:000}
I_{2p}+\frac{\tau}{2}V^{\msT}U=\left[\begin{array}{cc}I_p+\frac{\tau}{4}X^{\msT}D & \frac{\tau}{2}I_p\\[2pt] -\frac{\tau}{2}D^{\msT}P_X^{\msT}P_X^{}D &~ I_p +\frac{\tau}{4}X^{\msT}D\end{array}\right].
  \ee
\rev{Moreover, we derive  that}
$$\rev{W^{\msT}W = D^{\msT}(I_n - XX^{\msT})D = D^{\msT}P_X^{\msT} P_X^{} D + \frac{1}{4} \left(X^{\msT}D\right)^2,}$$
\rev{which with \reff{equ:update:new} implies that}
   \be
   J = \Big(I_p + \frac{\tau}{4}X^{\msT} D \Big)^2 +  \frac{\tau^2}{4}D^{\msT}P_X^{\msT}P_X^{} D. \nn
   \ee
   \rev{Plugging this into \reff{equ:proposition:update_equ:000} yields}
\be \label{equ:proposition:update_equ:a0}  I_{2p}+\frac{\tau}{2}V^{\msT}U\coloneqq   \left[\begin{array}{cc}T_{11} & ~T_{12} \\ T_{21} &~ T_{22} \end{array}\right]=\left[\begin{array}{cc}I_p+\frac{\tau}{4}X^{\msT}D & \frac{\tau}{2}I_p\\[2pt]
 \frac{2}{\tau} \left( \left(I_p +\frac{\tau}{4}X^{\msT}D\right)^2 - J\right) &~ I_p +\frac{\tau}{4}X^{\msT}D\end{array}\right],
 \ee
  where  $T_{ij} \in \mbR^{p\times p}$ ($i,j \in \{1,2\}$). \rev{By simple calculations, we know that}
  $$\rev{T_{11}^{} - T_{12}^{} T_{22}^{-1} T_{21}^{} = \left(I_p + \frac{\tau}{4}X^{\msT}D\right)^{-1} J}.$$ Thus it follows from \revmini{Fact} \ref{fact:blockinverse}  that $I_{2p}+\frac{\tau}{2}V^{\msT}U$ is invertible and  the update scheme \reff{equ:update:wenyin} is well-defined.

\rev{We now prove the equivalence.} With Lemma \ref{fact:blockinverse} and \reff{equ:proposition:update_equ:a0}, \rev{some tedious manipulations yield} that  $(I_{2p}+\frac{\tau}{2}V^{\msT}U)^{-1}=\left[\begin{array}{cc}M_{11} & ~M_{12}\\M_{21} & ~M_{22}\end{array}\right]$, where
\be
\ba{ll}
  M_{11} = J^{-1}(I_p + \frac{\tau}{4}X^{\msT} D), & ~ M_{12} = -\frac{\tau}{2}J^{-1}, \\
  M_{21} = \frac{2}{\tau}\big(I_p- \rev{M_{22}}(I_p+\frac{\tau}{4}X^{\msT}D)\big), &~ M_{22}=(I_p+\frac{\tau}{4}X^{\msT}D)J^{-1}. \nn
\ea
\ee
Direct calculations show that
\be \label{equ:proposition:update_equ:a1}
M_{11}+ \frac{1}{2}M_{12}X^{\msT}D  = J^{-1}, \quad
\tau \Big(M_{21} +  \frac{1}{2} M_{22}X^{\msT}D\Big)  = 2I_p - 2\Big(I_p+ \frac{\tau}{4}X^{\msT}D\Big)J^{-1}.
\ee
\rev{Finally, we can obtain}
\begin{align}
  Y_{\mathrm{wy}}(\tau) ={} & X - \tau \left[\ba{l}P_XD,\  X\ea\right] \Bigg(\left[\begin{array}{cc}M_{11} & ~M_{12}\\M_{21} & ~M_{22}\end{array}\right] \left[\ba{c} I_p\\ \frac{1}{2}X^{\msT}D \ea\right]\Bigg) \nn \\
    ={} & X-\tau \Big(\frac12 XX^{\msT}D -W\Big)\Big(M_{11}+ \frac{1}{2}M_{12}X^{\msT}D\Big) -\tau X\Big(M_{21}+\frac{1}{2}M_{22}X^{\msT}D\Big)\nonumber \\
    ={}& (2X + \tau W)J^{-1}- X, \nn
\end{align}
\rev{where the first equality uses \reff{equ:update:wenyin} and $X^{\msT}X = I_p$, the second and third equalities are due to  $P_XD = \frac12 XX^{\msT}D -W$ and  \reff{equ:proposition:update_equ:a1}, respectively. The proof is completed.}
\section{Some details of Table \ref{table:cost}}\label{appendix:section:details}

 We first review the computational costs of some basic matrix operations.  Given $A \in \mbR^{n_1 \times n_2}$ and $B  \in \mbR^{n_2 \times n_2}$,   computing $AB$ needs $2n_1^{}n_2^2$ flops while computing $A^{\msT} A$ only needs $n_1^{}n_2^2$ flops. Computing $\mbox{qr}(A)$ by the modified Gram-Schmidt algorithm needs $2n_1^{}n_2^2$ flops (\rev{here $n_1\geq n_2$}). If $B$ is symmetric, computing the eigenvalue decomposition  $B = P\Sigma P^{\msT}$   with orthogonal $ P\in \mbR^{n_2 \times n_2}$ and diagonal $\Sigma \in  \mbR^{n_2 \times n_2}$ by the symmetric QR  algorithm needs $9n_2^3$ flops. If $B$ is skew-symmetric, computing the exponential of  $B$ needs $10n_2^3$ flops. If $B$ is symmetric positive definite, computing $B^{1/2}$ needs $10n_2^3$ flops. In addition, if $B$ is nonsingular, solving the matrix equation $B T = A^{\msT}$  by the Gauss elimination with partial pivoting needs $2n_1^{}n_2^2 +2n_2^3/3$  flops. It follows that computing $B^{-1}$ needs $8n_2^3/3$ flops. We refer interested readers to \cite{golub1996matrix} for more details.

 \revmini{To verify the computational costs for the corresponding update scheme in Table \ref{table:cost},} we  take the new update scheme \reff{equ:update:new} and the Wen-Yin update scheme \reff{equ:update:wenyin} as examples. Notice that comparing with $O(np^2)$ or $O(n^3)$,  the $O(p^2)$ term is omitted for  $1\leq p \leq n$.  \revmini{For simplicity, we only consider  the case that $1< p <n$. In the case \brev{that} $p=1$ or $n$, the cost for the update schemes  \reff{equ:update:new} and \reff{equ:update:wenyin} can be easily obtained by a similar analysis. We omit its details here. }

 To analyze the computational cost for \reff{equ:update:new}, since $E = \Drho$ and hence $W = -G + XX^{\msT}G$, we can rewrite the feasible curve as
$$Y(\tau; X) = \left(X\Big(\frac{2}{\tau}I_p +  X^{\msT}G\Big) - G\right) {\left(\frac{J(\tau)}{\tau}\right)^{-1}} - X, $$
and
$$J(\tau) = I_p + \frac{\tau^2}{4} (G^{\msT}G - G^{\msT}XX^{\msT}G) + \rrev{\rho\tau}(X^{\msT}G - G^{\msT}X).$$
Forming $J(\tau)$ needs $3np^2 + p^3$ flops involving  computing $X^{\msT}G$, $G^{\msT}G$ and $(G^{\msT}X )(X^{\msT}G )$.    The final assembly for $Y(\tau)$ consists of involving solving one matrix equation and performing one matrix multiplication and two matrix subtractions, and hence needs  $4np^2 + 2np + 2p^3/3$ flops. Therefore, the  total  cost of  forming $Y(\tau; X)$ in  \reff{equ:update:new} for any $\tau$ is $7np^2 + 2np + 5p^3/3$.  While doing backtracking line searches, we need to update $Y(\tau; X)$ for a different $\tau$. Denote the first trial and  the new trial stepsizes by $\tau_{\brev{\mathrm{first}}}$ and  $\tau_{\brev{\mathrm{new}}}$, respectively.  It is easy to see that
$$ Y(\tau_{\brev{\mathrm{new}}}; X) = \left( X\Big(\frac{2}{\tau_{\brev{\mathrm{first}}}}I_p + X^{\msT}G\Big) - G + \Big(\frac{2}{\tau_{\brev{\mathrm{new}}}} -\frac{2}{\tau_{\brev{\mathrm{first}}}}\Big)X\right)\brev{\left(\frac{J(\tau_{\brev{\mathrm{new}}})}{\tau_{\mathrm{new}}}\right)^{-1}} - X.$$
As $X\big(\frac{2}{\tau_{\brev{\mathrm{first}}}}I_p + X^{\msT}G\big) - G$ is already computed in $Y(\tau_{\brev{\mathrm{first}}}; X)$, we store this matrix and hence only need
$2np^2 + 3np + 2p^3/3$ to compute $Y(\tau_{\brev{\mathrm{new}}}; X).$

To analyze the computational cost for the Wen-Yin update scheme \reff{equ:update:wenyin}, we see that it takes  $3np^2$  and $2np^2 + np$ flops to form $I_{2p} + \frac{\tau}{2} V^{\msT} U$ and  $P_X \Drho$, respectively. The final assembly for $Y(\tau)$ needs $4np^2 + np + 40p^3/3$ flops. Hence, the total cost for the Wen-Yin update scheme is $9np^2 + 2np + 40p^3/3 $. When  $\rho = 1/2,$ we see from \brev{\cite{wen2013feasible}} that  $U = [G, X]$, $V = [X, -G]$ in \reff{equ:update:wenyin} which implies that there is no need to compute $P_XD_{1/2}$ any more, and the total cost for forming the  Wen-Yin update scheme can be reduced to $7np^2 + np + 40p^3/3$.
The cost for updating $Y(\tau)$ with a new $\tau$ is $4np^2 + np + 40p^3/3$ since $V^{\msT}U$ can be stored and the main work \brev{involves} solving a matrix equation and the  final assembly.

\end{appendices}

\bibliography{StiefelManifoldOptAFBB}

\begin{thebibliography}{10}
\providecommand{\url}[1]{{#1}}
\providecommand{\urlprefix}{URL }
\expandafter\ifx\csname urlstyle\endcsname\relax
  \providecommand{\doi}[1]{DOI~\discretionary{}{}{}#1}\else
  \providecommand{\doi}{DOI~\discretionary{}{}{}\begingroup
  \urlstyle{rm}\Url}\fi

\bibitem{abrudan2008steepest}
Abrudan, T., Eriksson, J., Koivunen, V.: Steepest descent algorithms for
  optimization under unitary matrix constraint.
\newblock IEEE Transactions on Signal Processing \textbf{56}(3), 1134--1147
  (2008)

\bibitem{abrudan2009conjugate}
Abrudan, T., Eriksson, J., Koivunen, V.: Conjugate gradient algorithm for
  optimization under unitary matrix constraint.
\newblock Signal Processing \textbf{89}(9), 1704--1714 (2009)

\bibitem{absil2008optimization}
Absil, P.A., Mahony, R., Sepulchre, R.: Optimization algorithms on matrix
  manifolds.
\newblock Princeton University Press (2008)

\bibitem{absil2012projection}
Absil, P.A., Malick, J.: Projection-like retractions on matrix manifolds.
\newblock SIAM Journal on Optimization \textbf{22}(1), 135--158 (2012)

\bibitem{andreani2007augmented}
Andreani, R., Birgin, E.G., Mart{\'\i}nez, J.M., Schuverdt, M.L.: On augmented
  {L}agrangian methods with general lower-level constraints.
\newblock SIAM Journal on Optimization \textbf{18}(4), 1286--1309 (2007)

\bibitem{Balogh2004some}
Balogh, J., Csendes, T., Rapcs{\'a}k, T.: Some global optimization problems on
  {S}tiefel manifolds.
\newblock Journal of Global Optimization \textbf{30}(1), 91--101 (2004)

\bibitem{barzilai1988two}
Barzilai, J., Borwein, J.M.: Two-point step size gradient methods.
\newblock IMA Journal of Numerical Analysis \textbf{8}(1), 141--148 (1988)

\bibitem{birgin2000nonmonotone}
Birgin, E.G., Mart{\'\i}nez, J.M., Raydan, M.: Nonmonotone spectral projected
  gradient methods on convex sets.
\newblock SIAM Journal on Optimization \textbf{10}(4), 1196--1211 (2000)

\bibitem{bolla1998extrema}
Bolla, M., Michaletzky, G., Tusn{\'a}dy, G., Ziermann, M.: Extrema of sums of
  heterogeneous quadratic forms.
\newblock Linear Algebra and its Applications \textbf{269}(1), 331--365 (1998)

\bibitem{borsdorf2012algorithm}
Borsdorf, R.: An algorithm for finding the optimal embedding of a symmetric
  matrix into the set of diagonal matrices.
\newblock Tech. rep., University of Manchester (2012)

\bibitem{dai2005projected}
Dai, Y.H., Fletcher, R.: Projected {B}arzilai--{B}orwein methods for
  large-scale box-constrained quadratic programming.
\newblock Numerische Mathematik \textbf{100}(1), 21--47 (2005)

\bibitem{dai2006new}
Dai, Y.H., Fletcher, R.: New algorithms for singly linearly constrained
  quadratic programs subject to lower and upper bounds.
\newblock Mathematical Programming \textbf{106}(3), 403--421 (2006)

\bibitem{dai2006cyclic}
Dai, Y.H., Hager, W.W., Schittkowski, K., Zhang, H.C.: {The cyclic
  Barzilai--Borwein method for unconstrained optimization}.
\newblock IMA Journal of Numerical Analysis \textbf{26}(3), 604--627 (2006)

\bibitem{dai2002convergence}
Dai, Y.H., Liao, L.Z.: R-linear convergence of the {B}arzilai and {B}orwein
  gradient method.
\newblock IMA Journal of Numerical Analysis \textbf{22}, 1--10 (2002)

\bibitem{dai2001adaptive}
Dai, Y.H., Zhang, H.C.: Adaptive two-point stepsize gradient algorithm.
\newblock Numerical Algorithms \textbf{27}(4), 377--385 (2001)

\bibitem{aspremont2007direct}
{d}'Aspremont, A., Ei~Ghaoui, L., Jordan, M.I., Lanckriet, G.R.: A direct
  formulation for sparse {PCA} using semidefinite programming.
\newblock SIAM Review \textbf{49}(3), 434--448 (2007)

\bibitem{davis2009university}
Davis, T., Hu, Y.F.: University of {F}lorida sparse matrix collection.
\newblock Tech. rep., University of Florida (2009)

\bibitem{edelman1998geometry}
Edelman, A., Arias, T.A., Smith, S.T.: The geometry of algorithms with
  orthogonality constraints.
\newblock SIAM Journal on Matrix Analysis and Applications \textbf{20}(2),
  303--353 (1998)

\bibitem{elden1999procrustes}
Eld{\'e}n, L., Park, H.: A {P}rocrustes problem on the {S}tiefel manifold.
\newblock Numerische Mathematik \textbf{82}(4), 599--619 (1999)

\bibitem{fletcher2005barzilai}
Fletcher, R.: On the {B}arzilai--{B}orwein method.
\newblock In: L.Q. Qi, K.~Teo, X.Q. Yang (eds.) Optimization and Control with
  Applications, \emph{Applied Optimization}, vol.~96, pp. 235--256. Springer US
  (2005)

\bibitem{flury1988common}
Flury, B.: Common principal components $\&$ related multivariate models.
\newblock John Wiley \& Sons, Inc., New York, NY, USA (1988)

\bibitem{gao2010majorized}
Gao, Y., Sun, D.F.: A majorized penalty approach for calibrating rank
  constrained correlation matrix problems.
\newblock Tech. rep., National University of Signapore (2010)

\bibitem{golub1996matrix}
Golub, G.H., Van~Loan, C.F.: Matrix computations, 3 edn.
\newblock Johns Hopkins Univerisity Press (1996)

\bibitem{grippo1986nonmonotone}
Grippo, L., Lampariello, F., Lucidi, S.: A nonmonotone line search technique
  for {N}ewton's method.
\newblock SIAM Journal on Numerical Analysis \textbf{23}, 707--716 (1986)

\bibitem{grubisic2007efficient}
Grubi{\v{s}}i{\'c}, I., Pietersz, R.: Efficient rank reduction of correlation
  matrices.
\newblock Linear Algebra and its Applications \textbf{422}(2), 629--653 (2007)

\bibitem{jiang2012framework}
Jiang, B., Dai, Y.H.: A framework of constraint preserving update schemes for
  optimization on {S}tiefel manifold.
\newblock Tech. rep., Institue of Computational Mathematics and
  Scientific/Engineering Computing, Academy of Mathematics and Systems
  Sciences, Chinese Academy of Sicences (2012)

\bibitem{Daijiang2012}
Jiang, B., Dai, Y.H.: Feasible {B}arzilai-{B}orwein-like methods for extreme
  symmetric eigenvalue problems.
\newblock Optimization Methods and Software \textbf{28}(4), 756--784 (2013)

\bibitem{jiang2012inexact}
Jiang, K.F., Sun, D.F., Toh, K.C.: An inexact accelerated proximal gradient
  method for large scale linearly constrained convex {SDP}.
\newblock SIAM Journal on Optimization \textbf{22}(3), 1042--1064 (2012)

\bibitem{joho2002joint}
Joho, M., Mathis, H.: Joint diagonalization of correlation matrices by using
  gradient methods with application to blind signal separation.
\newblock In: Sensor Array and Multichannel Signal Processing Workshop
  Proceedings, 2002, pp. 273--277. IEEE (2002)

\bibitem{journee2010generalized}
Journ\'ee, M., Nesterov, Y., Richt{\'a}rik, P., Sepulchre, R.: Generalized
  power method for sparse principal component analysis.
\newblock The Journal of Machine Learning Research \textbf{11}, 517--553 (2010)

\bibitem{lai2014folding}
Lai, R., Wen, Z., Yin, W., Gu, X., Lui, L.M.: Folding-free global conformal
  mapping for genus-0 surfaces by harmonic energy minimization.
\newblock Journal of Scientific Computing \textbf{58}(3), 705--725 (2014)

\bibitem{li2010inexact}
Li, L., Toh, K.C.: An inexact interior point method for {L}1-regularized sparse
  covariance selection.
\newblock Mathematical Programming Computation \textbf{2}, 291--315 (2010)

\bibitem{li2011sequential}
Li, Q.N., Qi, H.D.: A sequential semismooth {N}ewton method for the nearest
  low-rank correlation matrix problem.
\newblock SIAM Journal on Optimization \textbf{21}(4), 1--26 (2011)

\bibitem{liu2011complexity}
Liu, Y.F., Dai, Y.H., Luo, Z.Q.: On the complexity of leakage interference
  minimization for interference alignment.
\newblock In: 2011 IEEE 12th International Workshop on Signal Processing
  Advances in Wireless Communications, pp. 471--475 (2011)

\bibitem{manton2002optimization}
Manton, J.H.: Optimization algorithms exploiting unitary constraints.
\newblock IEEE Transaction on Signal Processing \textbf{50}(3), 635--650 (2002)

\bibitem{nishimori2005learning}
Nishimori, Y., Akaho, S.: Learning algorithms utilizing quasi-geodesic flows on
  the {S}tiefel manifold.
\newblock Neurocomputing \textbf{67}, 106--135 (2005)

\bibitem{nocedal2006numerical}
Nocedal, J., Wright, S.J.: Numerical Optimization, 2 edn.
\newblock Springer Series in Operations Research and Financial Engineering.
  Springer, New York (2006)

\bibitem{peters2009interference}
Peters, S.W., Heath, R.W.: Interference alignment via alternating minimization.
\newblock In: Proceedings of the 2009 IEEE International Conference on
  Acoustics, Speech and Signal Processing, pp. 2445--2448. IEEE Computer
  Society (2009)

\bibitem{pietersz2004rank}
Pietersz, R., Groenen, P.J.: Rank reduction of correlation matrices by
  majorization.
\newblock Quantitative Finance \textbf{4}(6), 649--662 (2004)

\bibitem{qi2007quadratically}
Qi, H.D., Sun, D.F.: A quadratically convergent {N}ewton method for computing
  the nearest correlation matrix.
\newblock SIAM Journal on Matrix Analysis and Applications \textbf{28}(2),
  360--385 (2007)

\bibitem{rapcsak2001minimization}
Rapcs\'ak, T.: On minimization of sums of heterogeneous quadratic functions on
  {S}tiefel manifolds.
\newblock In: From Local to Global Optimization, vol.~53, pp. 277--290. Kluwer
  Academic Publishers (2001)

\bibitem{rapcsak2002minimization}
Rapcs{\'a}k, T.: On minimization on {S}tiefel manifolds.
\newblock European Journal of Operational Research \textbf{143}(2), 365--376
  (2002)

\bibitem{raydan1993barzilai}
Raydan, M.: On the {B}arzilai and {B}orwein choice of steplength for the
  gradient method.
\newblock IMA Journal of Numerical Analysis \textbf{13}, 321--326 (1993)

\bibitem{raydan1997barzilai}
Raydan, M.: The {B}arzilai and {B}orwein gradient method for the large scale
  unconstrained minimization problem.
\newblock SIAM Journal on Optimization \textbf{7}(1), 26--33 (1997)

\bibitem{Rebonato1999most}
Rebonato, R., J{\"a}ckel, P.: The most general methodology to creating a valid
  correlation matrix for risk management and option pricing purposes.
\newblock Journal of Risk \textbf{2}, 17--27 (1999)

\bibitem{saad1992numerical}
Saad, Y.: Numerical methods for large eigenvalue problems.
\newblock Manchester University Press (1992)

\bibitem{savas2010quasi}
Savas, B., Lim, L.H.: Quasi-{N}ewton methods on {G}rassmannians and multilinear
  approximations of tensors.
\newblock SIAM Journal on Scientific Computing \textbf{32}(6), 3352--3393
  (2010)

\bibitem{schonemann1966generalized}
Sch{\"o}nemann, P.H.: A generalized solution of the orthogonal {P}rocrustes
  problem.
\newblock Psychometrika \textbf{31}(1), 1--10 (1966)

\bibitem{stiefel1935richtungsfelder}
Stiefel, E.: Richtungsfelder und fernparallelismus in n-dimensionalen
  mannigfaltigkeiten.
\newblock Commentarii Mathematici Helvetici \textbf{8}(1), 305--353 (1935)

\bibitem{sunyuan2006optimization}
Sun, W.Y., Yuan, Y.X.: Optimization theory and methods, \emph{Springer
  Optimization and Its Applications}, vol.~1.
\newblock Springer, New York (2006)

\bibitem{theis2009soft}
Theis, F., Cason, T., Absil, P.A.: Soft dimension reduction for ica by joint
  diagonalization on the stiefel manifold.
\newblock In: T.~Adali, C.~Jutten, J.M.T. Romano, A.~Barros (eds.) Independent
  Component Analysis and Signal Separation, \emph{Lecture Notes in Computer
  Science}, vol. 5441, pp. 354--361. Springer Berlin Heidelberg (2009)

\bibitem{toint1997non}
Toint, P.L.: Non-monotone trust-region algorithms for nonlinear optimization
  subject to convex constraints.
\newblock Mathematical Programming \textbf{77}(3), 69--94 (1997)

\bibitem{wen2013feasible}
Wen, Z., Yin, W.: A feasible method for optimization with orthogonality
  constraints.
\newblock Mathematical Programming \textbf{142}(1-2), 397--434 (2013)

\bibitem{yang2009kssolv}
Yang, C., Meza, J.C., Lee, B., Wang, L.W.: {KSSOLV} -- a {M}atlab toolbox for
  solving the {K}ohn-{S}ham equations.
\newblock ACM Transactions on Mathematical Software \textbf{36}, 1--35 (2009)

\bibitem{yang2006constrained}
Yang, C., Meza, J.C., Wang, L.W.: A constrained optimization algorithm for
  total energy minimization in electronic structure calculations.
\newblock Journal of Computational Physics \textbf{217}(2), 709--721 (2006)

\bibitem{yang2007trust}
Yang, C., Meza, J.C., Wang, L.W.: A trust region direct constrained
  minimization algorithm for the {K}ohn-{S}ham equation.
\newblock SIAM Journal on Scientific Computing \textbf{29}(5), 1854--1875
  (2007)

\bibitem{zhang2004nonmonotone}
Zhang, H.C., Hager, W.W.: A nonmonotone line search technique and its
  application to unconstrained optimization.
\newblock SIAM Journal on Optimization \textbf{14}(4), 1043--1056 (2004)

\bibitem{zhang2012alternating}
Zhang, L.H., Liao, L.Z.: An alternating variable method for the maximal
  correlation problem.
\newblock Journal of Global Optimization \textbf{54}(1), 199--218 (2012)

\bibitem{zhou2006gradient}
Zhou, B., Gao, L., Dai, Y.H.: Gradient methods with adaptive step-sizes.
\newblock Computational Optimization and Applications \textbf{35}(1), 69--86
  (2006)

\bibitem{zou2006sparse}
Zou, H., Hastie, T., Tibshirani, R.: Sparse principal component analysis.
\newblock Journal of Computational and Graphical Statistics \textbf{15}(2),
  265--286 (2006)

\end{thebibliography}
\bibliographystyle{spmpsci}
}
\end{document}